\newtheorem{remark}{Remark}[section]
\newtheorem{proposition}{Proposition}[section]
\newtheorem{corollary}{Corollary}[section]
\newtheorem{theorem}{Theorem}[section]
\theoremstyle{remark}
\newcommand{\rank}{\text{rank}}
\newcommand{\FSe}{\text{FS}_\varepsilon}
\newcommand{\FS}{\text{FS}}
\newcommand{\FSek}{\text{FS}_{\varepsilon_{k}}}
\newcommand{\RFSe}{\text{R-FS}_{\varepsilon, \delta}}
\newcommand{\PATHe}{\text{PATH-R-FS}_{\varepsilon}}
\newcommand{\ebs}{\textsc{LS-Boost}$(\varepsilon)$\,}
\newcommand{\lasso}{\textsc{Lasso }}
\newcommand{\blasso}{\textsc{BLasso }}
\newcommand{\blassoperiod}{\textsc{BLasso}}
\newcommand{\lassoperiod}{\textsc{Lasso}}
\newcommand{\by}{\mathbf{y}}
\newcommand{\bX}{\mathbf{X}}
\newcommand{\sgn}{\text{sgn}}
\newcommand{\bbR}{\mathbb{R}}
\newcommand{\bbE}{\mathbb{E}}
\newcommand{\LAR}{{\textsc{Lar}} }
\newcommand{\pmin}{\mathrm{pmin}}
\newcommand{\M}{\mathbf}
\newcommand{\eps}{\varepsilon}
\newcommand{\bei}{\begin{itemize}}
\newcommand{\eei}{\end{itemize}}
\newcommand{\bee}{\begin{enumerate}}
\newcommand{\eee}{\end{enumerate}}
\newcommand{\x}{x}
\newcommand{\y}{y}
\newenvironment{myarray}[2][1]
  {\array{#2}}
  {\endarray}
\DeclareMathOperator*{\argmin}{arg\,min}
\DeclareMathOperator*{\argmax}{arg\,max}
\DeclareMathOperator*{\mini}{minimize}
\title{A New Perspective on Boosting in Linear Regression via Subgradient Optimization and Relatives}
\author{Robert M. Freund\thanks{MIT Sloan School of Management, 77 Massachusetts Avenue, Cambridge, MA   02139
({mailto:  rfreund@mit.edu}).  This author's research is supported by AFOSR Grant No. FA9550-11-1-0141 and the MIT-Chile-Pontificia Universidad Católica de Chile Seed Fund.}
\and Paul Grigas\thanks{MIT Operations Research Center, 77 Massachusetts Avenue, Cambridge, MA   02139
({mailto:  pgrigas@mit.edu}).  This author's research has been partially supported through an NSF Graduate Research Fellowship and the MIT-Chile-Pontificia Universidad Católica de Chile Seed Fund.}
\and Rahul Mazumder\thanks{Department of Statistics, Columbia University, New York, NY 10027. The author's research has been funded by Columbia University's startup fund and a grant from the Betty Moore-Sloan foundation.
({mailto:  rm3184@columbia.edu}).}}
\begin{document}
\maketitle

\begin{abstract}

In this paper we analyze boosting algorithms~\cite{Friedman99,FHT00,LARS} in linear regression from a new perspective:  that of modern first-order methods in convex optimization.  We show that classic boosting algorithms in linear regression, namely the incremental forward stagewise algorithm ($\FSe$) and least squares boosting (\textsc{LS-Boost}$(\varepsilon)$), can be viewed as subgradient descent to minimize the loss function defined as the maximum absolute correlation between the features and residuals.  We also propose a modification
of $\FSe$ that yields an algorithm for the \lassoperiod, and that may be easily extended to an algorithm that computes the \lasso path for different values of the regularization parameter.  Furthermore, we show that these new algorithms for the \lasso may also be interpreted as the same master algorithm (subgradient descent), applied to a regularized version of the maximum absolute correlation loss function. We derive novel, comprehensive computational guarantees for several boosting algorithms in linear regression (including \ebs and $\FSe$) by using techniques of modern first-order methods in convex optimization. Our computational guarantees inform us about the statistical properties of boosting algorithms. In particular they provide, for the first time, a precise theoretical description of the amount of data-fidelity and regularization imparted by running a boosting algorithm with a prespecified learning rate for a fixed but arbitrary number of iterations, for \emph{any} dataset.

\end{abstract}

\section{Introduction}\label{sect_intro}

Boosting~\cite{Schapire90,freund1995boosting,FHT00,schapire2012boosting,ESLBook} is an extremely successful and popular supervised learning method that combines multiple weak\footnote{this term originates in the context of boosting for classification, where a ``weak'' classifier is slightly better than random guessing.}
 learners into a powerful ``committee.''
AdaBoost~\cite{FS96,schapire2012boosting,ESLBook} is one of the earliest boosting algorithms developed in the context of classification.
\cite{Br97,Br98} observed that AdaBoost may be viewed as an optimization algorithm, particularly as a form of gradient descent in a certain function space.
In an influential paper, \cite{FHT00} nicely interpreted boosting methods used in classification problems, and in particular AdaBoost, as instances of stagewise additive modeling~\cite{hastie1990generalized} -- a fundamental modeling tool in statistics.
This connection yielded crucial insight about the statistical model underlying boosting and provided a simple statistical explanation behind the success of boosting methods.
\cite{Friedman99} provided an interesting unified view of stagewise additive modeling and steepest descent minimization methods in function space to explain boosting methods.
This viewpoint was nicely adapted to various loss functions via a greedy function approximation scheme.
For related perspectives from the machine learning community, the interested reader is referred to the works~\cite{Mason00:_boost,ratsch2001soft} and the references therein.
\paragraph{Boosting and Implicit Regularization}
An important instantiation of boosting, and the topic of the present paper, is its application in linear regression. We use the usual notation with model matrix $\M{X} = [ \M{X}_{1}, \ldots, \M{X}_{p}] \in \mathbb{R}^{n \times p}$, response vector $\by \in \mathbb{R}^{n \times 1}$, and regression coefficients $\beta \in \mathbb{R}^p$.  We assume herein
that the features $\M{X}_{i}$ have been centered to have zero mean and unit $\ell_{2}$ norm, i.e., $\|\bX_i\|_2 = 1$ for $i=1, \ldots, p$, and $\M{y}$ is also centered to have zero mean.
For a regression coefficient vector $\beta$, the predicted value of the response is given by $\bX\beta$ and $ r = \by - \bX\beta$ denotes the residuals.

\paragraph{Least Squares Boosting -- \ebs} Boosting, when applied in the context of linear regression leads to models with attractive statistical properties~\cite{Friedman99,ESLBook,buehlmann2006boosting,buehlmann08:_boost_algor}.
We begin our study by describing one of the most popular boosting algorithms for linear regression:~\textsc{LS-Boost}$(\varepsilon)$ proposed in~\cite{Friedman99}:

\begin{center}
{\bf{Algorithm:}} Least Squares Boosting -- \textsc{LS-Boost}$(\varepsilon)$
\end{center}

Fix the learning rate $\varepsilon>0$ and the number of iterations $M$.

Initialize at $\hat r^0 = \by$, $\hat{\beta}^0 = 0$, $k = 0$ .

\bee
\item[\bf 1.] For $0 \leq k \leq M$ do the following:
\item[\bf 2.] Find the covariate $j_{k}$ and $\tilde{u}_{j_k}$ as follows:
$$\tilde{u}_{m}  =  \argmin_{u \in \mathbb{R}} \; \left( \sum_{i=1}^{n} ( \hat r^{k}_{i} - x_{im} u)^2  \right) \text{ for } m = 1,\ldots, p, \;\;\;\;\;  j_{k} \in \argmin_{ 1 \leq m \leq p } \; \sum_{i=1}^{n} ( \hat{r}^k_{i} - x_{im} \tilde{u}_{m})^2 \ . $$
\item[\bf 3.] Update the current residuals and regression coefficients as:
\begin{description}
\item $\hat{r}^{k+1} \gets \hat{r}^{k} - \varepsilon \M{X}_{j_k} \tilde{u}_{j_k}$
\item $\hat{\beta}^{k+1}_{j_k} \gets \hat{\beta}^{k}_{j_k} + \varepsilon \tilde{u}_{j_k}$ and
$\hat{\beta}^{k+1}_j \gets \hat{\beta}^{k}_j \ , j \neq j_k$ .
\end{description}
\eee
A special instance of the \textsc{LS-Boost}$(\varepsilon)$ algorithm with $\varepsilon=1$ is known as \textsc{LS-Boost}\cite{Friedman99} or Forward Stagewise~\cite{ESLBook} --- it is essentially
a method of repeated simple least squares fitting of the residuals~\cite{buehlmann08:_boost_algor}.
 The \textsc{LS-Boost} algorithm starts from the null model with residuals $\hat r^0 = \M{y}$.
At the $k$-th iteration, the algorithm finds a covariate $j_{k}$ which results in the maximal decrease in the univariate regression fit to the current residuals.  Let $\M{X}_{j_{k}}\tilde{u}_{j_k}$ denote the \emph{best} univariate fit for the current residuals, corresponding to the covariate $j_{k}$.
The residuals are then
updated as  $\hat{r}^{k+1} \gets \hat r^k - \M{X}_{j_{k}}\tilde{u}_{j_k}$ and the ${j_k}$-th regression coefficient is updated as
$\hat{\beta}^{k+1}_{j_k} \gets \hat{\beta}^{k}_{j_k} +  \tilde{u}_{j_k}$, with all other regression coefficients unchanged.
We refer the reader to Figure~\ref{fig:fse-ebs-loss-1}, depicting the evolution of the algorithmic properties of the \ebs algorithm as a function of $k$ and $\varepsilon$.
\ebs has old roots --- as noted by~\cite{buehlmann08:_boost_algor}, \textsc{LS-Boost} with $M=2$ is known as ``twicing,'' a method proposed by Tukey~\cite{tukey1976exploratory}.

\textsc{LS-Boost}$(\varepsilon)$ is a slow-learning variant of \textsc{LS-Boost}, where to counterbalance the greedy selection strategy of the \emph{best} univariate fit to the current residuals, the updates are
shrunk by an additional factor of $\varepsilon$, as described in Step~3 in Algorithm \textsc{LS-Boost}$(\varepsilon)$.
This additional  shrinkage factor $\varepsilon$ is also known as the learning rate.  Qualitatively speaking, a small value of $\varepsilon$ (for example, $\varepsilon = 0.001$) slows down the learning rate as compared to the 
choice $\varepsilon = 1$.
As the number of iterations increases, the training error decreases until one eventually attains a least squares fit.  For a small value of $\varepsilon$, the number of iterations required to reach a certain training error increases. However, with a small value of $\varepsilon$ it is possible
to explore a larger class of models, with varying degrees of shrinkage. It has been observed empirically that this often leads to models with better predictive power~\cite{Friedman99}.
In short, both $M$ (the number of boosting iterations) and $\varepsilon$ together control
the training error and the amount of shrinkage.  Up until now, as pointed out by~\cite{ESLBook}, the understanding of this tradeoff has been rather qualitative.
One of the contributions of this paper is a precise quantification of this tradeoff, which we do in Section~\ref{LSBsection}.

The papers \cite{buhlmann2003boosting,buehlmann2006boosting,buehlmann08:_boost_algor} present very interesting perspectives on \textsc{LS-Boost}$(\varepsilon)$,
where they refer to the algorithm as $L2$-\textsc{Boost}.  \cite{buehlmann08:_boost_algor} also obtains approximate expressions for the effective degrees of freedom  of the
$L2$-\textsc{Boost} algorithm.    In the non-stochastic setting, this is known as Matching Pursuit~\cite{mallat1993matching}.  \ebs ~is also closely related to Friedman's MART algorithm~\cite{Friedman00greedyfunction}.

\paragraph{Incremental Forward Stagewise Regression -- $\FSe$}  A close cousin of the \textsc{LS-Boost}$(\varepsilon)$ algorithm is the Incremental Forward Stagewise algorithm~\cite{ESLBook,LARS} presented below, which we refer to as $\FSe$.

\begin{center}
{\bf  Algorithm:} Incremental Forward Stagewise Regression -- $\FSe$
\end{center}

Fix the learning rate $\varepsilon > 0$ and number of iterations $M$.

Initialize at $\hat r^0 = \by$, $\hat{\beta}^0 = 0$, $k = 0$ .

\bee
\item[\bf 1.] For $0 \leq k \leq M$ do the following:

\item[\bf 2.] Compute:
$j_k \in \argmax\limits_{j \in \{1, \ldots, p\}} |(\hat{r}^k)^T\bX_j|$

\item[\bf 3.]
\begin{description}
\item $\hat r^{k+1} \gets \hat{r}^k - \varepsilon \ \sgn((\hat{r}^k)^T\bX_{j_k})\bX_{j_k}$
\item $\hat{\beta}^{k+1}_{j_k} \gets \hat{\beta}^{k}_{j_k} + \varepsilon\ \sgn((\hat{r}^k)^T\bX_{j_k})$ and $\hat{\beta}^{k+1}_j \gets \hat{\beta}^{k}_j \ , j \neq j_k$ .
\end{description}
\eee
In this algorithm, at the $k$-th iteration we choose a covariate $\M{X}_{j_k}$ that is the most correlated (in absolute value) with the current
residual and update the $j_{k}$-th regression coefficient, along with the residuals, with a shrinkage factor $\varepsilon$.
As in the \ebs~algorithm, the choice of $\varepsilon$ plays a crucial role in the statistical behavior of the $\FSe$ algorithm.  A large choice of $\varepsilon$ usually means an aggressive strategy; a smaller value corresponds to a slower learning procedure.
Both the parameters $\varepsilon$ and the number of iterations $M$ control the data fidelity and shrinkage in a fashion qualitatively similar to \ebs.
We refer the reader to Figure~\ref{fig:fse-ebs-loss-1}, depicting the evolution of the algorithmic properties of the $\FSe$ algorithm as a function of $k$ and $\varepsilon$.
In Section \ref{sect_subgrad} herein, we will present for the first time precise descriptions of how the quantities $\varepsilon$ and $M$ control the amount of training error and regularization in $\FSe$, which will consequently inform us about their tradeoffs.

Note that \ebs\  and $\FSe$ have a lot of similarities but contain subtle differences too, as we will characterize in this paper. Firstly, since all of the covariates are standardized to have unit $\ell_{2}$ norm, for same given residual value $\hat r^k$ it is simple to derive that
Step (2.) of \ebs and $\FSe$ lead to the same choice of $j_{k}$. However, they are not the same algorithm and their differences are rather plain to see from their residual updates, i.e., Step (3.).
In particular, the amount of change in the successive residuals differs across the algorithms:
\begin{equation}\label{ebs-fse-together-2}
\begin{myarray}[1.5]{rl}
\text{\ebs}: & \| \hat r^{k+1} - \hat{r}^k\|_{2} =   \varepsilon |(\hat{r}^k)^T\bX_{j_k} | =  \varepsilon \cdot n\cdot \|\nabla L_n(\hat \beta^k)\|_\infty \  \\
\FSe:  &  \| \hat r^{k+1} - \hat{r}^k\|_{2} = \varepsilon |s_{k}| ~~~ \text{where}~~s_{k} = \sgn((\hat{r}^k)^T\bX_{j_k}) \ ,
\end{myarray}
\end{equation}
where $\nabla L_n(\cdot)$ is the gradient of the least squares loss function $L_n(\beta) := \tfrac{1}{2n}\|\by - \bX\beta \|_2^2$.
Note that for both of the algorithms, the quantity $\| \hat r^{k+1} - \hat{r}^k\|_{2}$ involves the shrinkage factor $\varepsilon$. Their difference thus lies in
the multiplicative factor, which is $ n\cdot \|\nabla L_n(\hat \beta^k)\|_\infty$ for \ebs and is
$|\sgn((\hat{r}^k)^T\bX_{j_k})|$
for $\FSe$. The norm of the successive residual differences for \ebs is proportional to the $\ell_\infty$ norm of the gradient of the least squares loss function (see herein equations \eqref{grad1} and \eqref{grad_norm}).
For $\FSe$, the norm of the successive residual differences depends on the absolute value of the sign of the $j_{k}$-th coordinate of the gradient.  Note that $s_{k} \in \{ - 1, 0, 1\}$ depending upon whether $(\hat{r}^k)^T\bX_{j_k}$ is negative, zero, or positive; and $s_{k}=0$ only when $(\hat{r}^k)^T\bX_{j_k}=0$, i.e., only when $\|\nabla L_n(\hat \beta^k)\|_\infty = 0$ and hence $\hat \beta^k$ is a least squares solution.  Thus, for $\FSe$ the $\ell_{2}$ norm of the difference in residuals is almost always $\varepsilon$ during the course of the algorithm.
For the \ebs algorithm, progress is considerably more sensitive to the norm of the gradient --- as the algorithm makes its way to the unregularized least squares
fit, one should expect the norm of the gradient to also shrink to zero, and indeed we will prove this in precise terms in Section \ref{LSBsection}.
Qualitatively speaking, this means that the updates of \ebs are more well-behaved when compared to the
updates of $\FSe$, which are more erratically behaved.  Of course, the additional shrinkage factor $\varepsilon$ further dampens the progress for both algorithms.

Our results in Section~\ref{LSBsection} show that the predicted values $\M{X} \hat{\beta}^k$ obtained from \ebs\ converge (at a globally linear rate) to the least squares fit as $k \rightarrow \infty$, this holding true for any value of $\varepsilon \in (0, 1]$.  On the other hand, for $\FSe$ with $\varepsilon >0$, the iterates $\M{X} \hat{\beta}^k$  need not necessarily converge to the least squares
fit as $k \rightarrow \infty$.  Indeed, the $\FSe$ algorithm, by its operational definition, has a uniform learning rate $\varepsilon$ which remains fixed for all iterations; this makes it impossible to always guarantee convergence to a least squares solution with accuracy less than $O(\varepsilon)$.
While the predicted values of \ebs converge to a least squares solution at a linear rate, we show in Section
\ref{sect_subgrad} that the predictions from the $\FSe$ algorithm
converges to an approximate least squares solution, albeit at a global sublinear rate.\footnote{For the purposes of this paper, linear convergence of a sequence $\{a_i\}$ will mean that $a_i \rightarrow \bar a$ and there exists a scalar $\gamma <1$ for which $(a_i - \bar a )/(a_{i-1} - \bar a) \le \gamma$ for all $i$.  Sublinear convergence will mean that there is no such $\gamma <1$ that satisfies the above property.  For much more general versions of linear and sublinear convergence, see \cite{bertsekas} for example.}

Since the main difference between $\FSe$ and \ebs lies in the choice of the step-size used to update the coefficients, let us therefore consider a non-constant step-size/non-uniform learning rate version of $\FSe$, which we call $\FSek$.
$\FSek$ replaces Step~3 of $\FSe$ by:
\begin{description}
\item[residual update:] $\hat r^{k+1} \gets \hat{r}^k - \varepsilon_{k} \ \sgn((\hat{r}^k)^T\bX_{j_k})\bX_{j_k}$
\item[coefficient update:] $\hat{\beta}^{k+1}_{j_k} \gets \hat{\beta}^{k}_{j_k} + \varepsilon_{k}\ \sgn((\hat{r}^k)^T\bX_{j_k})$ and $\hat{\beta}^{k+1}_j \gets \hat{\beta}^{k}_j \ , j \neq j_k$ ,
\end{description}
where $\{\varepsilon_{k}\}$ is a sequence of learning-rates (or step-sizes) which depend upon the iteration index $k$. \ebs~can thus
be thought of as a version of $\FSek$, where the step-size $\varepsilon_k$ is given by $\varepsilon_{k} :=  \varepsilon  \tilde{u}_{j_k}\sgn((\hat{r}^k)^T\bX_{j_k})$.

In Section~\ref{boostingassubgrad} we provide a unified treatment of \ebs, $\FSe$, and $\FSek$, wherein we show that all these methods can be viewed as special instances of (convex) subgradient optimization.  For another perspective on the similarities and differences between $\FSe$ and \ebs, see~\cite{buehlmann08:_boost_algor}.

\begin{figure}[h]
\centering
\scalebox{.90}[.7]{\begin{tabular}{l c c c}
& \small{$\rho=0$}  &\small{$\rho=0.5$}  & \small{$\rho=0.9$}   \\

\rotatebox{90}{\sf {\scriptsize {~~~~~~~~~~~~~~~~~~~~~~~Training Error}}}&\includegraphics[width=0.31\textwidth,height=0.3\textheight,  trim = 1.1cm 1.2cm 1cm 2cm, clip = true ]{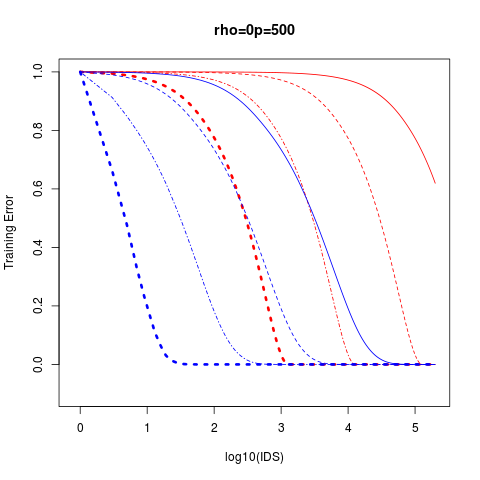}&
\includegraphics[width=0.31\textwidth,height=0.3\textheight,  trim = 2cm 1.2cm 1cm 2cm, clip = true ]{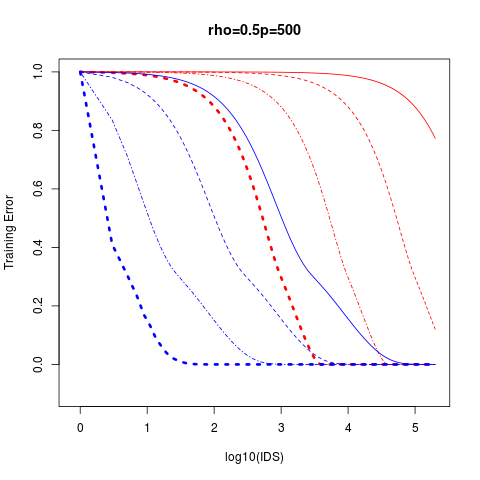}&
\includegraphics[width=0.31\textwidth,height=0.3\textheight,  trim = 2cm 1.2cm 1cm 2cm, clip = true ]{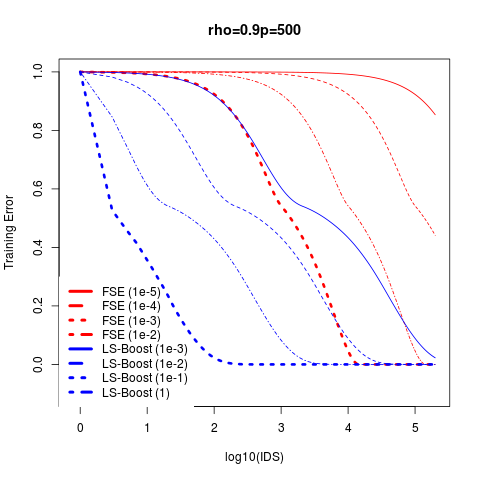} \\

\rotatebox{90}{\sf {\scriptsize{~~~~~~~~~~~~~~~$\ell_{1}$ norm of Coefficients}}}&\includegraphics[width=0.31\textwidth,height=0.3\textheight,  trim = 1.1cm 1.2cm 1cm 2cm, clip = true ]{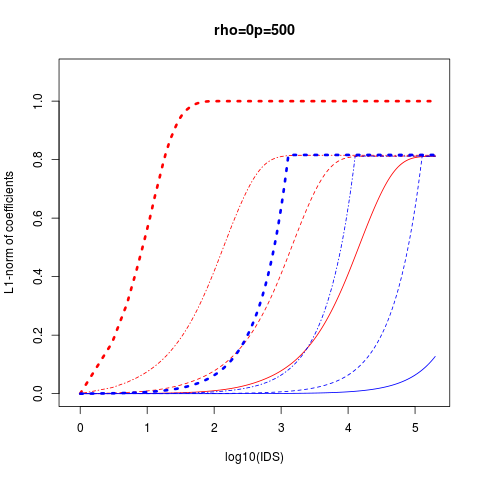}&
\includegraphics[width=0.31\textwidth,height=0.3\textheight,  trim = 2cm 1.2cm 1cm 2cm, clip = true ]{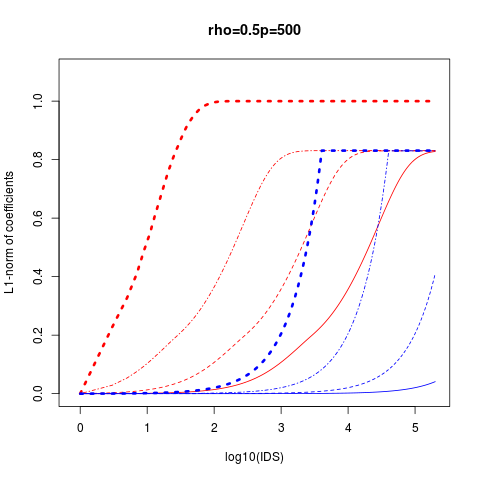}&
\includegraphics[width=0.31\textwidth,height=0.3\textheight,  trim = 2cm 1.2cm 1cm 2cm, clip = true ]{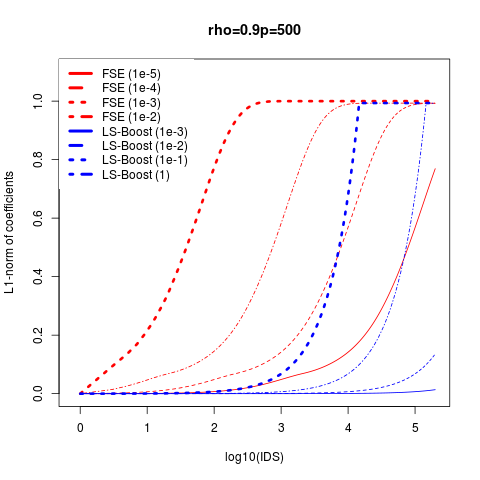} \\

&\multicolumn{3}{c}{\small{\sf{$\log_{10}(\text{Number of Boosting Iterations})$}}} \\
\end{tabular}}
\caption{{\small{Evolution of \ebs and $\FSe$ versus iterations (in the log-scale), run on a synthetic dataset with $n=50$, $p=500$; 
the covariates are drawn from a Gaussian distribution with pairwise correlations $\rho$.  The true $\beta$ has ten non-zeros with $\beta_{i} =1, i \leq 10$ and $\text{SNR}=1$. Several different values of $\rho$ and $\varepsilon$ have been considered. 
[Top Row] Shows the training errors for different learning rates, [Bottom Row] shows the $\ell_{1}$ norm
of the coefficients produced by the different algorithms for different learning rates (here the values have all been re-scaled so that the y-axis lies in $[0,1]$). For detailed discussions about the figure, see the main text.
}} }\label{fig:fse-ebs-loss-1}
\end{figure}

Both \ebs ~and $\FSe$ may be interpreted as  ``cautious'' versions of Forward Selection or Forward Stepwise regression~\cite{miller2002subset,W80}, a classical variable selection tool used widely in applied statistical modeling.
Forward Stepwise regression builds a model sequentially by adding one variable at a time.  At every stage, the algorithm
identifies the variable most correlated (in absolute value) with the current residual, includes it in the model, and updates the \emph{joint least squares} fit based on the
current set of predictors. This aggressive update procedure, where all of the coefficients in the active set are simultaneously updated, is what makes stepwise regression quite different from $\FSe$ and \ebs --- in the latter algorithms only one variable
is updated (with an additional shrinkage factor) at every iteration.

\paragraph{Explicit Regularization Schemes}
While all the methods described above are known to deliver regularized models, the nature of regularization imparted by the algorithms are rather implicit.
To highlight the difference between an implicit and explicit regularization scheme, consider $\ell_{1}$-regularized regression, namely  \lasso \cite{Tibshirani1996LASSO}, which is an extremely
popular method especially for high-dimensional linear regression, i.e., when the number of parameters far exceed the number of samples.
The \lasso performs both variable selection and shrinkage in the regression coefficients, thereby leading to parsimonious models with good predictive performance.
The constraint version of \lasso with regularization parameter $\delta \geq 0$ is given by the following convex quadratic optimization problem:
\begin{equation}\label{poi-lasso}
\begin{array}{rccl}
\mathrm{\lasso} : \ \ \ L_{n, \delta}^\ast := & \min\limits_{\beta}   & \frac{1}{2n}\|\by - \bX\beta\|_2^2\\
&\mathrm{s.t.} & \|\beta\|_1 \leq \delta \ .
\end{array}
\end{equation}
The nature of regularization via the \lasso is explicit --- by its very formulation, it is set up to find the best least squares solution subject to a constraint on the $\ell_{1}$ norm of
the regression coefficients. This is in contrast to boosting algorithms like $\FSe$ and \ebs, wherein regularization is imparted implicitly as a consequence of the structural properties of the algorithm with $\varepsilon$ and $M$ controlling the amount of shrinkage.

\paragraph{Boosting and Lasso} Although \lasso and the above boosting methods originate from different perspectives, there are interesting similarities between the two as
nicely explored in~\cite{ESLBook,LARS,hastie06:_forwar}.

For certain datasets the coefficient profiles\footnote{By a coefficient profile we mean the map $\lambda \mapsto \hat{\beta}_{\lambda}$ where, $\lambda \in \Lambda$ indexes a family of coefficients $\hat{\beta}_{\lambda}$. For example, the family of \lasso solutions~\eqref{poi-lasso} $\{\hat{\beta}_{\delta}, \delta \geq 0\}$ indexed by $\delta$ can also be indexed by the $\ell_{1}$ norm of the coefficients, i.e., $\lambda = \| \hat{\beta}_{\delta}\|_{1}$. This leads to a coefficient profile that depends upon the $\ell_{1}$ norm of the regression coefficients.
Similarly, one may consider the coefficient profile of $\FS_0$ as a function of the $\ell_1$ norm of the regression coefficients delivered by the $\FS_0$ algorithm.} of \lasso and $\FS_0$ are exactly the same~\cite{ESLBook}, where $\FS_0$ denotes the limiting case of the $\FSe$ algorithm as $\varepsilon \rightarrow 0+$. Figure~\ref{fig:lasso-similar} (top panel) shows an example where the \lasso profile is similar to those of $\FSe$ and \ebs (for small values of $\varepsilon$).
However, they are different in general (Figure~\ref{fig:lasso-similar}, bottom panel). Under some conditions on the monotonicity of the coefficient profiles of the \lasso solution,
the \lasso and $\FS_0$ profiles are exactly the same~\cite{LARS,hastie06:_forwar}.
Such equivalences exist for more general loss functions~\cite{rosset02:_boost_regul_path_maxim_margin_class}, albeit under fairly strong assumptions on problem data.

Efforts to understand boosting algorithms in general and in particular the
$\FSe$ algorithm  paved the way for the celebrated Least Angle Regression aka the \LAR algorithm~\cite{LARS} (see also~\cite{ESLBook}).
  The \LAR algorithm is a democratic version of Forward Stepwise.
  Upon identifying the variable most correlated with the current residual in absolute value (as in Forward Stepwise), it moves the coefficient
of the variable towards its least squares value in a continuous fashion.
An appealing aspect of the \LAR algorithm is that it provides a unified algorithmic framework for variable selection and shrinkage -- one instance of \LAR leads to a path algorithm for the \lassoperiod, and a different instance leads to the limiting case of the $\FSe$ algorithm as $\varepsilon \rightarrow 0+$, namely $\FS_0$. In fact, the \emph{Stagewise} version of the \LAR algorithm provides an efficient
way to compute the coefficient profile for $\FS_0$.

\begin{figure}[h!]
\centering
\scalebox{0.9}[.7]{\begin{tabular}{l c cc c c}
\multicolumn{6}{c}{Coefficient Profiles: \ebs, $\FSe$ and Lasso \medskip}\\
&\small{\sf { \lasso}} && \small{\sf{ \ebs, $\varepsilon=0.01$ } } && \small{ \sf{ $\FSe$, $\varepsilon=10^{-5}$} } \\
\rotatebox{90}{\sf {\scriptsize {~~~~~~~~~~~~~~~Regression Coefficients}}}&\includegraphics[width=0.31\textwidth,height=0.25\textheight,  trim = 1.0cm 1.2cm 1cm 1.9cm, clip = true ]{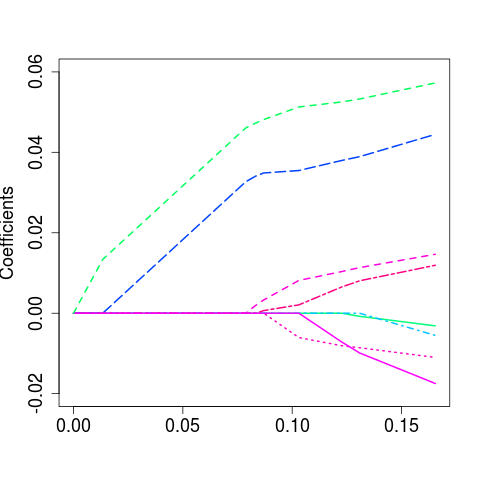}&&
\includegraphics[width=0.31\textwidth,height=0.25\textheight,  trim = 1.8cm 1.2cm 1cm 2cm, clip = true ]{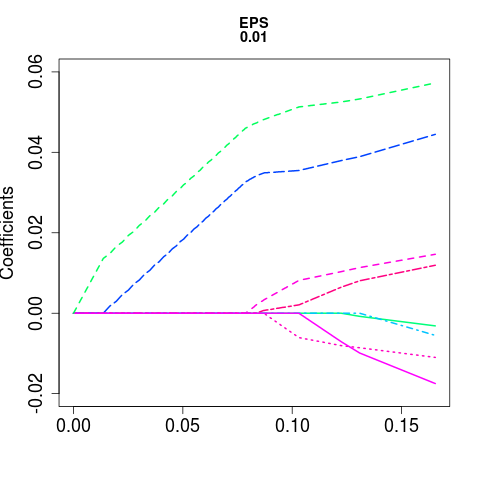}&&
\includegraphics[width=0.31\textwidth,height=0.25\textheight,  trim = 1.8cm 1.2cm 1cm 2cm, clip = true ]{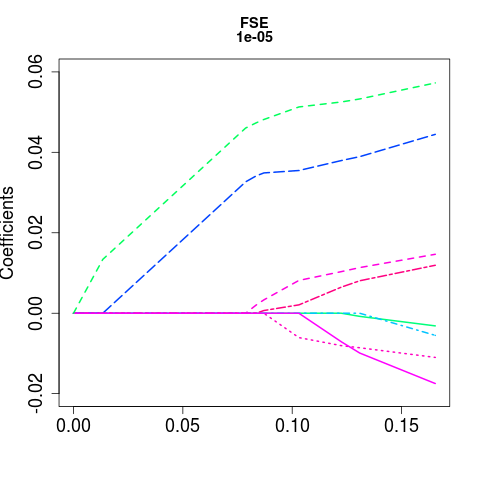} \\

\rotatebox{90}{\sf {\scriptsize {~~~~~~~~~~~~~~~Regression Coefficients}}}&\includegraphics[width=0.31\textwidth,height=0.25\textheight,  trim = 1.0cm 1.2cm 1cm 1.9cm, clip = true ]{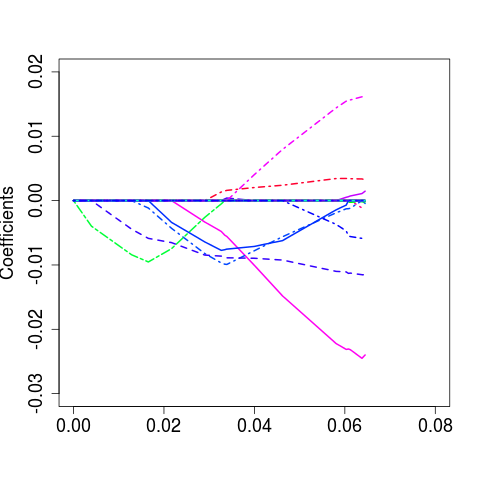}&&
\includegraphics[width=0.31\textwidth,height=0.25\textheight,  trim = 1.8cm 1.2cm 1cm 2cm, clip = true ]{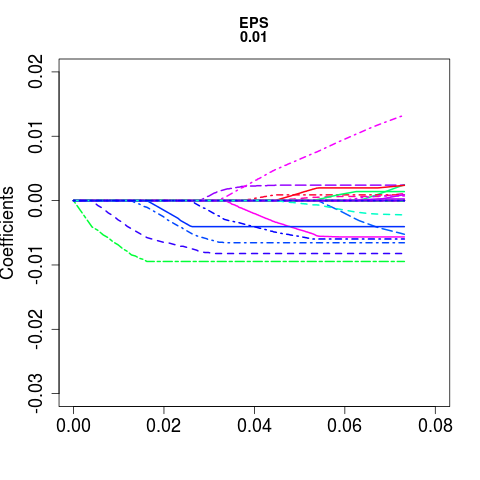}&&
\includegraphics[width=0.31\textwidth,height=0.25\textheight,  trim = 1.8cm 1.2cm 1cm 2cm, clip = true ]{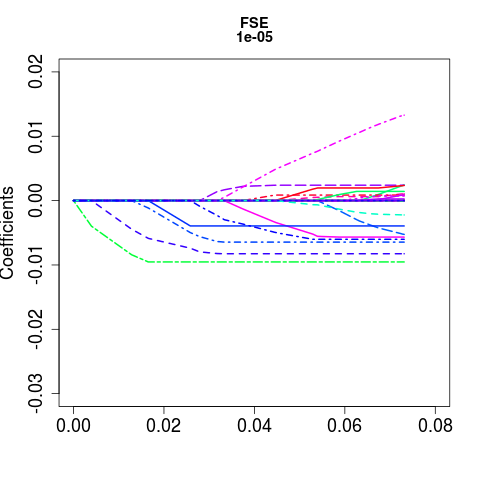} \\
&\scriptsize{\sf {$\ell_{1}$ shrinkage of coefficients}} &&\scriptsize{\sf {$\ell_{1}$ shrinkage of coefficients}}  && \scriptsize{\sf {$\ell_{1}$ shrinkage of coefficients}}  \\
\end{tabular}}
\caption{{\small{Coefficient Profiles for different algorithms as a function of the $\ell_{1}$ norm of the regression coefficients on two different datasets.
 [Top Panel] Corresponds to the full  Prostate Cancer dataset described in Section~\ref{sec:experiments} with $n=98$ and $p=8$.
All the  coefficient profiles look similar.
[Bottom Panel] Corresponds to a subset of samples of the Prostate Cancer dataset  with $n=10$; we also included all second order interactions to get $p=44$.
The coefficient profile of \lasso is seen to be different from $\FSe$ and \ebs.
Figure~\ref{fig:bounds-lasso} shows the training error {\it vis-\`{a}-vis} the $\ell_{1}$-shrinkage of the models, for the same profiles. }} }\label{fig:lasso-similar}
\end{figure}
Due to the close similarities between the \lasso and boosting coefficient profiles, it is natural to investigate probable modifications of boosting that might lead to the \lasso solution path.
This is one of the topics we study in this paper. In a closely related but different line of approach, \cite{zhao2007stagewise} describes \textsc{BLasso}, a modification of  the $\FSe$ algorithm with the inclusion of additional ``backward steps'' so that the resultant coefficient profile mimics the \lasso path.

\paragraph{Subgradient Optimization as a Unifying Viewpoint of Boosting and Lasso}
 In spite of the various nice perspectives on $\FSe$ and its connections to the \lasso as described above,
the present understanding about the relationships between \lassoperiod, $\FSe$, and \ebs for arbitrary datasets and $\varepsilon>0$ is still fairly limited.  One of the aims of this paper is to contribute some substantial further understanding of the relationship between these methods.
Just like the \LAR algorithm can be viewed as a master algorithm with special instances being the \lasso and $\FS_0$, in this paper we establish that
$\FSe$, \ebs and \lasso can be viewed as special instances of one grand algorithm: the subgradient descent method (of convex optimization) applied to the following parametric class of optimization problems:
\begin{equation}\label{dual-lasso-1-0}
P_\delta \ : \ \  \mini_{r} \;\;\; \| \M{X}^T r \|_{\infty} + \frac{1}{2\delta} \| {r} - \M{y} \|_{2}^2 \ \  \;\;\; \mathrm{where} \ {r} = \M{y} - \M{X} \beta \; \  \mathrm{for~some~} \beta \ ,
 \end{equation} and where $\delta \in (0,\infty]$ is a regularization parameter.  Here the first term is the maximum absolute correlation between the features $\M{X}_i$ and the residuals $r$, and the second term is a regularization term that penalizes residuals that are far from the observations $\by$ (which itself can be interpreted as the residuals for the null model $\beta = 0$).  The parameter $\delta$ determines the relative importance assigned to the regularization term, with $\delta=+\infty$ corresponding to no importance whatsoever.
As we describe in Section~\ref{sect_lasso}, Problem~\eqref{dual-lasso-1-0} is in fact a dual of the \lasso Problem~\eqref{poi-lasso}.

 The subgradient descent algorithm applied to Problem~\eqref{dual-lasso-1-0} leads to a new boosting algorithm that is almost identical to $\FSe$.
We denote this algorithm by $\RFSe$ (for Regularized incremental Forward Stagewise regression).
We show the following properties of the new algorithm $\RFSe$:
\begin{itemize}
\item $\RFSe$ is almost identical to $\FSe$, except that it first shrinks all of the coefficients of $\hat \beta^k$ by a scaling factor $1 - \tfrac{\varepsilon}{\delta} < 1$ and then updates the selected coefficient $j_k$ in the same additive fashion as $\FSe$.
\item as the number of iterations become large, $\RFSe$ delivers an approximate \lasso solution.
\item an adaptive version of $\RFSe$, which we call $\PATHe$, is shown to approximate the path of \lasso solutions with precise bounds that quantify the approximation error over the path.

\item $\RFSe$ specializes to $\FSe$, \ebs and the \lasso depending on the parameter value $\delta$ and the learning rates (step-sizes) used therein.

\item the computational guarantees derived herein for $\RFSe$ provide a precise description of the evolution of data-fidelity { \it vis-\`{a}-vis } $\ell_{1}$ shrinkage of the models obtained along the boosting iterations.
\item in our experiments, we observe that $\RFSe$ leads to models with statistical properties that compare favorably with the \lasso and $\FSe$. It also leads to models that are sparser than
$\FSe$.

\end{itemize}
We emphasize that all of these results apply to the finite sample setup
with no assumptions about the dataset nor about the relative sizes of $p$ and $n$.

\paragraph{Contributions}
A summary of the contributions of this paper is as follows:
\begin{enumerate}
\item We analyze several boosting algorithms popularly used in the context of linear regression via the lens of first-order methods in convex optimization.  We show that existing boosting algorithms, namely $\FSe$ and \ebs, can be viewed as instances of the subgradient descent method aimed at minimizing the maximum absolute correlation between the covariates and residuals, namely $\|\M{X}^T r \|_{\infty}$.  This viewpoint provides several insights about the operational characteristics of these boosting algorithms.

\item We derive novel computational guarantees for $\FSe$ and \ebs. These results quantify the rate at which the estimates produced by a boosting algorithm make their way towards an unregularized least squares fit  (as a function of the number of iterations and the learning rate $\varepsilon$). In particular, we demonstrate that for \emph{any} value of $\varepsilon \in (0,1]$ the estimates produced by \ebs converge linearly to their respective least squares values and
 the $\ell_{1}$ norm of the coefficients grows at a rate $O(\sqrt{\varepsilon k})$.
$\FSe$ on the other hand demonstrates a slower sublinear convergence rate to an $O(\varepsilon)$-approximate least squares solution, while
 the $\ell_{1}$ norm of the coefficients grows at a rate $O(\varepsilon k)$.

\item Our computational guarantees yield precise characterizations
of the amount of data-fidelity (training error) and regularization imparted by running a boosting algorithm for $k$ iterations. These results apply to any dataset and do not rely upon
any distributional or structural assumptions on the data generating mechanism.

\item We show that subgradient descent applied to a
regularized version of the loss function $\|\M{X}^Tr \|_{\infty}$, with regularization parameter $\delta$, leads to a new algorithm which we call $\RFSe$, that is a natural and simple generalization of
$\FSe$.  When compared to $\FSe$, the algorithm $\RFSe$ performs a seemingly minor rescaling of the coefficients at every iteration.  As the number of iterations $k$ increases, $\RFSe$ delivers an approximate \lasso solution~\eqref{poi-lasso}.
Moreover, as the algorithm progresses, the $\ell_{1}$ norms of the coefficients evolve as a geometric series towards the regularization parameter value $\delta$.  We derive precise computational guarantees that inform us about the training error and regularization imparted by $\RFSe$.

 \item We present an adaptive extension of $\RFSe$, called $\PATHe$, that delivers a path of approximate \lasso solutions for any prescribed grid sequence of regularization parameters.  We derive guarantees that quantify the average distance from the approximate path traced by $\PATHe$ to the \lasso solution path.
\end{enumerate}

\paragraph{Organization of the Paper}

The paper is organized as follows.  In Section \ref{LSBsection} we analyze the convergence behavior of the \ebs algorithm.
In Section \ref{sect_subgrad} we present a unifying algorithmic framework for $\FSe$, $\FSek$, and \ebs as subgradient descent.
In Section~\ref{sect_lasso} we present the regularized correlation minimization Problem~\eqref{dual-lasso-1-0} and a naturally associated boosting algorithm $\RFSe$, as instantiations of
subgradient descent on the family of Problems~\eqref{dual-lasso-1-0}.
In each of the above cases, we present precise computational guarantees of the algorithms for convergence of residuals, training errors, and shrinkage
and study their statistical implications.
In Section \ref{dynamic_sect}, we further expand $\RFSe$ into a method for computing approximate solutions of the \lasso path.
Section~\ref{sec:experiments} contains computational experiments. To improve readability, most of the technical details have been placed in the Appendix~\ref{sec-appendix}.

\subsection*{Notation}
For a vector $x \in \mathbb{R}^m$, we use $x_i$ to denote the $i$-th coordinate of $x$.  We use superscripts to index vectors in a sequence $\{x^k\}$. Let $e_j$ denote the $j$-th unit vector in $\mathbb{R}^m$, and let $e = (1, \ldots, 1)$ denote the vector of ones.  Let $\|\cdot\|_q$ denote the $\ell_q$ norm for $q \in [1, \infty]$ with unit ball $B_q$, and let $\|v\|_0$ denote the number of non-zero coefficients of the vector $v$. For $A \in \mathbb{R}^{m \times n}$, let $\|A\|_{q_1, q_2} := \max\limits_{x : \|x\|_{q_1} \leq 1}\|Ax\|_{q_2}$ be the operator norm. In particular, $\|A\|_{1,2} = \max(\|A_1\|_2, \ldots, \|A_n\|_2)$ is the maximum $\ell_2$ norm of the columns of $A$. For a scalar $\alpha$, $\sgn(\alpha)$ denotes the sign of $\alpha$.  The notation ``$\tilde v \leftarrow \argmax\limits_{v  \in S} \{f(v)\}$'' denotes assigning $\tilde v$ to be any optimal solution of the problem $\max\limits_{v  \in S} \{f(v)\}$.  For a convex set $P$ let $\Pi_P(\cdot)$ denote the Euclidean projection operator onto $P$, namely $\Pi_P(\bar x):=\argmin_{x \in P} \|x-\bar x\|_2$. Let $\partial f(\cdot)$ denote the subdifferential operator of a convex function $f(\cdot)$. If $Q \ne 0$ is a symmetric positive semidefinite matrix, let $\lambda_{\max}(Q)$, $\lambda_{\min}(Q)$, and $\lambda_{\pmin}(Q)$ denote the largest, smallest, and smallest nonzero (and hence positive) eigenvalues of $Q$, respectively.

\section{\ebs: Computational Guarantees and Statistical Implications}\label{LSBsection}

\paragraph{Roadmap} We begin our formal study by examining the \ebs algorithm. We study the rate at which the coefficients generated by \ebs converge to the set of unregularized least square solutions.
This characterizes the amount of data-fidelity as a function of the
number of iterations and $\varepsilon$.  In particular, we show (global) linear convergence of the regression coefficients to the set of least squares coefficients, with similar convergence rates derived for the prediction estimates and the boosting training errors delivered by \ebs.
We also present bounds on the shrinkage of the regression coefficients $\hat{\beta}^k$ as a function of $k$ and $\varepsilon$, thereby describing how
the amount of shrinkage of the regression coefficients changes as a function of the number of iterations $k$.

\subsection{Computational Guarantees and Intuition}\label{sec:ebs-bounds}
We first review some useful properties associated with the familiar least squares regression problem:
\begin{equation}\label{poi-ls2}
\begin{array}{rccl}
\mathrm{LS} : \ \ \ L_n^* := & \min\limits_{\beta}   & L_n(\beta) := \frac{1}{2n}\|\by - \bX\beta\|_2^2\\
&\mathrm{s.t.} & \beta\ \in \mathbb{R}^p \ ,
\end{array}
\end{equation}
where $L_n(\cdot)$ is the least squares loss, whose gradient is:
\begin{equation}\label{grad1}
\nabla L_n(\beta) = -\tfrac{1}{n}\bX^T(\by - \bX\beta) = -\tfrac{1}{n}\bX^Tr \
\end{equation}
where $r = \by - \bX \beta$ is the vector of residuals corresponding to the regression coefficients $\beta$.  It follows that $\beta$ is a least-squares solution of $\mathrm{LS}$ if and only if $\nabla L_n(\beta) = 0$, which leads to the well known normal equations:
\begin{equation}\label{grad2}
0 =  -\bX^T(\by - \bX\beta) = -\bX^Tr \ .
\end{equation}
It also holds that:
\begin{equation}\label{grad_norm}
n \cdot \|\nabla L_n(\beta)\|_\infty = \|\bX^Tr\|_\infty = \max\limits_{j \in \{1, \ldots, p \}}\{|r^T\bX_j | \} \ .
\end{equation}

The following theorem describes precise computational guarantees for \textsc{LS-Boost}$(\varepsilon)$: linear convergence of \ebs with respect to \eqref{poi-ls2}, and bounds on the $\ell_1$ shrinkage of the coefficients produced.  Note that the theorem uses the quantity $\lambda_{\pmin}(\bX^T\bX)$ which denotes the smallest nonzero (and hence positive) eigenvalue of $\bX^T\bX$.\medskip

\begin{theorem}\label{august24-2} {\bf (Linear Convergence of \ebs for Least Squares)}  Consider the \ebs algorithm with learning rate $\varepsilon \in (0,1]$, and define the linear convergence rate coefficient $\gamma$:
\begin{equation}\label{gamma_def}
\gamma := \left(1-\frac{\varepsilon(2-\varepsilon)\lambda_{\pmin}(\bX^T\bX)}{4p} \right) < 1 \ .
\end{equation}
For all $k \geq 0$ the following bounds hold:
\begin{itemize}
\item[(i)] (training error): $L_n(\hat{\beta}^k) - L_n^* \leq \frac{1}{2n}\|\bX\hat{\beta}_{\text{LS}}\|_2^2 \cdot \gamma^k$ \\
\item[(ii)] (regression coefficients): there exists a least squares solution $\hat{\beta}^k_{LS}$ such that:
 $$\|\hat{\beta}^k-\hat{\beta}^k_{LS}\|_2  \le \frac{\|\bX\hat{\beta}_{\text{LS}}\|_2}{\sqrt{\lambda_{\pmin}(\bX^T\bX)}}\cdot \gamma^{k/2}$$
\item[(iii)] (predictions):  for every least-squares solution $\hat{\beta}_{\text{LS}}$ it holds that $$\|\bX\hat{\beta}^k - \bX \hat{\beta}_{\text{LS}}\|_2 \le \|\bX\hat{\beta}_{\text{LS}}\|_2 \cdot \gamma^{k/2}$$
\item[(iv)] (gradient norm/correlation values): $\|\nabla L_n(\hat \beta^k)\|_\infty = \tfrac{1}{n}\|\bX^T\hat{r}^k\|_\infty \leq \tfrac{1}{n}\|\bX\hat{\beta}_{\text{LS}}\|_2 \cdot \gamma^{k/2}$ \\
\item[(v)] ($\ell_{1}$-shrinkage of coefficients):
$$\|\hat{\beta}^k\|_1 \le  \min\left\{\sqrt{k}\sqrt{\tfrac{\varepsilon}{2-\varepsilon}}\sqrt{\|\bX\hat{\beta}_{\text{LS}}\|^2_2 - \|\bX\hat{\beta}_{\text{LS}}-\bX\hat{\beta}^k\|^2_2}\  \ , \ \frac{\varepsilon\|\bX\hat\beta_{LS}\|_2}{1 - \sqrt{\gamma}}\left(1 - \gamma^{k/2}\right)\right\} $$
\item[(vi)] (sparsity of coefficients): $\|\hat{\beta}^k\|_0 \le k$. \qed
\end{itemize}
\end{theorem}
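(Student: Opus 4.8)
The engine of the entire theorem is an exact one-step decrease formula for the residuals. Because the columns satisfy $\|\bX_j\|_2=1$, the univariate minimizer in Step 2 is $\tilde u_m = \bX_m^T\hat r^k$, so the selected index obeys $|\tilde u_{j_k}| = \|\bX^T\hat r^k\|_\infty$. Substituting the residual update $\hat r^{k+1}=\hat r^k-\varepsilon\bX_{j_k}\tilde u_{j_k}$ and using $\|\bX_{j_k}\|_2=1$ together with $(\hat r^k)^T\bX_{j_k}=\tilde u_{j_k}$ gives, for every $k$,
$$\|\hat r^{k+1}\|_2^2 = \|\hat r^k\|_2^2 - \varepsilon(2-\varepsilon)\,\|\bX^T\hat r^k\|_\infty^2.$$
Everything below is extracted from this identity. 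I would first record the orthogonal decomposition coming from the normal equations $\bX^T r^\ast = 0$, where $r^\ast := \by - \bX\hat{\beta}_{\text{LS}}$: since $\hat r^k - r^\ast = \bX(\hat{\beta}_{\text{LS}} - \hat\beta^k)$ lies in the column space of $\bX$ while $r^\ast$ is orthogonal to it, Pythagoras yields $\|\hat r^k\|_2^2 - \|r^\ast\|_2^2 = \|\bX\hat\beta^k - \bX\hat{\beta}_{\text{LS}}\|_2^2 = 2n\,(L_n(\hat\beta^k)-L_n^\ast)$.

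To obtain linear convergence (parts (i), (iii), (iv)) the plan is to lower-bound the per-step progress $\|\bX^T\hat r^k\|_\infty^2$ by a multiple of the optimality gap. Writing $d:=\bX(\hat\beta^k-\hat{\beta}_{\text{LS}})$, I use $\|\bX^T\hat r^k\|_\infty^2 \ge \tfrac1p\|\bX^T\hat r^k\|_2^2$ together with $\bX^T\hat r^k = \bX^T d$ and the Rayleigh bound $\|\bX^T d\|_2^2 = d^T\bX\bX^T d \ge \lambda_{\pmin}(\bX^T\bX)\,\|d\|_2^2$, valid because $d$ lies in the range of $\bX\bX^T$ and $\bX\bX^T,\bX^T\bX$ share the same nonzero eigenvalues. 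Feeding this into the decrease identity produces a contraction $L_n(\hat\beta^{k+1})-L_n^\ast \le \gamma\,(L_n(\hat\beta^k)-L_n^\ast)$ with $\gamma$ of the stated form (the precise denominator constant being absorbed into these bounding steps), and iterating from $\hat\beta^0=0$, where $L_n(0)-L_n^\ast = \tfrac{1}{2n}\|\bX\hat{\beta}_{\text{LS}}\|_2^2$, proves (i). Part (iii) is then immediate since $L_n(\hat\beta^k)-L_n^\ast = \tfrac1{2n}\|d\|_2^2$ and the fitted values $\bX\hat{\beta}_{\text{LS}}$ are identical for all least squares solutions; part (iv) follows from $\|\bX^T\hat r^k\|_\infty=\max_j|\bX_j^Td|\le\|d\|_2$ combined with (iii).

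Part (ii) is the one place where the non-uniqueness of $\hat{\beta}_{\text{LS}}$ must be handled, and I expect it to be the main obstacle (the quantitative crux being the progress lower bound above). The fitted-value bound (iii) controls $\|\bX(\hat\beta^k-\hat{\beta}_{\text{LS}})\|_2$ but not $\|\hat\beta^k-\hat{\beta}_{\text{LS}}\|_2$, since the two can differ by an arbitrary element of $\mathrm{null}(\bX)$. I would therefore choose $\hat\beta^k_{LS}$ to be the least squares solution \emph{closest} to $\hat\beta^k$, so that $v:=\hat\beta^k-\hat\beta^k_{LS}$ is orthogonal to $\mathrm{null}(\bX)$ and hence lies in the range of $\bX^T\bX$. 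For such $v$ the Rayleigh bound gives $\|\bX v\|_2^2 \ge \lambda_{\pmin}(\bX^T\bX)\|v\|_2^2$, and since $\bX v = \bX\hat\beta^k - \bX\hat{\beta}_{\text{LS}}=d$, part (iii) yields $\|v\|_2\le \|d\|_2/\sqrt{\lambda_{\pmin}(\bX^T\bX)}\le \|\bX\hat{\beta}_{\text{LS}}\|_2\,\gamma^{k/2}/\sqrt{\lambda_{\pmin}(\bX^T\bX)}$, which is (ii).

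For the shrinkage bounds (v) I would start from the observation that only coordinate $j_k$ moves at step $k$, by $\varepsilon|\tilde u_{j_k}|=\varepsilon\|\bX^T\hat r^k\|_\infty$, so $\|\hat\beta^k\|_1\le \varepsilon\sum_{i=0}^{k-1}\|\bX^T\hat r^i\|_\infty$. Applying Cauchy–Schwarz to this sum and telescoping the decrease identity, which gives $\sum_{i=0}^{k-1}\|\bX^T\hat r^i\|_\infty^2 = \tfrac{1}{\varepsilon(2-\varepsilon)}(\|\hat r^0\|_2^2-\|\hat r^k\|_2^2) = \tfrac{1}{\varepsilon(2-\varepsilon)}\big(\|\bX\hat{\beta}_{\text{LS}}\|_2^2-\|\bX\hat\beta^k-\bX\hat{\beta}_{\text{LS}}\|_2^2\big)$, produces the first bound. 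For the second bound I instead control each summand geometrically via $\|\bX^T\hat r^i\|_\infty\le\|\bX\hat{\beta}_{\text{LS}}\|_2\,\gamma^{i/2}$ (the step used in (iv)) and sum the geometric series $\sum_{i=0}^{k-1}\gamma^{i/2}=(1-\gamma^{k/2})/(1-\sqrt\gamma)$. Finally (vi) is immediate: the support of $\hat\beta^k$ is contained in $\{j_0,\dots,j_{k-1}\}$, hence $\|\hat\beta^k\|_0\le k$.
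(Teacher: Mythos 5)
Your proposal is correct, and its skeleton is the same as the paper's: the exact one-step decrease identity $\|\hat r^{k+1}\|_2^2 = \|\hat r^k\|_2^2 - \varepsilon(2-\varepsilon)\|\bX^T\hat r^k\|_\infty^2$, a lower bound on the per-step progress in terms of the optimality gap to get the contraction for (i), the Pythagorean identity $\|\hat r^k\|_2^2-\|r^*\|_2^2 = 2n(L_n(\hat\beta^k)-L_n^*)$ for (iii), Cauchy--Schwarz plus telescoping for the first bound in (v), and the geometric series for the second. The one substantive difference is how you obtain the quadratic estimates. The paper routes both the progress bound and part (ii) through a general lemma on convex quadratics (its Proposition on $\|\nabla h(x)\|_2 \ge \sqrt{\lambda_{\pmin}(Q)(h(x)-h^*)/2}$, proved via the gradient inequality), which loses a factor of $2$ and is the source of the $4p$ in $\gamma$. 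Your direct Rayleigh-quotient argument --- $\|\bX^T d\|_2^2 = d^T\bX\bX^Td \ge \lambda_{\pmin}(\bX^T\bX)\|d\|_2^2$ for $d$ in the range of $\bX$, and likewise $\|\bX v\|_2^2 \ge \lambda_{\pmin}(\bX^T\bX)\|v\|_2^2$ for $v\perp\mathrm{null}(\bX)$ in part (ii) --- is tighter and yields the contraction factor $1-\varepsilon(2-\varepsilon)\lambda_{\pmin}(\bX^T\bX)/p$, which is at most the stated $\gamma$ and still nonnegative (since $\lambda_{\pmin}\le p$), so the theorem follows a fortiori. Your parenthetical that the denominator constant is ``absorbed into these bounding steps'' is the only imprecision: your steps do not produce the $4$; you should either state the sharper rate or note explicitly that the weaker advertised $\gamma$ is implied. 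Your derivation of (iv) from $|\bX_j^Td|\le\|d\|_2$ and (iii) also differs cosmetically from the paper's (which bounds $|\tilde u_{j_k}|$ via the exact line-search decrease), but both give the identical bound $\|\bX\hat\beta_{\text{LS}}\|_2\gamma^{k/2}$ needed for the geometric half of (v).
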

Before remarking on the various parts of Theorem \ref{august24-2}, we first discuss the quantity $\gamma$ defined in \eqref{gamma_def}, which is called the linear convergence rate coefficient.  We can write $\gamma = 1-\tfrac{\varepsilon(2-\varepsilon)}{4\kappa(\bX^T\bX)}$ where $\kappa(\bX^T\bX)$ is defined to be the ratio $\kappa(\bX^T\bX):=\tfrac{p}{\lambda_{\pmin}(\bX^T\bX)}$.  Note that $\kappa(\bX^T\bX) \in [1, \infty)$.  To see this, let $\tilde{\beta}$ be an eigenvector associated with the largest eigenvalue of $\bX^T\bX$, then:
\begin{equation}\label{eigen_bound}
0 < \lambda_{\pmin}(\bX^T\bX) \leq \lambda_{\max}(\bX^T\bX) = \frac{\|\bX\tilde{\beta}\|_2^2}{\|\tilde{\beta}\|_2^2} \leq \frac{\|\bX\|_{1,2}^2\|\tilde{\beta}\|_1^2}{\|\tilde{\beta}\|_2^2} \leq p \ ,
\end{equation}
where the last inequality uses our assumption that the columns of $\bX$ have been normalized (whereby $\|\bX\|_{1,2} = 1$), and the fact that $\|\tilde{\beta}\|_1 \leq \sqrt{p}\|\tilde{\beta}\|_2$. This then implies that $\gamma \in [0.75,1.0)$ -- independent of any assumption on the dataset -- and most importantly it holds that $\gamma <1$.\medskip

Let us now make the following immediate remarks on Theorem \ref{august24-2}:
\begin{itemize}
\item The bounds in parts {\em (i)-(iv)} state that the training errors, regression coefficients, predictions, and correlation values produced by \ebs converge linearly (also known as geometric or exponential convergence) to their least squares counterparts: they decrease by at least the constant multiplicative factor $\gamma < 1$ for part {\em (i)}, and by $\sqrt{\gamma}$ for parts {\em (ii)-(iv)}, at every iteration. The bounds go to zero at this linear rate as $k \rightarrow \infty$.

\item The computational guarantees in parts {\em (i) - (vi)} provide characterizations of the data-fidelity and shrinkage of the \ebs\ algorithm for any given specifications of the learning rate $\varepsilon$ and the number of boosting iterations $k$. Moreover, the quantities appearing in the bounds can be computed from simple characteristics of the data that can be obtained {\it a priori} without even running the boosting algorithm.  (And indeed, one can even substitute $\|\by\|_2$ in place of $\|\bX\hat{\beta}_{\text{LS}} \|_2$ throughout the bounds if desired since $\|\bX\hat\beta_{\text{LS}} \|_2 \le \|\by\|_2$.)
\end{itemize}

\paragraph{Some Intuition Behind Theorem~\ref{august24-2}} Let us now study the \ebs algorithm and build intuition regarding its progress with respect to solving the unconstrained least squares problem \eqref{poi-ls2}, which will inform the results in Theorem \ref{august24-2}. Since the predictors are all standardized to have unit $\ell_2$ norm, it follows that the coefficient index $j_k$ and corresponding step-size $\tilde{u}_{j_k}$ selected in Step (2.) of \ebs satisfy:
\begin{equation}\label{choose-jk-1}
j_k \in \argmax\limits_{j \in \{1, \ldots, p\}} |(\hat{r}^k)^T\bX_j| \ \ \ \ \ \ \ \mbox{and} \ \ \ \ \ \ \ \tilde{u}_{j_k} = (\hat{r}^k)^T\bX_{j_k} \ .
\end{equation}
Combining \eqref{grad_norm} and \eqref{choose-jk-1}, we see that
\begin{equation}\label{drew2}
|\tilde{u}_{j_k}| = |(\hat{r}^k)^T\bX_{j_k}| = n \cdot \|\nabla L_n(\hat{\beta}^k)\|_\infty \ .
\end{equation}
Using the formula for $\tilde{u}_{j_k}$ in \eqref{choose-jk-1}, we have the following convenient way to express the change in residuals at each iteration of \textsc{LS-Boost}$(\varepsilon)$:
\begin{equation}\label{ebs_res_update}
\hat{r}^{k+1} = \hat{r}^k -\varepsilon \left((\hat{r}^k)^T\bX_{j_k} \right)\bX_{j_k} \ .
\end{equation}
Intuitively, since \eqref{ebs_res_update} expresses $\hat{r}^{k+1}$ as the difference of two correlated variables, $\hat r^k$ and $\sgn((\hat{r}^k)^T\bX_{j_k})\bX_{j_k}$, we expect the squared $\ell_2$ norm of $\hat r^{k+1}$ (i.e. its sample variance) to be smaller than that of $\hat r^k$. On the other hand, as we see from \eqref{ebs-fse-together-2}, convergence of the residuals is ensured by the dependence of the change in residuals on $|(\hat{r}^k)^T\bX_{j_k}|$, which goes to 0 as we approach a least squares solution. In the proof of Theorem \ref{august24-2} in Appendix \ref{ebs_main_proof} we make this intuition precise by using \eqref{ebs_res_update} to quantify the amount of decrease in the least squares objective function at each iteration of \ebs. The final ingredient of the proof uses properties of convex quadratic functions (Appendix \ref{appendix_quad}) to relate the exact amount of the decrease from iteration $k$ to $k+1$ to the current optimality gap $L_n(\hat{\beta}^{k}) -L_n^*$, which yields the following strong linear convergence property:
\begin{equation}\label{drew41}
L_n(\hat{\beta}^{k+1}) - L_n^* ~\le~  \gamma \cdot (L_n(\hat{\beta}^{k}) -L_n^*) \ .
\end{equation}
The above states that the training error gap decreases at each iteration by at least the multiplicative factor of $\gamma$, and clearly implies item {\em (i)} of Theorem \ref{august24-2}.\medskip

\paragraph{Comments on the global linear convergence rate in Theorem~\ref{august24-2}}  The global linear convergence of \ebs proved in Theorem \ref{august24-2}, while novel, is not at odds with the present understanding of such convergence for optimization problems.  One can view \ebs as performing steepest descent optimization steps with respect to the $\ell_1$ norm unit ball (rather than the $\ell_2$ norm unit ball which is the canonical version of the steepest descent method, see \cite{polyak}).  It is known \cite{polyak} that canonical steepest decent exhibits global linear convergence for convex quadratic optimization so long as the Hessian matrix $Q$ of the quadratic objective function is positive definite, i.e., $\lambda_{\min}(Q) > 0$.  And for the least squares loss function $Q=\tfrac{1}{n}\bX^T\bX$, which yields the condition that $\lambda_{\min}(\bX^T\bX) > 0$.  As discussed in \cite{BV2004}, this result extends to other norms defining steepest descent as well.  Hence what is modestly surprising herein is not the linear convergence {\it per se}, but rather that \ebs exhibits global linear convergence even when $\lambda_{\min}(\bX^T\bX) = 0$, i.e., even when $\bX$ does not have full column rank (essentially replacing $\lambda_{\text{min}}(\bX^T\bX)$ with $\lambda_{\text{pmin}}(\bX^T\bX)$ in our analysis).  This derives specifically from the structure of the least squares loss function, whose function values (and whose gradient) are invariant in the null space of $\bX$, i.e., $L_n(\beta + d) = L_n(\beta)$ for all $d$ satisfying $\bX d = 0$, and is thus rendered ``immune'' to changes in $\beta$ in the null space of $\bX^T\bX$.

\subsection{Statistical Insights from the Computational Guarantees}\label{subsec-inter-bounds-ebs-1}

Note that in most noisy problems, the limiting least squares solution is statistically less interesting than an estimate obtained in the interior of the boosting profile, since the latter typically corresponds to a model with better bias-variance tradeoff.
We thus caution the reader that the bounds in Theorem~\ref{august24-2} should \emph{not} be merely interpreted as statements about how rapidly the boosting iterations reach the least squares fit.  We  rather intend for these bounds to inform us about the \emph{evolution} of the training errors and the amount of shrinkage of the coefficients as the \ebs algorithm progresses and when $k$ is at most moderately large.  When the training errors are paired with the profile of
the $\ell_{1}$-shrinkage values of the regression coefficients, they lead to the ordered pairs:
\begin{equation}\label{pair-1}
\left ( \frac{1}{2n} \| \by - \bX\hat{\beta}^k \|^2_{2} \ , \ \| \hat{\beta}^k \|_{1} \right), \;\;\;\; k \geq 1 \ ,
\end{equation}
which describes the data-fidelity and $\ell_1$-shrinkage tradeoff as a function of $k$, for the given learning rate $\varepsilon>0$.  This profile is described in
Figure~\ref{fig:bounds-lasso} in Appendix \ref{dance} for several data instances. The bounds in Theorem~\ref{august24-2} provide estimates for the two components of the
ordered pair~\eqref{pair-1}, and they can be computed prior to running the boosting algorithm.
For simplicity, let us use the following crude estimate:
$$ \ell_{k}:= \min \left\{ \|\bX\hat{\beta}_{\text{LS}}\|_{2} \sqrt{\frac{k\varepsilon}{2-\varepsilon}} \ \ , \ \ \frac{\varepsilon \|\bX\hat{\beta}_{\text{LS}}\|_{2}}{1 - \sqrt{\gamma} } \left( 1 - \gamma^{\frac{k}{2}} \right) \right\},$$
which is an upper bound of the bound in part {\em (v)} of the theorem, to provide an upper approximation of $\| \hat{\beta}_{k}\|_{1}$.  Combining the above estimate
with the guarantee in part {\em (i)} of Theorem \ref{august24-2} in~\eqref{pair-1}, we obtain the following ordered pairs:
\begin{equation}\label{pair-11}
\left( \frac{1}{2n}\|\bX\hat{\beta}_{\text{LS}}\|^2_2 \cdot \gamma^{k} + L_{n}^* \ \ , \ \ \ell_{k} \right), \;\;\;\; k \geq 1 \ ,
\end{equation}
which describe the \emph{entire} profile of the training error bounds and the $\ell_{1}$-shrinkage bounds as a function of $k$ as suggested by Theorem~\ref{august24-2}.  These profiles, as described above in \eqref{pair-11}, are illustrated in Figure~\ref{fig:ls}.

\begin{figure}[h!]
\centering
\scalebox{.95}[0.8]{\begin{tabular}{l c c c}
&&&\\
\multicolumn{4}{c}{\sf \ebs algorithm: $\ell_{1}$-shrinkage versus data-fidelity tradeoffs (theoretical bounds) \medskip}\\
& \small{\sf {Synthetic dataset $(\kappa=1)$ }} &\small{\sf {Synthetic dataset $(\kappa=25)$ }} & \small{\sf {Leukemia dataset} } \\
\rotatebox{90}{\sf {\scriptsize {~~~~~~~~~~~~~~~~~~~~~~~~~~~Training Error}}}&\includegraphics[width=0.31\textwidth,height=0.3\textheight,  trim = 1.1cm 1.2cm 1cm 2cm, clip = true ]{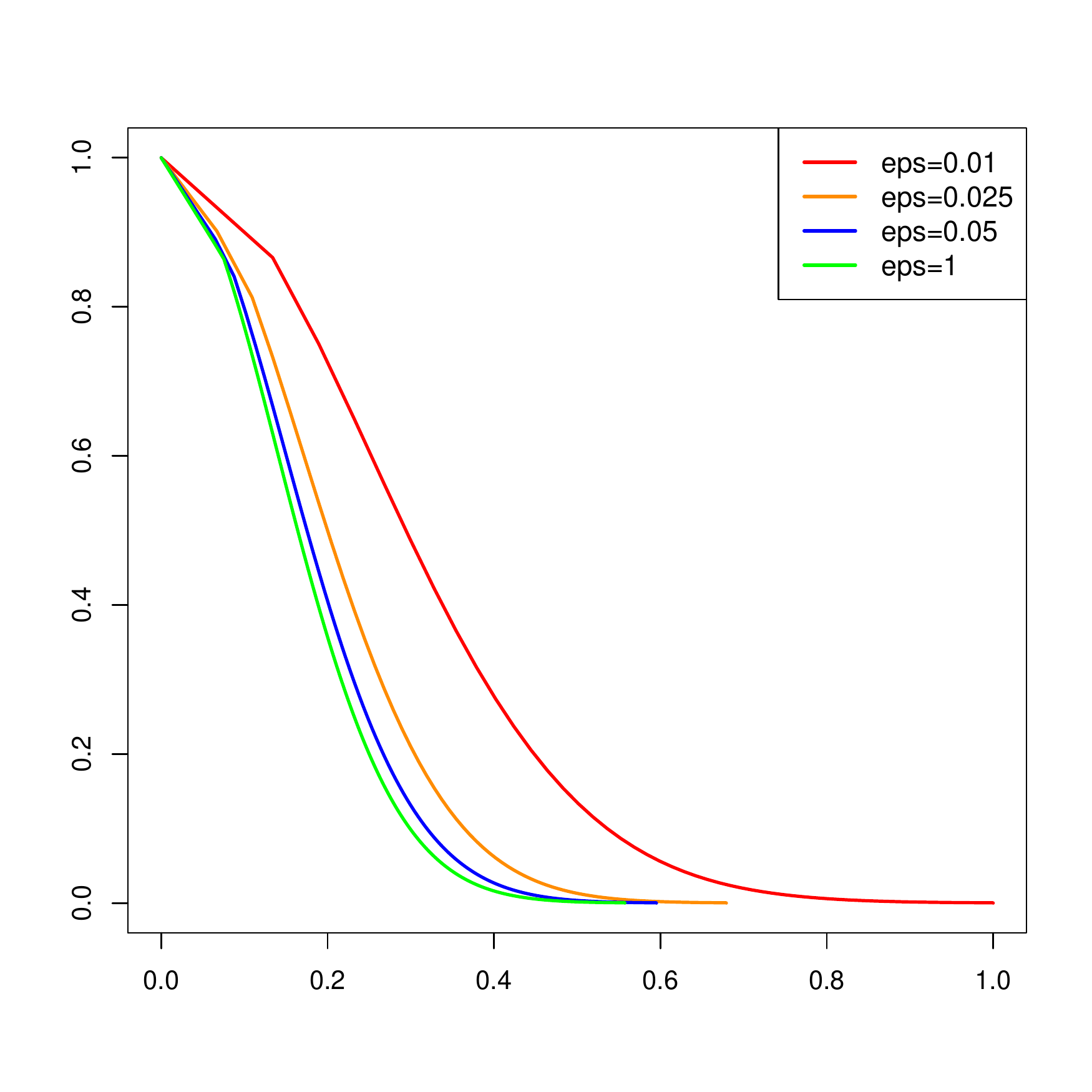}&
\includegraphics[width=0.31\textwidth,height=0.3\textheight,  trim = 2cm 1.2cm 1cm 2cm, clip = true ]{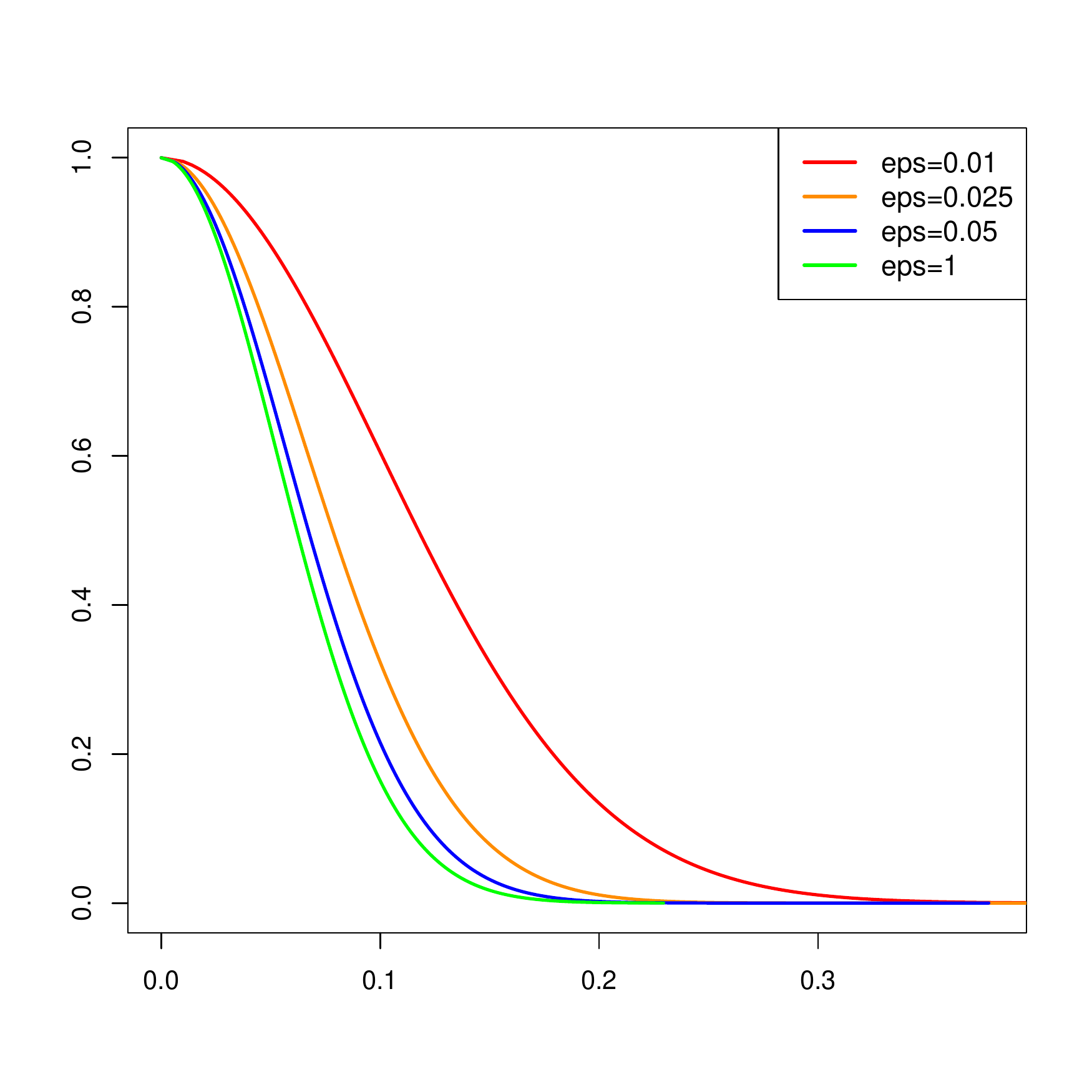}&
\includegraphics[width=0.31\textwidth,height=0.3\textheight,  trim = 2cm 1.2cm 1cm 2cm, clip = true ]{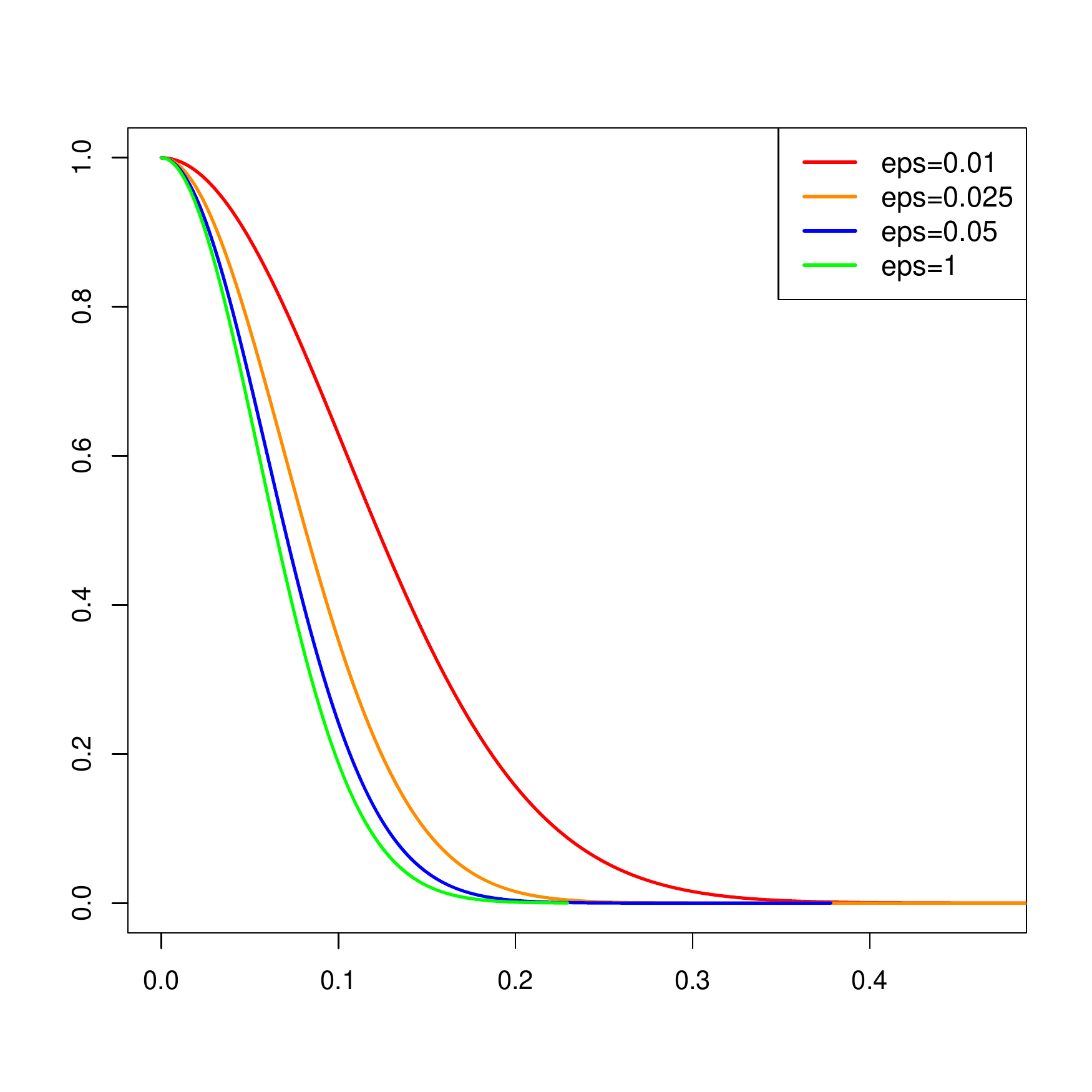} \\
&\scriptsize{\sf {$\ell_{1}$ shrinkage of coefficients}} &\scriptsize{\sf {$\ell_{1}$ shrinkage of coefficients}}  & \scriptsize{\sf {$\ell_{1}$ shrinkage of coefficients}}  \\
\end{tabular}}
\caption{{\small{Figure showing profiles of $\ell_{1}$ shrinkage of the regression coefficients versus training error for the \ebs algorithm, for different values of  the learning rate $\varepsilon$ (denoted by the moniker ``eps'' in the legend). The profiles have been obtained from the computational bounds in Theorem~\ref{august24-2}.  The left and middle panels correspond to synthetic values of the ratio $\kappa = \tfrac{p}{\lambda_{\text{pmin}}}$, and for the right panel profiles the value of $\kappa$ (here, $\kappa = 270.05$) is extracted from the
Leukemia dataset, described in Section~\ref{sec:experiments}. The vertical axes have been normalized so that the training error at $k=0$ is one, and the horizontal axes have been scaled to the unit interval.}} }\label{fig:ls}
\end{figure}

It is interesting to consider the profiles of Figure \ref{fig:ls} alongside the \emph{explicit} regularization framework of the \textsc{Lasso}~\eqref{poi-lasso} which also traces out a profile of the form~\eqref{pair-1}:
\begin{equation}\label{pair-1-lasso}
\left ( \frac{1}{2n}\| \by - \bX\hat{\beta}^*_{\delta} \|^2_{2} \ \ , \ \  \| \hat{\beta}^*_{\delta} \|_{1} \right),  \;\;\;\; \delta \geq 0 \ ,
\end{equation}
as a function of $\delta$, where, $\hat{\beta}^*_{\delta}$ is a solution to the \lasso problem~\eqref{poi-lasso}.
For a value of $\delta:= \ell_{k}$ the optimal objective value of the \lasso problem will serve as a lower bound of the corresponding \ebs loss function value at iteration $k$. Thus the training error of $\hat{\beta}^{k}$ delivered by the \ebs algorithm will be sandwiched between the following lower and upper bounds:
$$  L_{i,k}:= \frac{1}{2n}\| \by - \bX\hat{\beta}^*_{\ell_{k}} \|^2_{2}   \leq  \frac{1}{2n}\| \by - \bX\hat{\beta}^k \|^2_{2}  \leq \frac{1}{2n}\|\bX\hat{\beta}_{\text{LS}}\|^2_2 \cdot \gamma^{k} + L_{n}^*=: U_{i,k} $$
for every $k$. Note that the difference between the upper and lower bounds above, given by: $U_{i,k} - L_{i,k}$ converges to zero as $k \rightarrow \infty$.
Figure~\ref{fig:bounds-lasso} in Appendix~\ref{dance} shows the training error versus shrinkage profiles for \ebs and \lasso for different datasets.

\begin{figure}[h!]
\begin{center}
\scalebox{.9}[.8]{\begin{tabular}{ccccc}
\rotatebox{90}{\sf { {~~~~~~~~~~~~~~~~~~~~~~~~~~~~~~~\large{$\gamma$}}}}&\includegraphics[width=0.4\textwidth,height=0.3\textheight,  trim = 1.1cm 1.5cm 0cm 2cm, clip = true ]{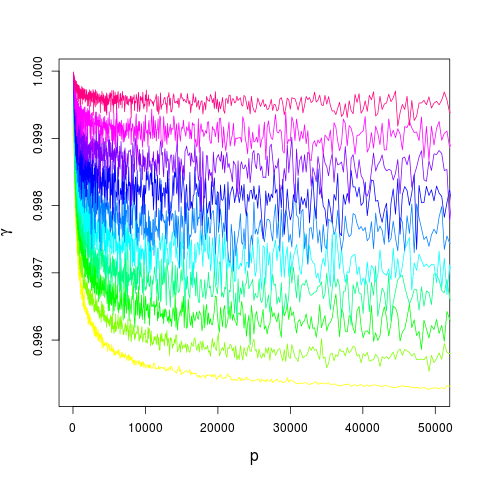}&&
\rotatebox{90}{\sf { {~~~~~~~~~~~~~~~~~~~~~~~~$\lambda_{\pmin}(\bX^T\bX)$}}}&\includegraphics[width=0.4\textwidth,height=0.3\textheight,   trim = 1.1cm 1.5cm 0cm 2cm, clip = true ]{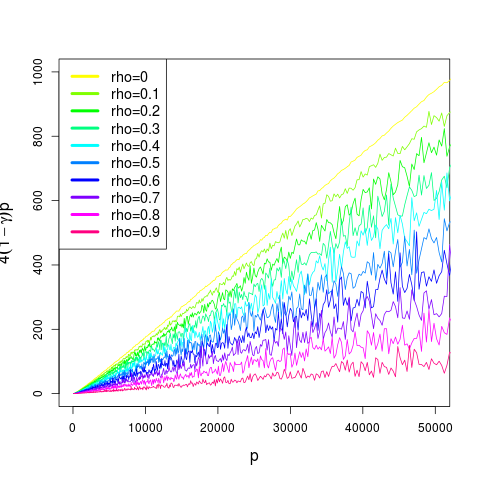}\\
&\sf $p$& &&\sf $p$ \\
\end{tabular}}
\end{center}
\caption{{\small{Figure showing the behavior of $\gamma$ [left panel] and $\lambda_{\text{pmin}}(\bX^T\bX)$ [right panel] for different values of $\rho$ (denoted by the moniker ``rho" in the legend) and $p$, with $\varepsilon=1$.
There are ten profiles in each panel corresponding to different values of $\rho$ for $\rho = 0, \ 0.1, \ \ldots, \ 0.9$.  Each profile documents the change
in $\gamma$ as a function of $p$.
 Here, the data matrix $\bX$ is comprised of
$n=50$ samples from a $p$-dimensional multivariate Gaussian distribution with mean zero, and all pairwise correlations equal to $\rho$, and the features are then standardized to have unit $\ell_{2}$ norm.
The left panel shows that $\gamma$ exhibits a phase of rapid decay (as a function of $p$) after which it stabilizes into the regime of \emph{fastest} convergence.
Interestingly, the behavior shows a monotone trend in $\rho$: the rate of progress of \ebs becomes slower for larger values of $\rho$ and faster for smaller values of $\rho$.}} }\label{fig:one-ls-boost-rate}
\end{figure}

For the bounds in parts {\em (i)} and {\em (iii)} of Theorem \ref{august24-2}, the asymptotic limits (as $k \rightarrow \infty$) are the unregularized least squares training error and predictions --- which are quantities that are uniquely defined even in the underdetermined case.

The bound in part {\em(ii)} of Theorem \ref{august24-2} is a statement concerning the regression coefficients. In this case, the notion of convergence needs to be appropriately modified from parts {\em (i)} and {\em (iii)}, since
the \emph{natural} limiting object $\hat{\beta}_{\text{LS}}$ is not necessarily unique.  In this case, perhaps not surprisingly, the regression coefficients $\hat{\beta}^{k}$ need not converge.
The result in part {\em(ii)} of the theorem states that $\hat{\beta}^k$ converges at a linear rate to the \emph{set} of least squares solutions.
In other words, at every \ebs boosting iteration, there exists a least squares solution $\hat{\beta}_{\text{LS}}^k$ for which the presented bound holds.  Here $\hat{\beta}_{\text{LS}}^k$ is in fact the closest least squares solution to $\hat{\beta}^k$ in the $\ell_{2}$ norm --- and the particular candidate least squares solution $\hat{\beta}_{\text{LS}}^k$ may be different for each iteration.

\paragraph{Interpreting the parameters and algorithm dynamics} There are several determinants of the quality of the bounds in the different parts of Theorem \ref{august24-2} which can be grouped into:
\begin{itemize}
\item algorithmic parameters: this includes the learning rate $\varepsilon$ and the number of iterations $k$, and
\item data dependent quantities: $\|\bX\hat{\beta}_{\text{LS}}\|_2$, $\lambda_{\pmin}(\bX^T\bX)$, and $p$.
\end{itemize}
The coefficient of linear convergence is given by the quantity $\gamma:=1-\frac{\varepsilon(2-\varepsilon)}{4\kappa(\bX^T\bX)}$, where $\kappa(\bX^T\bX):= \tfrac{p}{\lambda_{\text{pmin}}(\bX^T\bX)}$.
Note that $\gamma$ is monotone decreasing in $\varepsilon$ for $\varepsilon \in (0,1]$, and is minimized at $\varepsilon = 1$. This simple observation confirms
the general intuition about \ebs: $\varepsilon=1$ corresponds to the most aggressive model fitting behavior in the \ebs family, with smaller values of $\varepsilon$ corresponding to a slower model
fitting process.  The ratio $\kappa(\bX^T\bX)$ is a close cousin of the condition number associated with the data matrix $\bX$ --- and smaller values of $\kappa(\bX^T\bX)$ imply a faster rate of convergence.

In the overdetermined case with $n \ge p$ and $\rank(\bX) = p$, the condition number $\bar\kappa(\bX^T\bX) := \frac{\lambda_{\max}(\bX^T\bX)}{\lambda_{\min}(\bX^T\bX)}$ plays a key role in determining the stability of the least-squares solution $\hat{\beta}_{\text{LS}}$ and in measuring the degree of multicollinearity present. Note that $\bar\kappa(\bX^T\bX) \in [1, \infty)$, and that the problem is better conditioned for smaller values of this ratio. Furthermore, since $\rank(\bX) = p$ it holds that $\lambda_{\pmin}(\bX^T\bX) = \lambda_{\min}(\bX^T\bX)$, and thus $\bar\kappa(\bX^T\bX) \leq \kappa(\bX^T\bX)$ by \eqref{eigen_bound}. Thus the condition number $\kappa(\bX^T\bX)$ always upper bounds the classical condition number $\bar\kappa(\bX^T\bX)$, and if $\lambda_{\max}(\bX^T\bX)$ is close to $p$, then $\bar\kappa(\bX^T\bX) \approx \kappa(\bX^T\bX)$ and the two measures essentially coincide.
Finally, since in this setup $\hat{\beta}_{\text{LS}}$ is unique, part {\em (ii)} of Theorem \ref{august24-2} implies that the sequence $\{\hat{\beta}^k\}$ converges linearly to the unique least squares solution $\hat{\beta}_{\text{LS}}$.

In the underdetermined case with $p > n$, $\lambda_{\min}(\bX^T\bX) = 0$ and thus $\bar\kappa(\bX^T\bX) = \infty$. On the other hand, $\kappa(\bX^T\bX) < \infty$ since $\lambda_{\pmin}(\bX^T\bX)$ is the smallest \emph{nonzero} (hence positive) eigenvalue of $\bX^T\bX$.  Therefore the condition number $\kappa(\bX^T\bX)$ is similar to the classical condition number $\bar\kappa(\cdot)$ restricted to the subspace $\cal S$ spanned by the columns of $\bX$ (whose dimension is $\rank(\bX))$.
Interestingly, the linear rate of convergence enjoyed by \ebs is in a sense adaptive --- the algorithm automatically adjusts itself to the convergence rate dictated by the parameter $\gamma$ ``as if'' it knows that the null space of $\bX$ is not relevant.

\begin{figure}[h!]
\begin{center}
\scalebox{0.99}{\begin{tabular}{l c c c}
\multicolumn{4}{c}{ Dynamics of the \ebs algorithm versus number of boosting iterations \medskip}\\
& \small{$\rho=0$}  &\small{$\rho=0.5$}  & \small{$\rho=0.9$}   \\
\rotatebox{90}{\sf {\scriptsize {~~~~~~~~~~~~~~~Sorted Coefficient Indices}}}&\includegraphics[width=0.31\textwidth,height=0.25\textheight,  trim = 1.1cm 1.2cm 1cm 2cm, clip = true ]{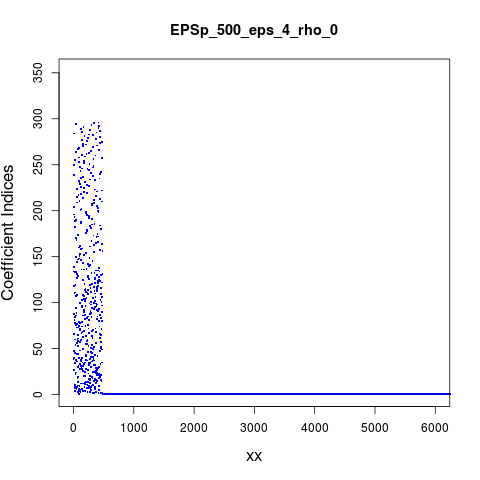}&
\includegraphics[width=0.31\textwidth,height=0.25\textheight,  trim = 2.1cm 1.2cm 1cm 2cm, clip = true ]{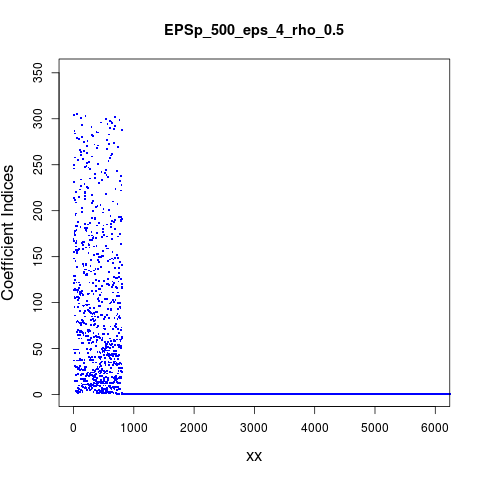}&
\includegraphics[width=0.31\textwidth,height=0.25\textheight,  trim = 2.1cm 1.2cm 1cm 2cm, clip = true ]{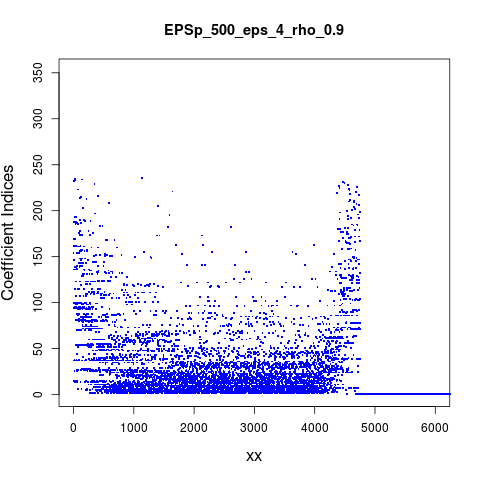}  \\
&\scriptsize{\sf Number of Boosting Iterations} &\scriptsize{\sf Number of Boosting Iterations}&\scriptsize{\sf Number of Boosting Iterations}
\end{tabular}}
\caption{{\small{Showing the \ebs algorithm run on the same synthetic dataset as was used in Figure~\ref{fig:bounds-lasso}, with $p=500$ and $\varepsilon = 1$, for three different values of the pairwise correlation $\rho$. A point is ``on'' if the corresponding regression coefficient is updated at iteration $k$. Here the vertical axes have been reoriented so that
the coefficients that are updated the maximum number of times appear lower on the axes.  For larger values of $\rho$, we see that the \ebs algorithm aggressively updates the coefficients for a large number of iterations, whereas the dynamics of the algorithm for smaller values of $\rho$ are less pronounced.  For larger values of $\rho$ the \ebs algorithm takes longer to reach the least squares fit and this is reflected in the above figure from the update patterns in the regression coefficients.  The dynamics of the algorithm evident in this figure nicely complements the insights gained from Figure~\ref{fig:fse-ebs-loss-1}.  }} }\label{fig:change-feat}
\end{center}
\end{figure}

As the dataset is varied, the value of $\gamma$ can change substantially from one dataset to another, thereby leading to differences in the convergence behavior bounds in parts {\em (i)-(v)} of Theorem \ref{august24-2}.  To settle all of these ideas, we can derive some simple bounds on $\gamma$ using tools from random matrix theory.
Towards this end, let us suppose that the entries of $\bX$ are drawn from a standard Gaussian ensemble, which are subsequently standardized such that every column of $\bX$ has unit $\ell_{2}$ norm.
Then it follows from random matrix theory \cite{vershynin2010introduction} that $\lambda_{\pmin}(\bX^T\bX) \gtrapprox \frac{1}{n}(    \sqrt{p} - \sqrt{n}  )^2$ with high probability. (See Appendix \ref{random_matrix} for a more detailed discussion of this fact.)  To gain better insights into the behavior of $\gamma$ and how it depends on the values of pairwise correlations of the features, we performed some computational experiments, the results of which are shown in Figure~\ref{fig:one-ls-boost-rate}.
Figure~\ref{fig:one-ls-boost-rate} shows the behavior of $\gamma$ as a function of $p$ for a fixed $n=50$ and $\varepsilon=1$, for different datasets $\bX$ simulated as follows.
We first generated a multivariate data matrix from a Gaussian distribution with mean zero and covariance $\Sigma_{p \times p}=(\sigma_{ij})$, where, $\sigma_{ij}  = \rho$ for all $i \neq j$; and then all of the columns of the data matrix were
standardized to have unit $\ell_{2}$ norm. The resulting matrix was taken as $\bX$.  We considered different cases by varying the magnitude of pairwise correlations of the features $\rho$ --- when $\rho$ is small, the rate of convergence is typically faster (smaller $\gamma$) and the rate becomes slower (higher $\gamma$) for higher values of $\rho$.
Figure~\ref{fig:one-ls-boost-rate} shows that the coefficient of linear convergence $\gamma$ is quite close to $1.0$ --- which suggests a slowly converging algorithm and confirms our intuition about the algorithmic behavior of \ebs.  Indeed, \ebs, like any other boosting algorithm, should indeed converge slowly to the unregularized least squares solution.  The slowly converging nature of the \ebs algorithm provides, for the first time, a precise theoretical justification of the empirical observation made in~\cite{ESLBook} that
stagewise regression is widely considered ineffective as a tool to obtain the unregularized least squares fit, as compared to other stepwise model fitting procedures
like Forward Stepwise regression (discussed in Section~\ref{sect_intro}).

The above discussion sheds some interesting insight into the behavior of the \ebs\ algorithm. For larger values of $\rho$, the observed covariates tend to be even more highly correlated (since $p \gg n$). Whenever  a pair of features are highly correlated,
the \ebs algorithm finds it \emph{difficult} to prefer one over the other and thus takes turns in updating both coefficients, thereby distributing the effects of a covariate to all of its correlated cousins.  Since a group of correlated covariates are all competing
to be updated by the \ebs algorithm, the progress made by the algorithm in decreasing the loss function is naturally slowed down.
In contrast,  when $\rho$ is small, the \ebs algorithm brings in a covariate and in a sense completes the process by doing the exact line-search on that feature.  This heuristic explanation attempts to explain the slower rate of convergence of the \ebs algorithm for large values of $\rho$ --- a phenomenon that we observe in practice and which is also substantiated by the computational guarantees in Theorem \ref{august24-2}.  We refer the reader to Figures~\ref{fig:fse-ebs-loss-1} and \ref{fig:change-feat} which further illustrate the above justification.
Statement {\em (v)} of Theorem \ref{august24-2} provides upper bounds on the $\ell_{1}$ shrinkage of the coefficients. Figure \ref{fig:ls} illustrates the evolution of the data-fidelity versus $\ell_{1}$-shrinkage as obtained from the computational bounds in Theorem~\ref{august24-2}.
Some additional discussion and properties of \ebs are presented in Appendix~\ref{sec-add-results-ebs}.

\section{Boosting Algorithms as Subgradient Descent}\label{sect_subgrad}
\paragraph{Roadmap} In this section we present a new unifying framework for interpreting the three boosting algorithms that were discussed in Section \ref{sect_intro}, namely $\FSe$, its non-uniform learning rate extension $\FSek$, and \textsc{LS-Boost}$(\varepsilon)$.  We show herein that all three algorithmic families can be interpreted as instances of the subgradient descent method of convex optimization, applied to the problem of minimizing the largest correlation between residuals and predictors.  Interestingly, this unifying lens will also result in a natural generalization of $\FSe$  with very strong ties to the \lasso solutions, as we will present in Sections~\ref{sect_lasso} and~\ref{dynamic_sect}.  The framework presented in this section leads to convergence guarantees for $\FSe$ and $\FSek$.  In Theorem \ref{December17} herein, we present a theoretical description of the
evolution of the $\FSe$ algorithm, in terms of its data-fidelity and shrinkage guarantees as a function of the number of boosting iterations. These results are a consequence of
the computational guarantees for $\FSe$ that inform us about the rate at which the $\FSe$ training error, regression coefficients, and predictions make their way to their least squares counterparts.  In order to develop these results, we first motivate and briefly review the subgradient descent method of convex optimization.

\subsection{Brief Review of Subgradient Descent}
We briefly motivate and review the subgradient descent method for non-differentiable convex optimization problems. Consider the following optimization problem:
\begin{equation}\label{poi1}
\begin{array}{rccl}
f^* := & \min\limits_{\x} & f(\x)\\
&\mathrm{s.t.} & \x \in P \ ,
\end{array}
\end{equation}\medskip
where $P \subseteq \mathbb{R}^n$ is a closed convex set and $f(\cdot) : P \to \mathbb{R}$ is a convex function. If $f(\cdot)$ is differentiable, then $f(\cdot)$ will satisfy the following gradient inequality:
$$ f(\y) \ge f(\x) + \nabla f(\x)^T(\y - \x) \ \ \ \mathrm{for~any~} \x,\y \in P \ , $$
which states that $f(\cdot)$ lies above its first-order (linear) approximation at $\x$. One of the most intuitive optimization schemes for solving \eqref{poi1} is the method of gradient descent.  This method is initiated at a given point $\x^0 \in P$.  If $\x^k$ is the current iterate, then the next iterate is given by the update formula:  $\x^{k+1} \gets \Pi_P(\x^k - \alpha_k \nabla f(\x^k))$.  In this method the potential new point is $\x^k - \alpha_k \nabla f(\x^k)$, where $\alpha_k >0$ is called the step-size at iteration $k$, and the step is taken in the direction of the negative of the gradient.  If this potential new point lies outside of the feasible region $P$, it is then projected back onto $P$.  Here recall that $\Pi_P(\cdot)$ is the Euclidean projection operator, namely $\Pi_P(\x) := \arg\min_{\y \in P} \|\x-\y\|_2$.\medskip

Now suppose that $f(\cdot)$ is not differentiable.  By virtue of the fact that $f(\cdot)$ is convex, then $f(\cdot)$ will have a {\em subgradient} at each point $\x$.  Recall that $g$ is a subgradient of $f(\cdot)$ at $\x$ if the following subgradient inequality holds:  \begin{equation}\label{jeff}f(\y) \ge f(\x) + g^T(\y - \x)  \ \ \ \mathrm{for~all~} \y \in P \ , \end{equation} which generalizes the gradient inequality above and states that $f(\cdot)$ lies above the linear function on the right side of \eqref{jeff}.  Because there may exist more than one subgradient of $f(\cdot)$ at $\x$, let $\partial f(\x)$ denote the set of subgradients of $f(\cdot)$ at $\x$.  Then ``$g \in \partial f(\x)$'' denotes that $g$ is a subgradient of $f(\cdot)$ at the point $\x$, and so $g$ satisfies \eqref{jeff} for all $\y$.  The subgradient descent method (see \cite{ShorBook}, for example) is a simple generalization of the method of gradient descent to the case when $f(\cdot)$ is not differentiable.  One simply replaces the gradient by the subgradient, yielding the following update scheme:

\begin{equation}\label{subgrad_update2}
\begin{array}{lcl}\mathrm{Compute~a~subgradient~of~} f(\cdot) \ \mathrm{at~}\x^k \ & : & \  g^k \in \partial f(\x^k) \\
\mathrm{Peform~update~at~}\x^k \ & : & \  \x^{k+1} \leftarrow \Pi_P (\x^k -\alpha_k g^k) \ .\end{array}
\end{equation}

The following proposition summarizes a well-known computational guarantee associated with the subgradient descent method.\medskip

\begin{proposition}{\bf{(Convergence Bound for Subgradient Descent}\cite{polyak, nesterovBook})}\label{subgrad}
Consider the subgradient descent method \eqref{subgrad_update2}, using a constant step-size $\alpha_{i} = \alpha$ for all $i$.  Let $\x^*$ be an optimal solution of \eqref{poi1} and suppose that the subgradients are uniformly bounded, namely $\|g^i\|_2 \leq G$ for all $i \geq 0$. Then for each $k \geq 0$, the following inequality holds:
\begin{equation}\label{sd_bound1}
\min_{i \in \{0,\ldots,k\}} f(\x^i)  \ \ \leq \ \ f^* \ + \ \frac{\|\x^0-\x^*\|_2^2}{2(k+1)\alpha} + \frac{\alpha G^2}{2} \ . \ \qed
\end{equation}
\end{proposition}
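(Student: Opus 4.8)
The plan is to track the squared Euclidean distance from the iterates to the optimal solution $x^*$ and to show that it decreases by a controllable amount at each step. First I would exploit the fact that $x^* \in P$ together with the non-expansiveness of the Euclidean projection operator onto the convex set $P$, namely $\|\Pi_P(z) - x^*\|_2 \le \|z - x^*\|_2$ for any $z$, applied with $z = x^k - \alpha g^k$. This removes the projection from the analysis, since $x^{k+1} = \Pi_P(x^k - \alpha g^k)$, and yields
$$\|x^{k+1} - x^*\|_2^2 \;\le\; \|x^k - \alpha g^k - x^*\|_2^2 \ .$$

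Next I would expand the right-hand side as $\|x^k - x^*\|_2^2 - 2\alpha (g^k)^T(x^k - x^*) + \alpha^2\|g^k\|_2^2$ and bound its last two terms. For the cross term, the subgradient inequality \eqref{jeff} evaluated at $\y = x^*$ and $\x = x^k$ gives $(g^k)^T(x^k - x^*) \ge f(x^k) - f(x^*) = f(x^k) - f^*$; for the final term I would invoke the uniform bound $\|g^k\|_2^2 \le G^2$. Together these produce the fundamental per-iteration inequality
$$\|x^{k+1} - x^*\|_2^2 \;\le\; \|x^k - x^*\|_2^2 - 2\alpha\big(f(x^k) - f^*\big) + \alpha^2 G^2 \ .$$

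The remaining steps are routine algebra. I would sum this inequality over $i = 0, \ldots, k$ so that the distance terms telescope, and then discard the nonnegative terminal term $\|x^{k+1} - x^*\|_2^2 \ge 0$ to obtain $2\alpha\sum_{i=0}^{k}(f(x^i) - f^*) \le \|x^0 - x^*\|_2^2 + (k+1)\alpha^2 G^2$. Finally, lower-bounding each summand by the smallest objective value encountered, $\sum_{i=0}^k (f(x^i) - f^*) \ge (k+1)\big(\min_{i \in \{0,\ldots,k\}} f(x^i) - f^*\big)$, and dividing through by $2\alpha(k+1)$ delivers the claimed bound \eqref{sd_bound1}.

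I do not anticipate a genuine obstacle, as this is the textbook subgradient analysis; the one conceptual ingredient doing all the work is the non-expansiveness of $\Pi_P$, which lets us treat the unprojected step $x^k - \alpha g^k$ as if it were the true iterate when measuring progress toward $x^*$. The only mild subtlety worth flagging is that, unlike gradient descent on a smooth function, there is no guaranteed monotone decrease of $f(x^k)$: the guarantee is necessarily phrased in terms of the running minimum $\min_{i \le k} f(x^i)$, and the residual term $\tfrac{\alpha G^2}{2}$ does not vanish for fixed $\alpha$, reflecting the fact that constant-stepsize subgradient descent converges only to an $O(\alpha)$-neighborhood of optimality.
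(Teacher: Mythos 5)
Your proof is correct and follows essentially the same route as the paper's: expand $\|\x^{i+1}-\x^*\|_2^2$, bound the cross term via the subgradient inequality, telescope, and pass to the running minimum (the paper merely factors the telescoping computation into a separate ``elementary sequence process'' lemma, Proposition~\ref{simpleprop}, before substituting the subgradient inequality). If anything, your explicit appeal to the non-expansiveness of $\Pi_P$ is slightly more careful than the paper's argument, whose intermediate lemma is stated for the unprojected recursion $\x^{i+1}=\x^i-\alpha_i g^i$.
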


The left side of \eqref{sd_bound1} is simply the best objective function value obtained among the first $k$ iterations.  The right side of \eqref{sd_bound1} bounds the best objective function value from above, namely the optimal value $f^*$ plus a nonnegative quantity that is a function of the number of iterations $k$, the constant step-size $\{ \alpha_i\}$, the bound $G$ on the norms of subgradients, and the distance from the initial point to an optimal solution $\x^*$ of \eqref{poi1}.  Note that for a fixed step-size $\alpha>0$, the right side of \eqref{sd_bound1} goes to $\frac{\alpha G^2}{2}$ as $k \rightarrow \infty$.  In the interest of completeness, we include a proof of Proposition \ref{subgrad} in Appendix \ref{simple}.

\subsection{A Subgradient Descent Framework for Boosting}\label{boostingassubgrad}

We now show that the boosting algorithms discussed in Section~\ref{sect_intro}, namely $\FSe$ and its relatives $\FSek$ and \textsc{LS-Boost}$(\varepsilon)$,
can all be interpreted as instantiations of the subgradient descent method to minimize the largest absolute correlation between the residuals and predictors.\medskip

Let $P_{\mathrm{res}}:= \{r \in \mathbb{R}^n : r=\by-\bX\beta \ \mathrm{for~some~} \beta \in \mathbb{R}^p\}$ denote the affine space of residuals and consider the following convex optimization problem:
\begin{equation}\label{FS-problem}
\begin{array}{lccll}
\mathrm{Correlation~Minimization~(CM)}: \ \ \ f^\ast := & \min\limits_{r} & f(r) \ \ := & \|\bX^Tr\|_\infty\\
&\mathrm{s.t.} & r \in P_{\mathrm{res}} \ , \  & \ \ \ \ \ \ \ \ \ \ \ \ \ \ \ \ \ \ \ \ \ \ \
\end{array}
\end{equation}
which we dub the ``Correlation Minimization'' problem, or CM for short.  Note an important subtlety in the CM problem, namely that the optimization variable in CM is the {\em residual} $r$ and \emph{not} the regression coefficient vector $\beta$.\medskip

Since the columns of $\bX$ have unit $\ell_2$ norm by assumption, $f(r)$ is the largest absolute correlation between the residual vector $r$ and the predictors.  Therefore \eqref{FS-problem} is the convex optimization problem of minimizing the largest correlation between the residuals and the predictors, over all possible values of the residuals.
From~\eqref{grad2} with $r=\by-\bX\beta$ we observe that $\bX^Tr = 0$ if and only if $\beta$ is a least squares solution, whereby $f(r)=\|\bX^Tr\|_\infty = 0$ for the
 least squares residual vector $r = \hat r_{\text{LS}} = \by -\bX \hat{\beta}_{\text{LS}}$.  Since the objective function in~\eqref{FS-problem} is nonnegative, we conclude that $f^*=0$ and the least squares residual vector $\hat r_{\text{LS}}$ is also the unique optimal solution of the CM problem \eqref{FS-problem}. Thus CM can be viewed as an optimization problem which also produces the least squares solution.

The following proposition states that the three boosting algorithms
$\FSe$, $\FSek$ and \textsc{LS-Boost}$(\varepsilon)$ can all be viewed as instantiations of the subgradient descent method to solve the CM problem \eqref{FS-problem}.\medskip

\begin{proposition}\label{FSequiv}
Consider the subgradient descent method \eqref{subgrad_update2} with step-size sequence $\{\alpha_k\}$ to solve the correlation minimization (CM) problem \eqref{FS-problem}, initialized at $\hat r^0 = \by$. Then:
\begin{itemize}
\item[{(i)}] the $\FSe$ algorithm is an instance of subgradient descent, with a constant step-size $\alpha_k := \varepsilon$ at each iteration,

\item[{(ii)}] the $\FSek$ algorithm is an instance of subgradient descent, with non-uniform step-sizes $\alpha_k := \varepsilon_{k}$ at iteration $k$, and

\item[{(iii)}] the \textsc{LS-Boost}$(\varepsilon)$ algorithm is an instance of subgradient descent, with non-uniform step-sizes $\alpha_{k}:= \varepsilon |\tilde{u}_{j_k}|$ at iteration $k$, where $\tilde{u}_{j_k} := \argmin_{u} \|  \hat r^{k} - \M{X}_{j_k}u \|_{2}^2$.
\end{itemize}
\end{proposition}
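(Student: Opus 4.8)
The plan is to instantiate the subgradient descent recursion \eqref{subgrad_update2} on the CM problem \eqref{FS-problem} and show that, once the projection step is simplified, it reproduces exactly the residual and coefficient updates of all three algorithms under the three prescribed step-size choices. There are three ingredients: computing a subgradient of $f(r)=\|\bX^Tr\|_\infty$, showing that the Euclidean projection onto $P_{\mathrm{res}}$ acts trivially, and matching the step-sizes.

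First I would compute a subgradient of $f$ at the current iterate $\hat r^k$. Writing $f(r)=\max_{1\le j\le p}\max_{s\in\{-1,+1\}} s\,(r^T\bX_j)$ exhibits $f$ as a pointwise maximum of the affine functions $r\mapsto s\,(r^T\bX_j)$, each having gradient $s\,\bX_j$. The subdifferential of such a maximum is the convex hull of the gradients of the active pieces, so any active pair $(j,s)$ furnishes a subgradient. Taking $j_k\in\argmax_j|(\hat r^k)^T\bX_j|$ and $s_k=\sgn((\hat r^k)^T\bX_{j_k})$ yields the valid choice
$$ g^k \;=\; s_k\,\bX_{j_k} \;=\; \sgn\!\big((\hat r^k)^T\bX_{j_k}\big)\,\bX_{j_k}\;\in\;\partial f(\hat r^k)\ . $$
Note this $j_k$ is precisely the index selected in Step (2.) of $\FSe$, and by the remark surrounding \eqref{choose-jk-1} it coincides with the index selected by \ebs as well.

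The key structural observation is that the projection in \eqref{subgrad_update2} is vacuous here. The feasible set is the affine subspace $P_{\mathrm{res}}=\by+\mathrm{col}(\bX)$ whose parallel linear subspace is the column space $\mathrm{col}(\bX)$ of $\bX$. Since $g^k=s_k\,\bX_{j_k}$ is a scalar multiple of a column of $\bX$, it lies in $\mathrm{col}(\bX)$; hence if $\hat r^k\in P_{\mathrm{res}}$ then $\hat r^k-\alpha_k g^k\in P_{\mathrm{res}}$ as well, so $\Pi_{P_{\mathrm{res}}}(\hat r^k-\alpha_k g^k)=\hat r^k-\alpha_k g^k$. The subgradient recursion therefore collapses to the projection-free update
$$ \hat r^{k+1}=\hat r^k-\alpha_k\, s_k\,\bX_{j_k}\ , $$
and, maintaining the invariant $\hat r^k=\by-\bX\hat\beta^k$ (valid at initialization since $\hat r^0=\by$ and $\hat\beta^0=0$), the corresponding coefficient update is $\hat\beta^{k+1}=\hat\beta^k+\alpha_k\, s_k\, e_{j_k}$.

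Finally I would read off the three cases by specializing the step-size. For $\alpha_k=\varepsilon$ these are exactly Step (3.) of $\FSe$, giving (i); for $\alpha_k=\varepsilon_k$ they are exactly the updates of $\FSek$, giving (ii). For (iii), recall from \eqref{choose-jk-1} that $\tilde u_{j_k}=(\hat r^k)^T\bX_{j_k}$, so $(\hat r^k)^T\bX_{j_k}=s_k\,|\tilde u_{j_k}|$; taking $\alpha_k=\varepsilon|\tilde u_{j_k}|$ gives $\alpha_k s_k\bX_{j_k}=\varepsilon\big((\hat r^k)^T\bX_{j_k}\big)\bX_{j_k}$, which is precisely the \ebs residual update \eqref{ebs_res_update} together with its matching coefficient update. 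I expect the only genuinely delicate point to be the second ingredient---justifying that the subgradient always lies in the direction space of $P_{\mathrm{res}}$, so that the projection reduces to the identity; the subgradient computation and the step-size bookkeeping are then routine verifications.
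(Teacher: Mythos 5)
Your proposal is correct and follows essentially the same route as the paper's proof: identify $\sgn((\hat r^k)^T\bX_{j_k})\bX_{j_k}$ as a subgradient of $\|\bX^Tr\|_\infty$ at $\hat r^k$, observe that the projection onto $P_{\mathrm{res}}$ is superfluous because the update stays in that affine set, and then read off the three step-size choices. The only cosmetic difference is that you invoke the max-of-affine-functions rule for the subdifferential, whereas the paper verifies the subgradient inequality \eqref{jeff} directly by a short chain of inequalities; both establish the same fact.
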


\begin{proof}  We first prove  {\em (i)}.  Recall the update of the residuals in $\FSe$:
\begin{equation*}
\hat r^{k+1} = \hat{r}^k - \varepsilon \cdot \sgn((\hat{r}^k)^T\bX_{j_k})\bX_{j_k} \ .
\end{equation*}
We first show that $g^k := \sgn((\hat{r}^k)^T\bX_{j_k})\bX_{j_k}$ is a subgradient of the objective function $f(r) = \|\bX^Tr\|_\infty$ of the correlation minimization problem CM  \eqref{FS-problem} at $r = \hat{r}^k$.  At iteration $k$,  $\FSe$ chooses the coefficient to update by selecting $j_k \in \argmax\limits_{j \in \{1, \ldots, p\}}|(\hat{r}^k)^T\bX_j|$, whereby \\ $\sgn((\hat{r}^k)^T\bX_{j_k})\left((\hat{r}^k)^T\bX_{j_k}\right)= \|\bX^T(\hat{r}^k)\|_\infty$, and therefore for any $r$ it holds that:
$$\begin{array}{rcl} f(r) = \| \bX^Tr\|_\infty &\ge& \sgn((\hat{r}^k)^T\bX_{j_k})\left((\bX_{j_k})^Tr \right) \\ \\ &=& \sgn((\hat{r}^k)^T\bX_{j_k})\left((\bX_{j_k})^T(\hat{r}^k + r - \hat{r}^k) \right) \\ \\ &=&  \|\bX^T(\hat{r}^k)\|_\infty + \sgn((\hat{r}^k)^T\bX_{j_k})\left((\bX_{j_k})^T(r - \hat{r}^k) \right) \\ \\ &=& f(\hat{r}^k)  + \sgn((\hat{r}^k)^T\bX_{j_k})\left((\bX_{j_k})^T(r - \hat{r}^k)  \right) \ . \end{array}$$
Therefore using the definition of a subgradient in \eqref{jeff}, it follows that $g^k := \sgn((\hat{r}^k)^T\bX_{j_k})\bX_{j_k}$ is a subgradient of  $f(r) = \|\bX^Tr\|_\infty$ at $r = \hat{r}^k$.  Therefore the update $\hat r^{k+1} = \hat{r}^k - \varepsilon \cdot \sgn((\hat{r}^k)^T\bX_{j_k})\bX_{j_k} $ is of the form $\hat r^{k+1} = \hat{r}^k - \varepsilon g^k$ where $g^k \in \partial f(\hat{r}^k)$.  Last of all notice that the update can also be written as  $\hat{r}^k - \varepsilon g^k = \hat r^{k+1} = \by - \bX\hat{\beta}^{k+1} \in P_{\mathrm{res}}$, hence $\Pi_{P_{\mathrm{res}}}(\hat{r}^k - \varepsilon g^k) = \hat{r}^k - \varepsilon g^k$, i.e., the projection step is superfluous here, and therefore $\hat r^{k+1} = \Pi_{P_{\mathrm{res}}}(\hat{r}^k - \varepsilon g^k)$, which is precisely the update for the subgradient descent method with step-size $\alpha_k := \varepsilon$.

The proof of {\em (ii)} is the same as {\em (i)} with a step-size choice of $\alpha_k = \varepsilon_{k}$ at iteration $k$.
Furthermore, as discussed in Section \ref{sect_intro}, \textsc{LS-Boost}$(\varepsilon)$ may be thought of as a specific instance of $\FSek$, whereby the proof of {\em (iii)} follows as a special case of {\em (ii)}.\end{proof}

Proposition~\ref{FSequiv} presents a new interpretation of the boosting algorithms $\FSe$ and its cousins as subgradient descent. This is interesting especially since $\FSe$ and \ebs have been traditionally
interpreted as greedy coordinate descent or steepest descent type procedures \cite{ESLBook,Friedman00greedyfunction}. This has the following consequences of note:
\begin{itemize}
\item We take recourse to existing tools and results about subgradient descent optimization to inform us about the computational guarantees of these methods.  When translated to the setting of linear regression, these results will shed light on the data fidelity {\it vis-\`{a}-vis}  shrinkage characteristics of $\FSe$ and its cousins --- all using quantities that can be easily obtained prior to running the boosting algorithm.  We will show the details of this in Theorem \ref{December17} below.

\item The subgradient optimization viewpoint provides a unifying algorithmic theme which we will also apply to a regularized version of problem CM~\eqref{FS-problem}, and that we will show is very strongly connected to the \lassoperiod.  This will be developed in Section \ref{sect_lasso}. Indeed, the regularized version of the CM problem that we will develop in Section~\ref{sect_lasso} will lead to a new family of boosting algorithms which are a seemingly minor variant of the basic $\FSe$ algorithm but deliver ($O(\varepsilon)$-approximate) solutions to the \lassoperiod.
\end{itemize}

\subsection{Deriving and Interpreting Computational Guarantees for $\FSe$}
The following theorem presents the convergence properties of $\FSe$, which are a consequence of the interpretation of $\FSe$ as an instance of the subgradient descent method.\medskip

\begin{theorem}\label{December17} {\bf (Convergence Properties of $\FSe$)}  Consider the $\FSe$ algorithm with learning rate $\varepsilon$. Let $k \geq 0$ be the total number of iterations. Then there exists an index $i \in \{0, \ldots, k\}$ for which the following bounds hold:
\begin{itemize}
\item[(i)] (training error): $L_n(\hat{\beta}^i) - L_n^* \leq \frac{p}{2n\lambda_{\pmin}(\bX^T\bX)}\left[\frac{\|\bX\hat{\beta}_{\text{LS}}\|_2^2}{\varepsilon(k+1)}+ \varepsilon \right]^2 $
\item[(ii)] (regression coefficients): there exists a least squares solution $\hat{\beta}^i_{LS}$ such that:
 $$\|\hat{\beta}^i-\hat{\beta}^i_{LS}\|_2 \le \frac{\sqrt{p}}{\lambda_{\pmin}(\bX^T\bX)}\left[\frac{\|\bX\hat{\beta}_{\text{LS}}\|_2^2}{\varepsilon(k+1)}+ \varepsilon \right]$$
\item[(iii)] (predictions): for every least-squares solution $\hat{\beta}_{\text{LS}}$ it holds that $$\|\bX\hat{\beta}^i - \bX \hat{\beta}_{\text{LS}}\|_2 \le \frac{\sqrt{p}}{\sqrt{\lambda_{\pmin}(\bX^T\bX)}}\left[\frac{\|\bX\hat{\beta}_{\text{LS}}\|_2^2}{\varepsilon(k+1)}+ \varepsilon \right]$$
\item[(iv)] (correlation values) $\|\bX^T\hat r^i\|_\infty \le \displaystyle\frac{\|\bX\hat{\beta}_{\text{LS}}\|_2^2}{2\varepsilon(k+1)}+ \frac{\varepsilon}{2}$
\item[(v)] ($\ell_{1}$-shrinkage of coefficients): $\|\hat{\beta}^i\|_1 \le k \varepsilon$
\item[(vi)] (sparsity of coefficients): $\|\hat{\beta}^i\|_0 \le k$ . \ \qed
\end{itemize}
\end{theorem}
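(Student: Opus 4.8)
The plan is to leverage the identification of $\FSe$ as subgradient descent on the correlation minimization problem CM \eqref{FS-problem}, established in Proposition \ref{FSequiv}(i), together with the generic convergence bound of Proposition \ref{subgrad}. The starting observation is that every constant appearing in Proposition \ref{subgrad} is available in closed form here: the step-size is $\alpha = \varepsilon$; the optimal value is $f^\ast = 0$ since the least-squares residual $\hat r_{\text{LS}}$ solves CM; the initial point is $\hat r^0 = \by$, so that $\|\hat r^0 - \hat r_{\text{LS}}\|_2^2 = \|\bX\hat\beta_{\text{LS}}\|_2^2$ by orthogonality of $\hat r_{\text{LS}}$ to the column space of $\bX$; and each subgradient $g^k = \sgn((\hat r^k)^T\bX_{j_k})\bX_{j_k}$ satisfies $\|g^k\|_2 \le 1$ because the columns of $\bX$ have unit $\ell_2$ norm, giving $G = 1$.

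First I would prove part (iv), which is the cornerstone from which the index $i$ is selected. Substituting the above constants into \eqref{sd_bound1} gives directly
$$\min_{i \in \{0,\ldots,k\}} \|\bX^T\hat r^i\|_\infty \le \frac{\|\bX\hat\beta_{\text{LS}}\|_2^2}{2\varepsilon(k+1)} + \frac{\varepsilon}{2},$$
and I would fix $i$ to be the index attaining this minimum; this same $i$ is then used in all remaining parts. Next I would establish parts (i)--(iii) by relating the training error and the predictions back to the correlation value $\|\bX^T\hat r^i\|_\infty$ through the geometry of the least-squares quadratic. Writing $d := \hat\beta_{\text{LS}} - \hat\beta^i$ and using the orthogonal decomposition $\|\hat r^i\|_2^2 = \|\hat r_{\text{LS}}\|_2^2 + \|\bX d\|_2^2$ (again from $\bX^T\hat r_{\text{LS}} = 0$), one has $2n(L_n(\hat\beta^i) - L_n^\ast) = \|\bX d\|_2^2$. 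The key inequality is $\|\bX d\|_2 \le \|\bX^T\hat r^i\|_2/\sqrt{\lambda_{\pmin}(\bX^T\bX)}$, obtained by restricting attention to the component of $d$ lying in the range of $\bX^T\bX$ and invoking the smallest nonzero eigenvalue; combining this with the norm conversion $\|\bX^T\hat r^i\|_2 \le \sqrt{p}\,\|\bX^T\hat r^i\|_\infty$ and substituting the bound from part (iv) yields part (iii) for the predictions and, upon squaring, part (i) for the training error, with the factor $p/\lambda_{\pmin}(\bX^T\bX)$ emerging precisely from these two steps (the stated constants being mildly conservative).

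Part (ii) then follows from part (iii): choosing $\hat\beta_{\text{LS}}^i$ to be the least-squares solution closest to $\hat\beta^i$ in $\ell_2$ makes $\hat\beta^i - \hat\beta_{\text{LS}}^i$ orthogonal to the null space of $\bX$, so that $\|\hat\beta^i - \hat\beta_{\text{LS}}^i\|_2 \le \|\bX(\hat\beta^i - \hat\beta_{\text{LS}}^i)\|_2/\sqrt{\lambda_{\pmin}(\bX^T\bX)} = \|\bX\hat\beta^i - \bX\hat\beta_{\text{LS}}\|_2/\sqrt{\lambda_{\pmin}(\bX^T\bX)}$, and plugging in the bound from part (iii) produces the stated $\sqrt{p}/\lambda_{\pmin}(\bX^T\bX)$ factor. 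Parts (v) and (vi) are elementary and independent of the optimization machinery: by the coefficient update in Step 3 of $\FSe$, exactly one coordinate changes per iteration, by at most $\varepsilon$ in absolute value, so the triangle inequality gives $\|\hat\beta^i\|_1 \le i\varepsilon \le k\varepsilon$, and at most one new nonzero coordinate is created per iteration, giving $\|\hat\beta^i\|_0 \le i \le k$.

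The main obstacle is the technical inequality underpinning parts (i)--(iii): tying the optimality gap to $\|\bX^T\hat r^i\|_\infty$ in the rank-deficient regime $p > n$. There one must avoid any appeal to $\lambda_{\min}(\bX^T\bX)$, which vanishes, and instead argue entirely within the range of $\bX^T\bX$ using $\lambda_{\pmin}(\bX^T\bX)$ — exactly the structural properties of the least-squares convex quadratic collected in Appendix \ref{appendix_quad}. Once that rank-deficient bound is in hand, everything else is bookkeeping driven by the single estimate of part (iv).
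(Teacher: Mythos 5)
Your proposal is correct and follows essentially the same route as the paper: identify $\FSe$ with subgradient descent on CM via Proposition \ref{FSequiv}, instantiate Proposition \ref{subgrad} with $f^*=0$, $\|\hat r^0-\hat r_{\text{LS}}\|_2=\|\bX\hat{\beta}_{\text{LS}}\|_2$, $G=1$, $\alpha=\varepsilon$ to get part \emph{(iv)}, and then convert the correlation bound into parts \emph{(i)}--\emph{(iii)} through the $\lambda_{\pmin}(\bX^T\bX)$-based geometry of the least-squares quadratic, with \emph{(v)} and \emph{(vi)} being structural. The only difference is that you carry out the quadratic-geometry step by a direct eigendecomposition restricted to the range of $\bX^T\bX$ rather than by invoking the packaged Proposition \ref{qp}, which in fact yields slightly sharper constants that still imply the stated bounds.
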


The proof of Theorem \ref{December17} is presented in Appendix \ref{migraine}.\medskip

\begin{figure}[htpb!]
\centering
\scalebox{0.99}[.85]{\begin{tabular}{l c c c}
\multicolumn{4}{c}{\sf $\FSe$ algorithm: $\ell_{1}$ shrinkage versus data-fidelity tradeoffs (theoretical bounds) \medskip}\\
&\small{\sf {   Synthetic dataset $(\kappa=1)$ }} & \small{\sf {Leukemia dataset} } & \small{\sf {Leukemia dataset (zoomed)} } \\
\rotatebox{90}{\sf {\scriptsize {~~~~~~~~~~~~~~~~~~~~~~~~~~~~~Training Error}}}&\includegraphics[width=0.3\textwidth,height=0.3\textheight,  trim = 1.1cm 1.2cm 1cm 2cm, clip = true ]{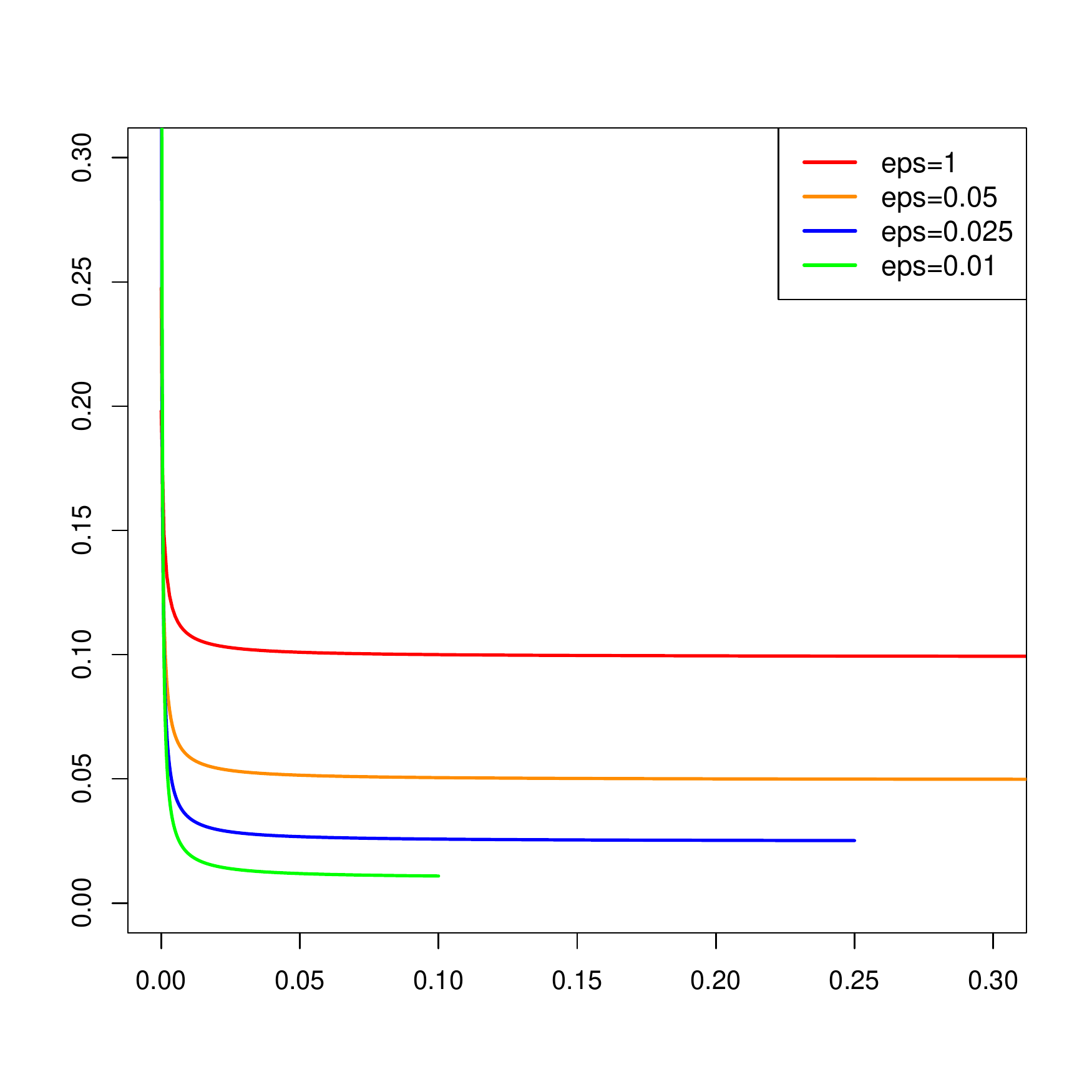}&
\includegraphics[width=0.3\textwidth,height=0.3\textheight,  trim = 1.1cm 1.2cm 1cm 2cm, clip = true ]{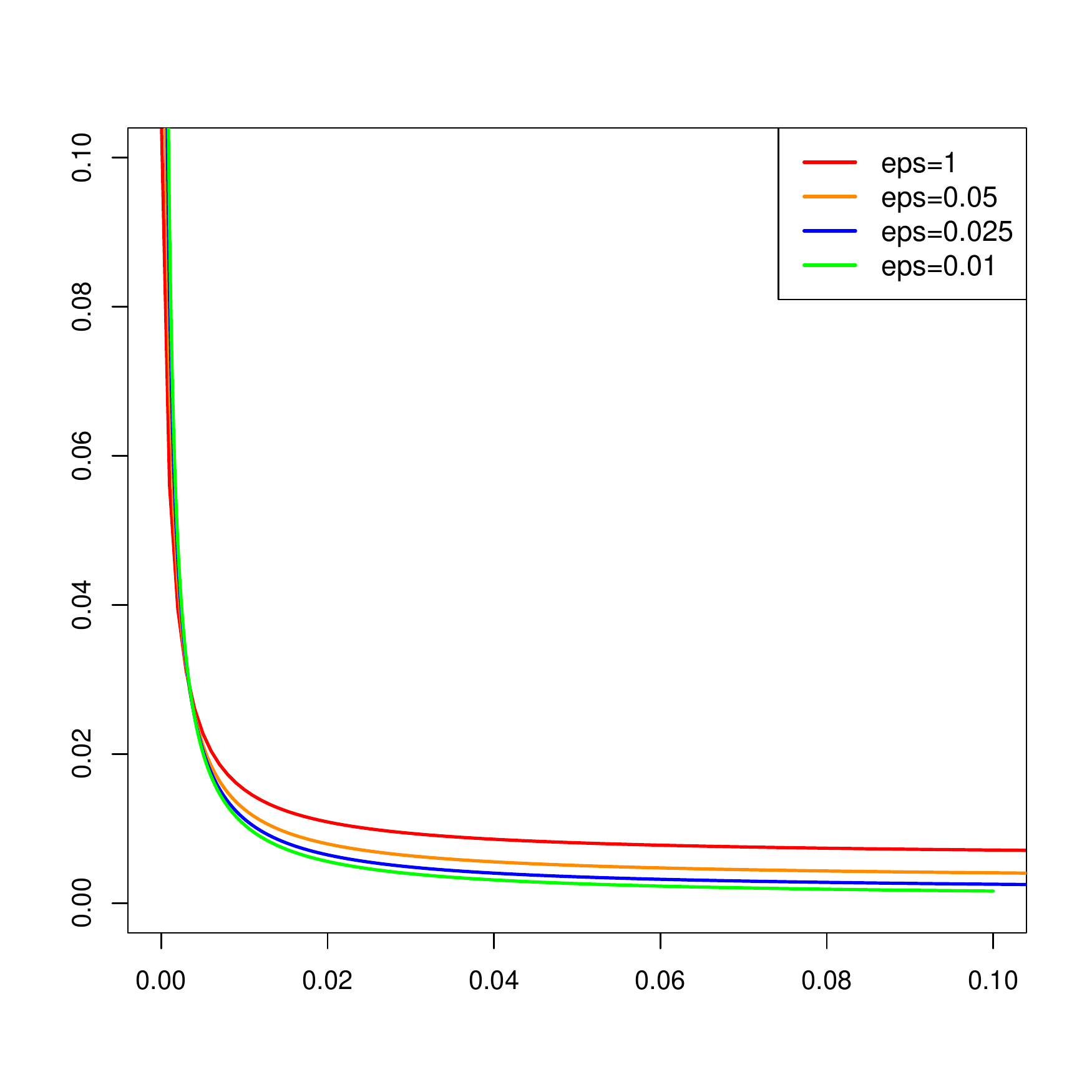}&
\includegraphics[width=0.3\textwidth,height=0.3\textheight,  trim = 1.1cm 1.2cm 1cm 2cm, clip = true ]{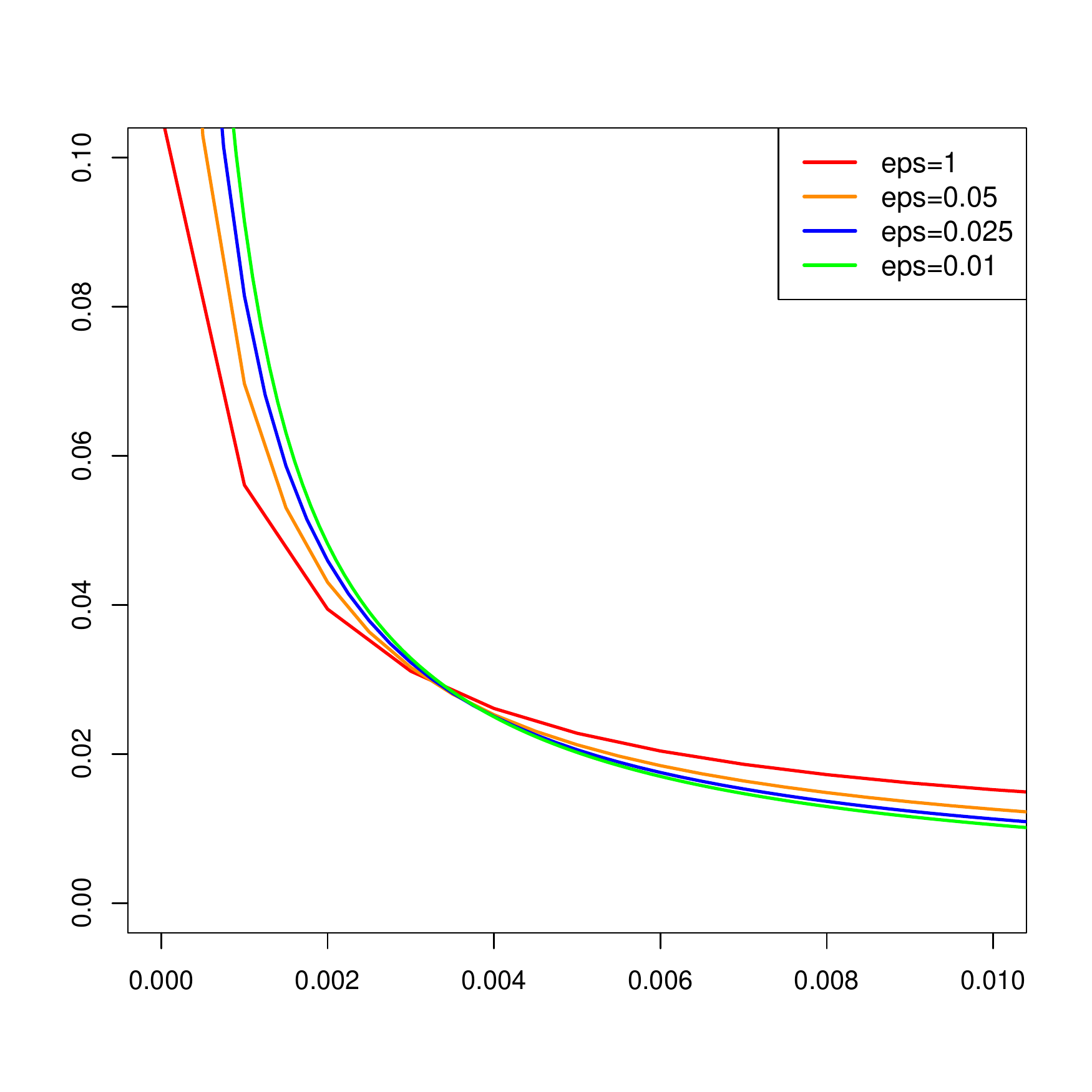} \\
& \scriptsize{\sf {$\ell_{1}$ shrinkage of coefficients}}  & \scriptsize{\sf {$\ell_{1}$ shrinkage of coefficients}} & \scriptsize{\sf {$\ell_{1}$ shrinkage of coefficients}}  \\
\end{tabular}}
\caption{{\small{Figure showing profiles of $\ell_{1}$ shrinkage bounds of the regression coefficients versus training error bounds for the $\FSe$ algorithm, for different values of  the learning rate $\varepsilon$. The profiles have been obtained from the bounds in parts {\em (i)} and {\em (v)} of Theorem \ref{December17}.  The left panel corresponds to a hypothetical dataset using $\kappa=\tfrac{p}{\lambda_{\pmin}} = 1$, and the middle and right panels use the parameters of the Leukemia dataset.}} }\label{fig:fse}
\end{figure}

\paragraph{Interpreting the Computational Guarantees} Theorem~\ref{December17}  accomplishes for $\FSe$ what Theorem~\ref{august24-2} did for \ebs --- parts {\em (i)} -- {\em (iv)} of the theorem describe the rate in which the training error, regression coefficients, and related quantities
make their way towards their ($O(\varepsilon)$-approximate) unregularized least squares counterparts. Part {\em (v)} of the theorem also describes the
rate at which the shrinkage of the regression coefficients evolve as a function of the number of boosting iterations.
The rate of convergence of $\FSe$ is sublinear, unlike the linear rate of convergence for \ebs.
Note that this type of sublinear convergence implies that the rate of decrease of the training error (for instance)
is dramatically faster in the very early iterations as compared to later iterations.  Taken together, Theorems \ref{December17}  and \ref{august24-2} highlight an important difference between the behavior of algorithms \ebs and $\FSe$:
\begin{itemize}
\item the limiting solution of the \ebs algorithm (as $k \rightarrow \infty$) corresponds to the unregularized least squares solution, but
\item the limiting solution of the $\FSe$ algorithm (as $k \rightarrow \infty$) corresponds to an $O(\varepsilon)$ approximate least squares solution.
\end{itemize}
As demonstrated in Theorems \ref{august24-2} and \ref{December17}, both \ebs and $\FSe$ have nice convergence properties with respect to the unconstrained least squares problem \eqref{poi-ls2}.  However, unlike the convergence results for \ebs in Theorem \ref{august24-2}, $\FSe$ exhibits a \emph{sublinear} rate of convergence towards a \emph{suboptimal} least squares solution.
For example, part {\em (i)} of Theorem \ref{December17} implies in the limit as $k \to \infty$ that $\FSe$ identifies a model with training error at most:
\begin{equation}\label{training_bound}
L_n^* + \frac{p\varepsilon^2}{2n(\lambda_{\pmin}(\bX^T\bX))} \ .
\end{equation}
In addition, part {\it (ii)} of Theorem \ref{December17} implies that as $k \to \infty$, $\FSe$ identifies a model whose distance to the set of least squares solutions $\{\hat{\beta}_{\text{LS}} : \bX^T\bX\hat{\beta}_{\text{LS}} = \bX^T\by\}$ is at most:
$\frac{\varepsilon\sqrt{p}}{\lambda_{\pmin}(\bX^T\bX)}. $

Note that the computational guarantees in Theorem~\ref{December17} involve the quantities $\lambda_{\pmin}(\bX^T\bX)$ and $\|\bX\hat{\beta}_{\text{LS}}\|_2$, assuming $n$ and $p$ are fixed.  To settle ideas, let us consider the synthetic datasets used in Figures~\ref{fig:one-ls-boost-rate} and~\ref{fig:fse-ebs-loss-1},
 where the covariates were generated from a multivariate Gaussian distribution with pairwise correlation $\rho$.
Figure~\ref{fig:one-ls-boost-rate} suggests that $\lambda_{\pmin}(\bX^T\bX)$ decreases with increasing $\rho$ values.
Thus, controlling for other factors
appearing in the computational bounds\footnote{To control for other factors, for example, we may assume that $p > n$  and for
different values of $\rho$ we have $\|\bX \hat\beta_{LS}\|_2 = \|\by\|_{2} = 1$ with $\varepsilon$ fixed across the different examples.} , it follows from the statements of Theorem~\ref{December17} that the training error decreases much more rapidly
for smaller $\rho$ values, as a function of $k$.  This is nicely validated by the computational results in Figure~\ref{fig:fse-ebs-loss-1} (the three top panel figures), which
show that the training errors decay at a faster rate for smaller values of $\rho$.\medskip

Let us examine more carefully the properties of the sequence of models explored by $\FSe$ and the corresponding tradeoffs between data fidelity and model complexity. Let \textsc{TBound} and \textsc{SBound} denote the training error bound and shrinkage bound in parts {\it (i)} and {\it (v)} of Theorem \ref{December17}, respectively. Then simple manipulation of the arithmetic in these two bounds yields the following tradeoff equation:
\begin{equation*}
\textsc{TBound} = \frac{p}{2n\lambda_{\pmin}(\bX^T\bX)}\left[\frac{\|\bX\hat{\beta}_{\text{LS}}\|_2^2}{\textsc{SBound} + \varepsilon}+ \varepsilon \right]^2 \ .
\end{equation*} The above tradeoff between the training error bound and the shrinkage bound is illustrated in Figure \ref{fig:fse}, which shows this tradeoff curve for four different values of the learning rate $\varepsilon$. Except for very small shrinkage levels, lower values of $\varepsilon$ produce smaller training errors.  But unlike the corresponding tradeoff curves for \ebs, there is a range of values of the shrinkage for which smaller values of $\varepsilon$ actually produce larger training errors, though admittedly this range is for very small shrinkage values.  For more reasonable shrinkage values, smaller values of $\varepsilon$ will correspond to smaller values of the training error.\medskip

Part {\em (v)} of Theorems \ref{august24-2} and~\ref{December17} presents shrinkage bounds for $\FSe$ and \ebs, respectively.  Let us briefly compare these bounds.  Examining the shrinkage bound for \ebs, we can bound the left term from above by $\sqrt{k} \sqrt{\varepsilon}\|\bX\hat\beta_{\text{LS}}\|_2$.  We can also bound the right term from above by $\varepsilon \|\bX\hat\beta_{\text{LS}}\|_2/(1-\sqrt{\gamma})$ where recall from Section \ref{LSBsection} that $\gamma$ is the linear convergence rate coefficient $\gamma := 1-\frac{\varepsilon(2-\varepsilon)\lambda_{\pmin}(\bX^T\bX)}{4p}$.  We may therefore alternatively write the following shrinkage bound for \ebs:

\begin{equation}\label{2miles}\|\hat\beta^k\|_1 \le \|\bX\hat\beta_{\text{LS}}\|_2 \min\left\{\sqrt{k} \sqrt{\varepsilon} \ , \ \varepsilon /(1-\sqrt{\gamma}) \right\} \ . \end{equation}

The shrinkage bound for $\FSe$ is simply $k\varepsilon$.  Comparing these two bounds, we observe that not only does the shrinkage bound for $\FSe$ grow at a faster rate as a function of $k$ for large enough $k$, but also the shrinkage bound for $\FSe$ grows unbounded in $k$, unlike the right term above for the shrinkage bound of \ebs.\medskip

One can also compare $\FSe$ and \ebs in terms of the efficiency with which these two methods achieve a certain pre-specified data-fidelity.  In Appendix \ref{retread} we show, at least in theory, that \ebs is much more efficient than $\FSe$ at achieving such data-fidelity, and furthermore it does so with much better shrinkage.

\section{Regularized Correlation Minimization, Boosting, and \lassoperiod}\label{sect_lasso}

\paragraph{Roadmap} In this section we introduce a new boosting algorithm, parameterized by a scalar $\delta \geq 0$, which we denote by $\RFSe$ (for Regularized incremental Forward Stagewise regression), that is obtained by incorporating a simple rescaling step to the coefficient updates in $\FSe$.  We then introduce a regularized version of the Correlation Minimization (CM) problem~\eqref{FS-problem} which we refer to as RCM.  We show that the adaptation of the subgradient descent algorithmic framework to the Regularized Correlation Minimization problem RCM exactly yields the algorithm $\RFSe$. The new algorithm $\RFSe$ may be interpreted as a natural extension of popular boosting algorithms like $\FSe$, and has the following notable properties:
\begin{itemize}

\item Whereas $\FSe$ updates the coefficients in an additive fashion by adding a small amount $\varepsilon$ to the coefficient most correlated with the current residuals, $\RFSe$ first shrinks \emph{all} of the coefficients by a scaling factor $1 - \tfrac{\varepsilon}{\delta} < 1$ and then updates the selected coefficient in the same additive fashion as $\FSe$.

\item $\RFSe$ delivers $O(\varepsilon)$-accurate solutions to the \lasso in the limit as $k \rightarrow \infty$, unlike $\FSe$ which delivers $O(\varepsilon)$-accurate
solutions to the unregularized least squares problem.

\item $\RFSe$ has computational guarantees similar in spirit to the ones described in the context of $\FSe$ -- these quantities directly inform us about
the data-fidelity {\it vis-\`{a}-vis} shrinkage tradeoffs as a function of the number of boosting iterations and the learning rate $\varepsilon$.

\end{itemize}

The notion of using additional regularization along with the implicit shrinkage imparted by boosting is not new in the literature. Various interesting notions
have been proposed in~\cite{friedman2003importance,buhlmann2006sparse,zhao2007stagewise,duchi2009boosting,FJ08}, see also the discussion in Appendix~\ref{related-work-lasso-boost} herein.  However, the framework we present here is new.  We present a unified subgradient descent framework for a class of regularized CM problems that results in algorithms that have appealing structural similarities with forward stagewise regression type algorithms, while also being very strongly connected to the \lassoperiod.

\paragraph{Boosting with additional shrinkage -- $\RFSe$} Here we give a formal description of the $\RFSe$ algorithm.  $\RFSe$ is controlled by two parameters: the learning rate $\varepsilon$, which plays the same role as the learning rate in $\FSe$, and the ``regularization parameter" $\delta \geq \varepsilon$. Our reason for referring to $\delta$ as a regularization parameter is due to the connection between $\RFSe$ and the \lassoperiod, which will be made clear later. The shrinkage factor, i.e., the amount by which we shrink the coefficients before updating the selected coefficient, is determined as $1 - \tfrac{\varepsilon}{\delta}$. Supposing that we choose to update the coefficient indexed by $j_k$ at iteration $k$, then the coefficient update may be  written as: $$\hat{\beta}^{k+1} \gets \left(1 - \tfrac{\varepsilon}{\delta}\right)\hat{\beta}^k + \varepsilon\cdot\sgn((\hat{r}^k)^T\bX_{j_k})e_{j_k} \ . $$
Below we give a concise description of $\RFSe$, including the update for the residuals that corresponds to the update for the coefficients stated above.

\begin{center}
 { {\bf  Algorithm:} $\RFSe$ }
\end{center}

Fix the learning rate $\varepsilon > 0$, regularization parameter $\delta>0$ such that $\varepsilon \leq \delta$, and number of iterations $M$.

Initialize at $\hat r^0 = \by$, $\hat{\beta}^0 = 0$, $k = 0$.

\bee
\item[\bf 1.] For $0 \leq k \leq M$ do the following:

\item[\bf 2.] Compute:
$j_k \in \argmax\limits_{j \in \{1, \ldots, p\}} |(\hat{r}^k)^T\bX_j|$

\item[\bf 3.]
\begin{description}
\item $\hat r^{k+1} \gets \hat{r}^k - \varepsilon\left[\sgn((\hat{r}^k)^T\bX_{j_k})\bX_{j_k} + \tfrac{1}{\delta}(\hat{r}^k - \by)\right]$
\item $\hat{\beta}^{k+1}_{j_k} \gets \left(1 - \tfrac{\varepsilon}{\delta}\right) \hat{\beta}^{k}_{j_k} + \varepsilon\ \sgn((\hat{r}^k)^T\bX_{j_k})$ and
$\hat{\beta}^{k+1}_j \gets \left(1 - \tfrac{\varepsilon}{\delta}\right) \hat{\beta}^{k}_j \ , j \neq j_k$
\end{description}
\eee
\medskip

Note that $\RFSe$ and $\FSe$ are structurally very similar -- and indeed when $\delta = \infty$ then $\RFSe$ is exactly $\FSe$.  Note also that $\RFSe$ shares the same upper bound on the sparsity of the regression coefficients as $\FSe$, namely for all $k$ it holds that: $\|\hat{\beta}^k\|_0 \le k$. When $\delta < \infty$ then, as previously mentioned, the main structural difference between $\RFSe$ and $\FSe$ is the additional rescaling of the coefficients by the factor $1 - \tfrac{\varepsilon}{\delta}$. This rescaling better controls the growth of the coefficients and, as will be demonstrated next, plays a key role in connecting $\RFSe$ to the \lassoperiod.

\paragraph{Regularized Correlation Minimization (RCM) and \lassoperiod} The starting point of our formal analysis of $\RFSe$ is the Correlation Minimization (CM) problem \eqref{FS-problem}, which we now modify by introducing a regularization term that penalizes residuals that are far from the vector of observations $\by$. This modification leads to the following parametric family of optimization problems indexed by $\delta \in (0, \infty]$:
\begin{equation}\label{FS-dual-prob}
\begin{array}{rccrll}
\mathrm{RCM}_\delta : \ \ \ \ \ \ \ \ \ f_\delta^\ast := & \min\limits_{r} & f_\delta(r) \ \ & := & \|\bX^Tr\|_\infty + \tfrac{1}{2\delta}\|r-\by\|_2^2\\ \\
&\mathrm{s.t.} & r \in P_{\mathrm{res}} & := & \{r \in \mathbb{R}^n : r=\by-\bX\beta \ \mathrm{for~some~} \beta \in \mathbb{R}^p\}  \ , \end{array}
\end{equation}
where ``RCM'' connotes Regularlized Correlation Minimization.
Note that RCM reduces to the correlation minimization problem CM \eqref{FS-problem} when $\delta = \infty$.  RCM may be interpreted as the problem of minimizing, over the space of residuals, the largest correlation between the residuals and the predictors plus a regularization term that penalizes residuals that are far from the response $\by$ (which itself can be interpreted as the residuals associated with the model $\beta = 0$).

Interestingly, as we show in Appendix \ref{lasso_dual_appendix}, RCM \eqref{FS-dual-prob} is equivalent to the \lasso \eqref{poi-lasso} via duality. This equivalence provides further insight about the regularization used to obtain $\mathrm{RCM}_\delta$. Comparing the \lasso and RCM, notice that the space of the
variables of the \lasso is the space of regression coefficients $\beta$, namely $\mathbb{R}^p$, whereas the space of the variables of RCM is the space of model residuals, namely $\mathbb{R}^n$, or more precisely $P_{\mathrm{res}}$. The duality relationship shows that $\mathrm{RCM}_\delta$ \eqref{FS-dual-prob} is an equivalent characterization of the \lasso problem, just like the correlation minimization (CM) problem \eqref{FS-problem} is an equivalent characterization of the (unregularized) least squares problem. Recall that Proposition~\ref{FSequiv} showed that subgradient descent applied to the CM problem~\eqref{FS-dual-prob} (which is $\mathrm{RCM}_\delta$ with $\delta = \infty$) leads to the well-known boosting algorithm $\FSe$. We now extend this theme with the following Proposition, which demonstrates $\RFSe$ is equivalent to subgradient descent applied to $\mathrm{RCM}_\delta $.\medskip

\begin{proposition}\label{RFSequiv}
The $\RFSe$ algorithm is an instance of subgradient descent to solve the regularized correlation minimization ($\mathrm{RCM}_\delta$) problem \eqref{FS-dual-prob}, initialized at $\hat r^0 = \by$, with a constant step-size $\alpha_k := \varepsilon$ at each iteration.
\end{proposition}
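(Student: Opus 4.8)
The plan is to mirror the proof of Proposition~\ref{FSequiv} almost verbatim, the only new ingredient being the smooth regularization term in $f_\delta$. First I would identify the candidate subgradient. Write $f_\delta(r) = \|\bX^T r\|_\infty + \tfrac{1}{2\delta}\|r - \by\|_2^2$ as the sum of the (nonsmooth) correlation term and a differentiable quadratic penalty. Proposition~\ref{FSequiv} already establishes that $\sgn((\hat r^k)^T\bX_{j_k})\bX_{j_k}$ is a subgradient of $\|\bX^T r\|_\infty$ at $r = \hat r^k$, where $j_k \in \argmax_{j}|(\hat r^k)^T\bX_j|$. The quadratic penalty is differentiable everywhere with gradient $\tfrac{1}{\delta}(r - \by)$, so by the sum rule for subdifferentials (valid here because the quadratic summand is differentiable everywhere) the vector
$$g^k := \sgn((\hat r^k)^T\bX_{j_k})\bX_{j_k} + \tfrac{1}{\delta}(\hat r^k - \by)$$
lies in $\partial f_\delta(\hat r^k)$. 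This is exactly the bracketed quantity appearing in the residual update of $\RFSe$, so that update is of the form $\hat r^{k+1} = \hat r^k - \varepsilon g^k$ with $g^k \in \partial f_\delta(\hat r^k)$ and constant step-size $\alpha_k := \varepsilon$.

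Next I would check that the Euclidean projection onto $P_{\mathrm{res}}$ is superfluous, i.e.\ that $\hat r^k - \varepsilon g^k$ already lies in $P_{\mathrm{res}}$ and hence equals its own projection. The natural way to do this is to verify the algebraic identity $\hat r^k - \varepsilon g^k = \by - \bX\hat\beta^{k+1}$, where $\hat\beta^{k+1} = (1 - \tfrac{\varepsilon}{\delta})\hat\beta^k + \varepsilon\,\sgn((\hat r^k)^T\bX_{j_k})e_{j_k}$ is the coefficient vector produced by Step~3 of $\RFSe$. Substituting $\hat r^k = \by - \bX\hat\beta^k$ (equivalently $\bX\hat\beta^k = \by - \hat r^k$) into $\by - \bX\hat\beta^{k+1}$ and expanding, the terms reorganize into $\hat r^k - \varepsilon\,\sgn((\hat r^k)^T\bX_{j_k})\bX_{j_k} - \tfrac{\varepsilon}{\delta}(\hat r^k - \by)$, which is precisely $\hat r^k - \varepsilon g^k$. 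Since the right-hand side has the form $\by - \bX\hat\beta^{k+1}$ it belongs to $P_{\mathrm{res}}$, so $\Pi_{P_{\mathrm{res}}}(\hat r^k - \varepsilon g^k) = \hat r^k - \varepsilon g^k = \hat r^{k+1}$. Together with the shared initialization $\hat r^0 = \by$, this shows the $\RFSe$ residual iterates coincide exactly with the subgradient descent iterates \eqref{subgrad_update2} for $\mathrm{RCM}_\delta$ with $\alpha_k = \varepsilon$.

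I do not expect a genuine obstacle here; the content is essentially bookkeeping resting on Proposition~\ref{FSequiv}. The one point requiring care is the identity $\hat r^k - \varepsilon g^k = \by - \bX\hat\beta^{k+1}$: one must track the rescaling factor $1 - \tfrac{\varepsilon}{\delta}$ acting on \emph{all} coordinates of $\hat\beta^k$ (not merely the updated coordinate $j_k$) and confirm that, after substituting $\bX\hat\beta^k = \by - \hat r^k$, this rescaling contributes exactly the penalty-gradient term $\tfrac{\varepsilon}{\delta}(\hat r^k - \by)$. Verifying that the displayed coefficient update and residual update of $\RFSe$ are mutually consistent (i.e.\ that they jointly satisfy $\hat r^{k+1} = \by - \bX\hat\beta^{k+1}$) is therefore both the crux and the most error-prone step, and I would present that computation explicitly while citing Proposition~\ref{FSequiv} for the subgradient of the $\|\bX^T r\|_\infty$ term.
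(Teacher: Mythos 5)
Your proposal is correct and follows essentially the same route as the paper's proof: identify $g^k = \sgn((\hat r^k)^T\bX_{j_k})\bX_{j_k} + \tfrac{1}{\delta}(\hat r^k - \by)$ as a subgradient of $f_\delta$ at $\hat r^k$ via Proposition~\ref{FSequiv} plus the additivity of subdifferentials, then observe that the update lands in $P_{\mathrm{res}}$ so the projection is superfluous. Your explicit verification that $\hat r^k - \varepsilon g^k = \by - \bX\hat\beta^{k+1}$ is a correct (and slightly more careful) elaboration of a step the paper simply asserts.
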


The proof of Proposition \ref{RFSequiv} is presented in Appendix \ref{may3}.

\subsection{$\RFSe$: Computational Guarantees and their Implications}

In this subsection we present computational guarantees and convergence properties of the boosting algorithm $\RFSe$.  Due to the structural equivalence between $\RFSe$ and subgradient descent applied to the $\mathrm{RCM}_\delta$ problem \eqref{FS-dual-prob} (Proposition \ref{RFSequiv}) and the close connection between $\mathrm{RCM}_\delta$ and the \lasso (Appendix \ref{lasso_dual_appendix}), the convergence properties of $\RFSe$ are naturally stated with respect to the \lasso problem \eqref{poi-lasso}.  Similar to Theorem \ref{December17} which described such properties for $\FSe$ (with respect to the unregularized least squares problem), we have the following properties for $\RFSe$.\medskip

\begin{theorem}\label{RFSe-guarantees} {\bf (Convergence Properties of $\RFSe$ for the \lasso)}  Consider the $\RFSe$ algorithm with learning rate $\varepsilon$ and regularization parameter $\delta \in (0, \infty)$, where $\varepsilon \leq \delta$.  Then the regression 
coefficient $\hat{\beta}^k$ is feasible for the \lasso problem \eqref{poi-lasso} for all $k\ge0$.  Let $k \ge 0$ denote a specific iteration counter.  Then there exists an index $i \in \{0, \ldots, k\}$ for which the following bounds hold:
\begin{itemize}
\item[(i)] (training error): $L_n(\hat{\beta}^i) - L_{n,\delta}^* ~\leq \frac{\delta}{n}\left[\frac{\|\bX\hat{\beta}_{\text{LS}}\|_2^2}{2\varepsilon(k+1)} + 2\varepsilon \right]$
\item[(ii)] (predictions): for every \lasso solution $\hat{\beta}^\ast_\delta$ it holds that $$\|\bX\hat{\beta}^i - \bX \hat{\beta}^\ast_\delta\|_2 ~\leq \sqrt{\frac{\delta\|\bX\hat{\beta}_{\text{LS}}\|_2^2}{\varepsilon(k+1)} + 4\delta\varepsilon }$$
\item[(iii)] ($\ell_{1}$-shrinkage of coefficients): $\|\hat{\beta}^i\|_1 \le~ \delta\left[1 - \left(1 - \tfrac{\varepsilon}{\delta}\right)^k\right] \leq~ \delta $
\item[(iv)] (sparsity of coefficients): $\|\hat{\beta}^i\|_0 \le k$ . \qed
\end{itemize}
\end{theorem}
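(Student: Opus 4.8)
The plan is to read all four items off the identification of $\RFSe$ with subgradient descent applied to $\mathrm{RCM}_\delta$ \eqref{FS-dual-prob} (Proposition \ref{RFSequiv}), combined with the generic subgradient bound (Proposition \ref{subgrad}) and the \lasso--$\mathrm{RCM}$ duality of Appendix \ref{lasso_dual_appendix}. Since Proposition \ref{subgrad} needs a uniform subgradient bound $G$ and a bound on the initial distance $\|\hat r^0 - r^\ast_\delta\|_2$ (where $r^\ast_\delta$ is the unique minimizer of $f_\delta$ over $P_{\mathrm{res}}$), I would first dispose of feasibility together with parts (iii)--(iv), because feasibility is precisely what controls $G$. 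From the coefficient update $\hat\beta^{k+1} = (1-\tfrac{\varepsilon}{\delta})\hat\beta^k + \varepsilon\,\sgn((\hat r^k)^T\bX_{j_k})e_{j_k}$ and the triangle inequality, $\|\hat\beta^{k+1}\|_1 \le (1-\tfrac{\varepsilon}{\delta})\|\hat\beta^k\|_1 + \varepsilon$; since $\hat\beta^0=0$ and $0\le 1-\tfrac{\varepsilon}{\delta}<1$, unrolling this geometric recursion gives $\|\hat\beta^k\|_1 \le \varepsilon\sum_{j=0}^{k-1}(1-\tfrac{\varepsilon}{\delta})^j = \delta[1-(1-\tfrac{\varepsilon}{\delta})^k]\le\delta$, which is simultaneously part (iii) and the feasibility claim. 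Part (iv) is immediate since each iteration alters a single coordinate, so $\|\hat\beta^k\|_0\le k$.

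Next I would set up the subgradient estimate. By Proposition \ref{RFSequiv} the subgradient used at step $k$ is $g^k = \sgn((\hat r^k)^T\bX_{j_k})\bX_{j_k} + \tfrac{1}{\delta}(\hat r^k-\by)$; since the columns have unit norm and $\hat r^k-\by=-\bX\hat\beta^k$, feasibility gives $\|\bX\hat\beta^k\|_2 \le \|\bX\|_{1,2}\|\hat\beta^k\|_1 = \|\hat\beta^k\|_1\le\delta$, whence $\|g^k\|_2 \le 1 + \tfrac{1}{\delta}\|\bX\hat\beta^k\|_2 \le 2$, so $G=2$. For the initial distance, $\hat r^0=\by$ and $r^\ast_\delta=\by-\bX\hat\beta^\ast_\delta$ for a \lasso solution $\hat\beta^\ast_\delta$, so $\|\hat r^0-r^\ast_\delta\|_2 = \|\bX\hat\beta^\ast_\delta\|_2$; because $\bX\hat\beta^\ast_\delta$ is the Euclidean projection of $\by$ onto the convex set $\{\bX\beta:\|\beta\|_1\le\delta\}\subseteq\mathrm{col}(\bX)$ (which contains the origin) and $\bX\hat\beta_{\text{LS}}$ is the projection onto all of $\mathrm{col}(\bX)$, a nested-projection/non-expansiveness argument gives $\|\bX\hat\beta^\ast_\delta\|_2\le\|\bX\hat\beta_{\text{LS}}\|_2$. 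Feeding $\alpha=\varepsilon$, $G=2$, and this distance bound into Proposition \ref{subgrad} yields, for the index $i$ attaining $\min_{0\le j\le k}f_\delta(\hat r^j)$,
\[
f_\delta(\hat r^i) - f_\delta^\ast \;\le\; \frac{\|\bX\hat{\beta}_{\text{LS}}\|_2^2}{2\varepsilon(k+1)} + 2\varepsilon \;=:\; B .
\]
This is the index $i$ asserted in the theorem, and the remaining bounds are extracted at this $i$.

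Part (ii) I expect to be clean and to follow from strong convexity: the term $\tfrac{1}{2\delta}\|r-\by\|_2^2$ makes $f_\delta$ be $\tfrac{1}{\delta}$-strongly convex, and $r^\ast_\delta$ minimizes it over the convex set $P_{\mathrm{res}}$, so $f_\delta(\hat r^i)\ge f_\delta^\ast + \tfrac{1}{2\delta}\|\hat r^i-r^\ast_\delta\|_2^2$. Combining with the display gives $\|\hat r^i-r^\ast_\delta\|_2^2\le 2\delta B$; since $\hat r^i-r^\ast_\delta = -(\bX\hat\beta^i-\bX\hat\beta^\ast_\delta)$ we get $\|\bX\hat\beta^i-\bX\hat\beta^\ast_\delta\|_2 = \|\hat r^i-r^\ast_\delta\|_2 \le \sqrt{2\delta B}$, which is exactly part (ii).

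The hard part will be part (i), the training-error bound. Here I would invoke the duality of Appendix \ref{lasso_dual_appendix}, which can be written as $L_{n,\delta}^\ast = \tfrac{1}{2n}\|\by\|_2^2 - \tfrac{\delta}{n}f_\delta^\ast$, together with the weak-duality inequality $\tfrac{1}{2n}\|\by\|_2^2-\tfrac{\delta}{n}f_\delta(r)\le L_{n,\delta}^\ast$ for every $r\in P_{\mathrm{res}}$. The tempting move is to pass from $f_\delta(\hat r^i)-f_\delta^\ast\le B$ straight to $L_n(\hat\beta^i)-L_{n,\delta}^\ast\le\tfrac{\delta}{n}B$, but this is exactly where care is required: the primal (training-error) gap and the dual ($f_\delta$) gap need \emph{not} coincide, and in fact when the \lasso constraint is active the former can be strictly larger, so the naive bound $L_n(\hat\beta^i)-L_{n,\delta}^\ast\le\tfrac{\delta}{n}\big(f_\delta(\hat r^i)-f_\delta^\ast\big)$ is false. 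I anticipate the resolution to combine weak duality with the feasibility $\|\hat\beta^i\|_1\le\delta$ and the strong-convexity distance estimate from part (ii), crucially exploiting the full subgradient-descent estimate (its initial-distance term $\tfrac{\|\bX\hat\beta_{\text{LS}}\|_2^2}{2\varepsilon(k+1)}$, not merely the limiting $2\varepsilon$) rather than the dual gap alone; pinning down that the training-error gap is genuinely controlled by $\tfrac{\delta}{n}B$ is the crux of the argument and the step on which I would spend the most effort.
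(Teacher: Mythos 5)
Your scaffolding --- feasibility and part \emph{(iii)} via the geometric recursion on $\|\hat{\beta}^k\|_1$, part \emph{(iv)}, the subgradient bound $G=2$, and the initial-distance bound $\|\bX\hat{\beta}_{\text{LS}}\|_2$ --- matches the paper, and your derivation of part \emph{(ii)} from the $\tfrac{1}{\delta}$-strong convexity of $f_\delta(\cdot)$ is a valid alternative to the paper's route (the paper instead deduces \emph{(ii)} from \emph{(i)} by directly expanding $\|\bX\hat{\beta}^i - \bX\hat{\beta}^\ast_\delta\|_2^2$ and using $\omega_\delta(\hat{\beta}^\ast_\delta)=0$ plus H\"older). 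But part \emph{(i)} is left genuinely unresolved, and you have correctly diagnosed why: the dual gap $f_\delta(\hat r^i)-f_\delta^\ast$ does not control the primal gap. Combining \eqref{weak_arithmetic} with strong duality gives the identity $\tfrac{\delta}{n}\,\omega_\delta(\hat{\beta}^i) = \bigl(L_n(\hat{\beta}^i)-L_{n,\delta}^\ast\bigr) + \tfrac{\delta}{n}\bigl(f_\delta(\hat r^i)-f_\delta^\ast\bigr)$, so $\omega_\delta$ dominates both gaps and neither gap dominates the other. Moreover, the rescue you sketch --- feeding the distance estimate of part \emph{(ii)} back into weak duality --- runs in the wrong direction: the same expansion used for \emph{(ii)} shows $\|\bX\hat{\beta}^i-\bX\hat{\beta}^\ast_\delta\|_2^2 \le 2n\bigl(L_n(\hat{\beta}^i)-L_{n,\delta}^\ast\bigr)$, i.e.\ the primal gap dominates the squared prediction distance, so the distance cannot be used to bound the primal gap.

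The missing idea is to bypass the dual gap entirely and bound $\omega_\delta(\hat{\beta}^i)$ directly. The paper does this by applying not the packaged bound of Proposition \ref{subgrad} but the underlying telescoping estimate (Proposition \ref{simpleprop}, inequality \eqref{waytoomuchsnow}), which holds for an \emph{arbitrary} reference point $x$, and choosing $x=\hat r_{\text{LS}}$ rather than the $\mathrm{RCM}_\delta$ minimizer. Because $\bX^T\hat r_{\text{LS}}=0$ and $\hat r^i-\by = -\bX\hat{\beta}^i$, the inner product collapses exactly:
\begin{equation*}
(g^i)^T(\hat r^i - \hat r_{\text{LS}}) \;=\; \|\bX^T\hat r^i\|_\infty - \tfrac{1}{\delta}(\hat r^i)^T\bX\hat{\beta}^i \;=\; \omega_\delta(\hat{\beta}^i) \ .
\end{equation*}
Hence $\tfrac{1}{k+1}\sum_{i=0}^k\omega_\delta(\hat{\beta}^i) \le \tfrac{\|\bX\hat{\beta}_{\text{LS}}\|_2^2}{2\varepsilon(k+1)} + 2\varepsilon$, and taking $i$ to minimize $\omega_\delta(\hat{\beta}^i)$ and invoking Proposition \ref{LASSO-dual}\emph{(iii)} yields part \emph{(i)}. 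This choice of $i$ also serves your strong-convexity proof of \emph{(ii)}, since $\tfrac{\delta}{n}\,\omega_\delta(\hat{\beta}^i)$ bounds the dual gap as well, so all four bounds hold at a common index.
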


The proof of Theorem \ref{RFSe-guarantees} is presented in Appendix \ref{migraine2}.\medskip

\begin{figure}[h!]
\centering
\scalebox{0.94}[0.85]{\begin{tabular}{c c c   c c}
& \multicolumn{4}{c}{\sf $\RFSe$ algorithm, Prostate cancer dataset (computational bounds) } \medskip\\
\rotatebox{90}{\sf {\scriptsize {~~~~~~~$\ell_{1}$-norm of coefficients (relative scale)}}}&\includegraphics[width=0.3\textwidth,height=0.3\textheight,  trim = 1.0cm 1.7cm .1cm 2cm, clip = true ]{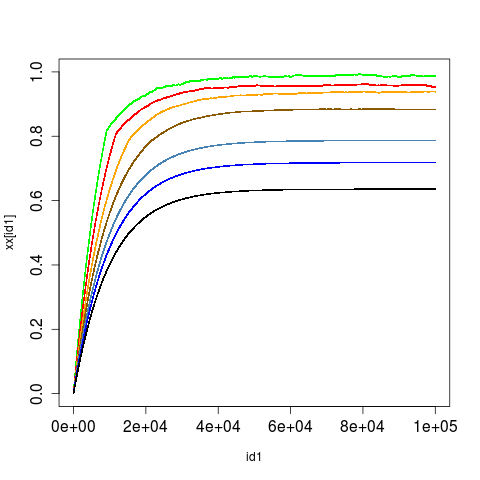}&
\rotatebox{90}{\sf {\scriptsize {~~~~~~~~~~~~Training Error (relative scale)}}}&\includegraphics[width=0.3\textwidth,height=0.3\textheight,  trim = 1.0cm 1.7cm .5cm 2cm, clip = true ]{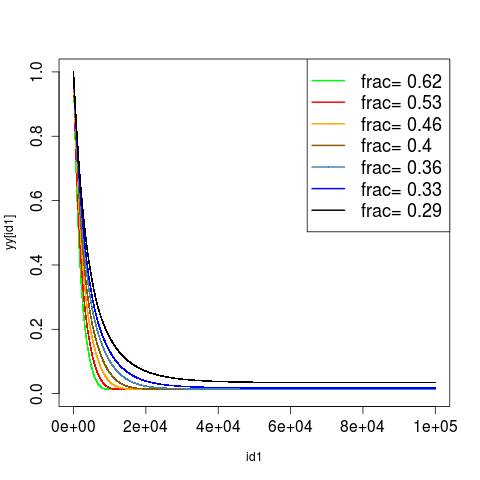}&
\includegraphics[width=0.3\textwidth,height=0.3\textheight,  trim = 1.0cm 1.7cm .3cm 2cm, clip = true ]{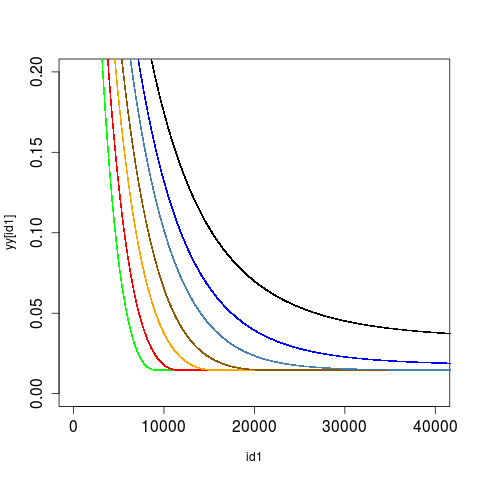} \\
& \scriptsize{\sf {Iterations}}  &&  \scriptsize{\sf {Iterations}}   & \scriptsize{\sf {Iterations}}    \\
\end{tabular}}
\caption{{\small{Figure showing the evolution of the $\RFSe$ algorithm (with $\varepsilon= 10^{-4}$) for different values of $\delta$, as a function of the number of boosting iterations for the Prostate cancer dataset, with
$n=10,p=44$, appearing in the bottom panel of Figure~\ref{fig:FSe-lasso-RFSe}.
 [Left panel] shows the change of the $\ell_{1}$-norm of the regression coefficients.
[Middle panel] shows the evolution of the training errors, and [Right panel] is a zoomed-in version of the middle panel. Here we took different values of  $\delta$ given by $\delta = \text{frac} \times \delta_{\max}$, where,
$\delta_{\max}$
 denotes the $\ell_{1}$-norm of the minimum $\ell_{1}$-norm least squares solution, for $7$ different values of \text{frac}.}}}\label{fig:FW-fse-loss-L1}
\end{figure}

\paragraph{Interpreting the Computational Guarantees} The statistical interpretations implied by the computational guarantees presented in Theorem \ref{RFSe-guarantees} are analogous to those previously discussed for \ebs (Theorem \ref{august24-2}) and $\FSe$ (Theorem \ref{December17}).  These guarantees inform us about the data-fidelity {\it vis-\`{a}-vis} shrinkage tradeoffs as a function of the number of boosting iterations, as nicely demonstrated in Figure~\ref{fig:FW-fse-loss-L1}.  There is, however, an important differentiation between the properties of $\RFSe$ and the properties of \ebs and $\FSe$, namely:
\begin{itemize}
\item For \ebs and $\FSe$, the computational guarantees (Theorems \ref{august24-2} and \ref{December17}) describe how the estimates make their way to a unregularized ($O(\varepsilon)$-approximate) least squares solution as a function of the number of boosting iterations.
\item For $\RFSe$, our results (Theorem \ref{RFSe-guarantees}) characterize how the estimates approach a ($O(\varepsilon)$-approximate) \lasso solution.
\end{itemize}

Notice that like $\FSe$, $\RFSe$ traces out a profile of regression coefficients. This is reflected in item {\em (iii)} of Theorem \ref{RFSe-guarantees} which bounds the $\ell_{1}$-shrinkage of the coefficients as a function of the number of boosting iterations $k$. Due to the rescaling of the coefficients, the $\ell_{1}$-shrinkage may be bounded by a geometric series that approaches $\delta$ as $k$ grows. Thus, there are two important aspects of the bound in item {\em (iii)}: {\em (a)} the dependence on the number of boosting iterations $k$ which characterizes model complexity during early iterations, and {\em (b)} the uniform bound of $\delta$ which applies even in the limit as $k \to \infty$ and implies that all regression coefficient iterates $\hat{\beta}^k$ are feasible for the \lasso problem \eqref{poi-lasso}.

On the other hand, item {\em (i)} characterizes the quality of the coefficients with respect to the \lasso solution, as opposed to the unregularized least squares problem as in $\FSe$. In the limit as $k \to \infty$, item {\em (i)} implies that $\RFSe$ identifies a model with training error at most
$L_{n, \delta}^* + \frac{2\delta\varepsilon}{n} \ .$
This upper bound on the training error may be set to any prescribed error level by appropriately tuning $\varepsilon$; in particular, for $\varepsilon \approx 0$ and fixed $\delta>0$ this limit is essentially $L_{n, \delta}^*$. Thus, combined with the uniform bound of $\delta$ on the $\ell_{1}$-shrinkage, we see that the $\RFSe$ algorithm delivers the \lasso solution in the limit as $k \rightarrow \infty$.

It is important to emphasize that $\RFSe$ should not just be interpreted as an algorithm to solve the \lassoperiod. Indeed, like $\FSe$, the trajectory of the algorithm is important and $\RFSe$ may identify a more statistically interesting model in the interior of its profile. Thus, even if the \lasso solution for $\delta$ leads to overfitting, the $\RFSe$ updates may visit a model with better predictive performance by trading off bias and variance in a more desirable fashion suitable for the particular problem at hand.

Figure~\ref{fig:FSe-lasso-RFSe} shows the profiles of $\RFSe$ for different values of $\delta \leq \delta_{\max}$, where $\delta_{\max}$ is the $\ell_{1}$-norm of  the minimum $\ell_{1}$-norm least squares solution. Curiously enough, Figure~\ref{fig:FSe-lasso-RFSe} shows that in some cases, the profile of $\RFSe$ bears a lot of similarities with that of the \lasso (as presented in Figure~\ref{fig:lasso-similar}).
However, the profiles are in general different.
Indeed, $\RFSe$ imposes a uniform bound of $\delta$ on the $\ell_{1}$-shrinkage, and so for values larger than $\delta$ we cannot possibly expect $\RFSe$ to approximate the \lasso path. However, even if $\delta$ is taken to be sufficiently large (but finite) the profiles may be different. In this connection it is helpful to draw the analogy between the curious similarities between the
$\FSe$ (i.e., $\RFSe$ with $\delta = \infty$) and \lasso coefficient profiles, even though the profiles are different in general.

\begin{figure}[ht!]
\centering
\begin{tabular}{l c c c c}
 & \sf {\scriptsize {$\FSe$ }} & \sf {\scriptsize { $\RFSe$, $\delta=0.99\delta_{\max}$ }} & \sf {\scriptsize {$\RFSe$,$\delta=0.91\delta_{\max}$ }} & \sf {\scriptsize {$\RFSe$,$\delta=0.81\delta_{\max}$}} \\
\rotatebox{90}{\sf {\scriptsize {~~~~~~~~Regression Coefficients}}}&
\includegraphics[width=0.23\textwidth,height=0.2\textheight,  trim = 1.cm 1.5cm .2cm 1.7cm, clip = true ]{example_case3fse_1.png}&
\includegraphics[width=0.23\textwidth,height=0.2\textheight,  trim = 2cm 1.5cm .2cm 1.7cm, clip = true ]{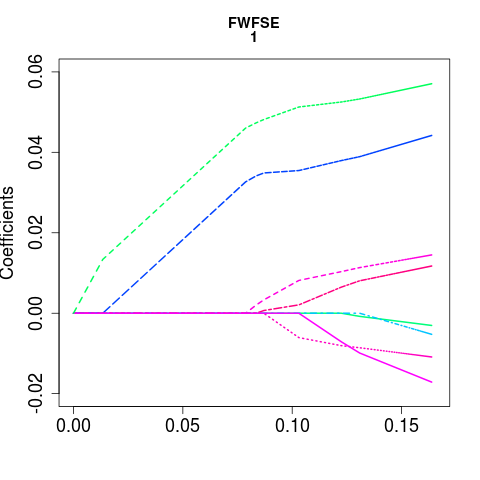}&
\includegraphics[width=0.23\textwidth,height=0.2\textheight,  trim = 2cm 1.5cm .2cm 1.7cm, clip = true ]{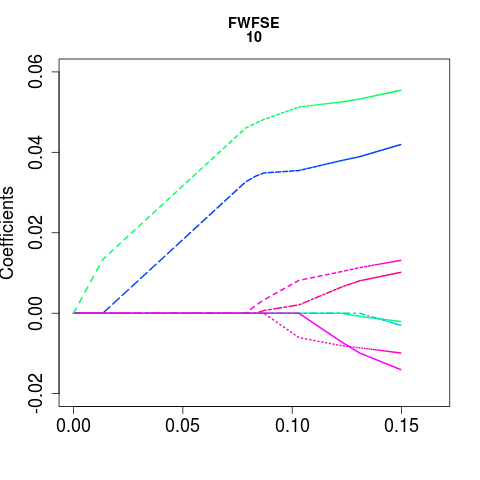}&
\includegraphics[width=0.23\textwidth,height=0.2\textheight,  trim = 2cm 1.5cm .2cm 1.7cm, clip = true ]{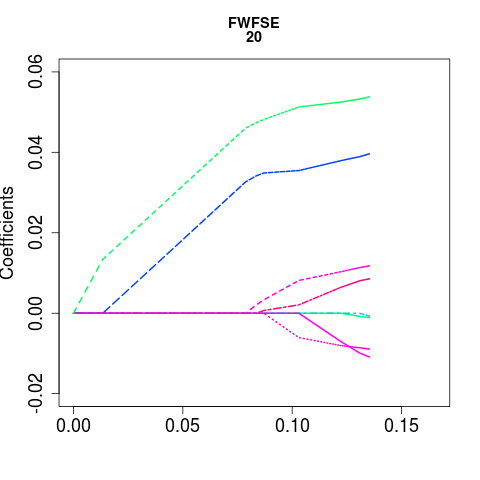}\\
&&&& \\
& \sf {\scriptsize {$\FSe$ }} & \sf {\scriptsize { $\RFSe$, $\delta=0.44\delta_{\max}$ }} & \sf {\scriptsize {$\RFSe$,$\delta=0.37\delta_{\max}$ }} & \sf {\scriptsize {$\RFSe$,$\delta=0.33\delta_{\max}$}} \\
\rotatebox{90}{\sf {\scriptsize {~~~~~~~~~~~Regression Coefficients}}}&
\includegraphics[width=0.23\textwidth,height=0.2\textheight,  trim = 1.cm 1.5cm .2cm 1.7cm, clip = true ]{example_case2fse_1.png}&
\includegraphics[width=0.23\textwidth,height=0.2\textheight,  trim = 2cm 1.5cm .2cm 1.7cm, clip = true ]{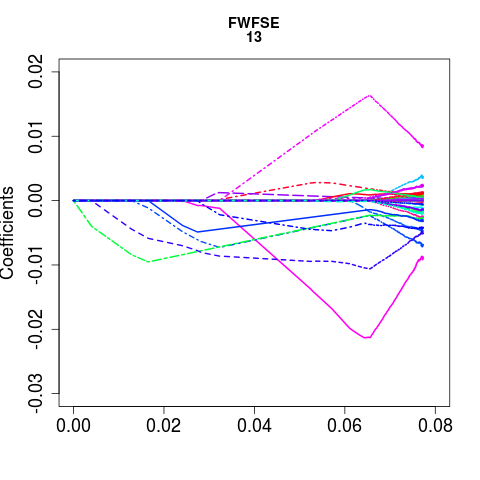}&
\includegraphics[width=0.23\textwidth,height=0.2\textheight,  trim = 2cm 1.5cm 0.2cm 1.7cm, clip = true ]{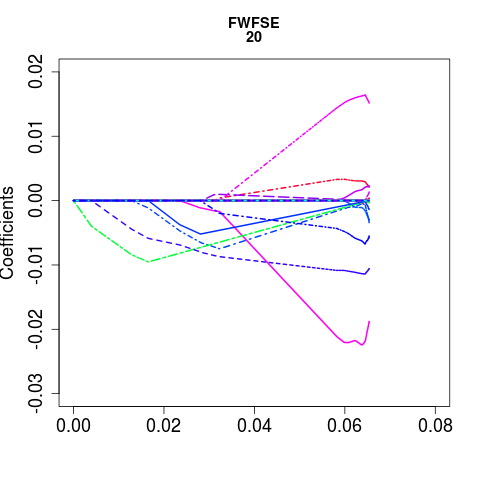}&
\includegraphics[width=0.23\textwidth,height=0.2\textheight,  trim = 2cm 1.5cm 0.2cm 1.7cm, clip = true ]{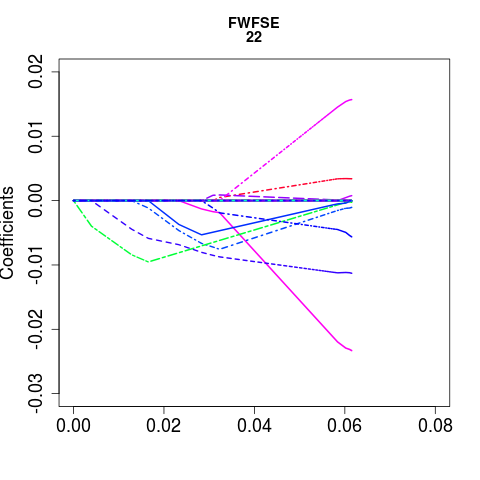}\\

 & \sf {\scriptsize { $\ell_{1}$-norm of coefficients }} & \sf {\scriptsize { $\ell_{1}$-norm of coefficients }}& \sf {\scriptsize { $\ell_{1}$-norm of coefficients }} & \sf {\scriptsize { $\ell_{1}$-norm of coefficients }}  \\
\end{tabular}
\caption{{\small{Coefficient profiles for $\RFSe$ as a function of the $\ell_{1}$-norm of the regression coefficients, for the same datasets appearing in Figure~\ref{fig:lasso-similar}.
 For each example, different values of $\delta$ have been considered.
The left panel corresponds to the choice $\delta= \infty$, i.e., $\FSe$. In all the above cases, the algorithms were run for a maximum of 100,000 boosting iterations with $\varepsilon=10^{-4}$.
 [Top Panel] Corresponds to the Prostate cancer dataset with $n=98$ and $p=8$.
All the  coefficient profiles look similar, and they all seem to coincide with the \lasso profile (see also Figure~\ref{fig:lasso-similar}).
[Bottom Panel]  Shows the Prostate cancer dataset with a subset of samples $n=10$ with all interactions included with $p=44$. The coefficient profiles in this example are sensitive to the choice of $\delta$ and are seen to be
more constrained towards the end of the path, for decreasing $\delta$ values. The profiles are different than the \lasso profiles, as seen in Figure~\ref{fig:lasso-similar}. The regression coefficients at the end of the path correspond to
approximate \lasso solutions, for the respective values of $\delta$.}} }\label{fig:FSe-lasso-RFSe}
\end{figure}

As a final note, we point out that one can also interpret $\RFSe$ as the Frank-Wolfe algorithm in convex optimization applied to the \lasso \eqref{poi-lasso} in line with \cite{bach2012duality}.  We refer the reader to Appendix \ref{FW_appendix} for discussion of this point.

\section{A Modified Forward Stagewise Algorithm for Computing the \lasso Path}\label{dynamic_sect}
In Section~\ref{sect_lasso} we introduced the boosting algorithm $\RFSe$ (which is a very close cousin of $\FSe$) that delivers solutions to the \lasso problem \eqref{poi-lasso} for a fixed but arbitrary $\delta$, in the limit as $k \rightarrow \infty$ with $\varepsilon \approx 0$.  Furthermore, our experiments in Section~\ref{sec:experiments} suggest that $\RFSe$ may lead to estimators with good statistical properties for a wide range of values of $\delta$, provided that the value of $\delta$ is not too small. While $\RFSe$ by itself may be considered as a regularization scheme with excellent statistical properties, the boosting profile delivered by $\RFSe$ might in some cases be different from the \lasso coefficient profile, as we saw in Figure~\ref{fig:FSe-lasso-RFSe}.  Therefore in this section we investigate the following question:  is it possible to modify the $\RFSe$ algorithm, while still retaining its basic algorithmic characteristics, so that it delivers an approximate \lasso coefficient profile for any dataset?  We answer this question in the affirmative herein.\medskip

To fix ideas, let us consider producing the (approximate) \lasso path by producing a sequence of (approximate) \lasso solutions on a predefined grid of regularization parameter values $\delta$ in the interval $(0,\bar\delta]$ given by  $0 < \bar\delta_0 <  \bar\delta_1 <  \ldots < \bar\delta_K = \bar\delta$.  (A standard method for generating the grid points is to use a geometric sequence such as $\bar\delta_i = \eta^{-i} \cdot\bar\delta_0$ for $i=0, \ldots, K$, for some $\eta  \in (0,1)$.)  Motivated by the notion of warm-starts popularly used in the statistical computing literature in the context of computing a path of \lasso solutions \eqref{lasso-lag-1} via coordinate descent methods \cite{FHT2007}, we propose here a slight modification of the $\RFSe$ algorithm that sequentially updates the value of $\delta$ according to the predefined grid values $ \bar\delta_0,\bar\delta_1,\cdots,\bar\delta_K = \bar\delta$, and does so prior to each update of $\hat r^i$ and $\hat \beta^i$.  We call this method $\PATHe$, whose complete description is as follows:

\begin{center}
{\bf  Algorithm:} $\PATHe$
\end{center}

Fix the learning rate $\varepsilon > 0$,  choose values $\bar\delta_i$, $i= 0, \ldots, K$, satisfying $0 < \bar\delta_0 \le \bar\delta_1 \le \cdots \le \bar\delta_K \le \bar\delta$ such that $\varepsilon \leq \bar\delta_0$.

Initialize at $\hat r^0 = \by$, $\hat{\beta}^0 = 0$, $k = 0$ .

\bee
\item[\bf 1.] For $0 \leq k \leq K$ do the following:

\item[\bf 2.] Compute:
$j_k \in \argmax\limits_{j \in \{1, \ldots, p\}} |(\hat{r}^k)^T\bX_j|$

\item[\bf 3.] Set:
\begin{description}
\item $\hat r^{k+1} \gets \hat{r}^k - \varepsilon\left[\sgn((\hat{r}^k)^T\bX_{j_k})\bX_{j_k} + (\hat{r}^k - \by)/\bar\delta_k \right]$
\item $\hat{\beta}^{k+1}_{j_k} \gets \left(1 - \varepsilon/\bar\delta_{k}\right) \hat{\beta}^{k}_{j_k} + \varepsilon\ \sgn((\hat{r}^k)^T\bX_{j_k})$ and
$\hat{\beta}^{k+1}_j \gets \left(1 - \varepsilon/\bar\delta_{k}\right) \hat{\beta}^{k}_j \ , j \neq j_k$
\end{description}
\eee

\medskip

Notice that $\PATHe$ retains the identical structure of a forward stagewise regression type of method, and uses the same essential update structure of Step (3.) of $\RFSe$.  Indeed, the updates of $\hat r^{k+1}$ and $\hat{\beta}^{k+1}$ in $\PATHe$ are identical to those in Step (3.) of $\RFSe$ except that they use the regularization value  $\bar\delta_k$ at iteration $k$ instead of the constant value of $\delta$ as in $\RFSe$.

\paragraph{Theoretical Guarantees for $\PATHe$}
Analogous to Theorem \ref{RFSe-guarantees} for $\RFSe$, the following theorem describes properties of the $\PATHe$ algorithm.  In particular, the theorem provides rigorous guarantees about the distance between the $\PATHe$ algorithm and the \lasso coefficient profiles -- which apply to any general dataset. \medskip

\begin{theorem}\label{PATHe-guarantees} {\bf (Computational Guarantees of $\PATHe$)}  Consider the $\PATHe$ algorithm with the given learning rate $\varepsilon$ and regularization parameter sequence $\{\bar\delta_k\}$.  Let $k \ge 0$ denote the total number of iterations. Then the following holds:
\begin{itemize}
\item[(i)] (\lasso feasibility and average training error): for each $i=0, \ldots, k$, $\hat{\beta}^i$ provides an approximate solution to the \lasso problem for $\delta = \bar\delta_{i}$.  More specifically, $\hat{\beta}^i$ is feasible for the \lasso problem for $\delta = \bar\delta_{i}$, and satisfies the following suboptimality bound with respect to the entire boosting profile:
$$\displaystyle\frac{1}{k+1}\sum_{i=0}^k\left(L_n(\hat{\beta}^i) - L_{n, \bar\delta_i}^\ast\right) ~\leq~ \frac{\bar\delta\|\bX\hat{\beta}_{\text{LS}}\|_2^2}{2n\varepsilon(k+1)} + \frac{2\bar\delta\varepsilon}{n}$$
\item[(ii)] ($\ell_{1}$-shrinkage of coefficients): $\|\hat{\beta}^i\|_1 \le \bar\delta_i$ for $i=0, \ldots, k$.
\item[(iii)] (sparsity of coefficients): $\|\hat{\beta}^i\|_0 \le i$ for $i=0, \ldots, k$. \ \qed
\end{itemize}
\end{theorem}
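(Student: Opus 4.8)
The plan is to treat $\PATHe$ as a \emph{time-varying} subgradient descent method: exactly as in Propositions~\ref{FSequiv} and~\ref{RFSequiv}, the residual update $\hat r^{k+1} = \hat r^k - \varepsilon g^k$ with $g^k := \sgn((\hat r^k)^T\bX_{j_k})\bX_{j_k} + \tfrac{1}{\bar\delta_k}(\hat r^k - \by)$ is one subgradient step for the objective $f_{\bar\delta_k}$ of $\mathrm{RCM}_{\bar\delta_k}$ in~\eqref{FS-dual-prob}, with $g^k \in \partial f_{\bar\delta_k}(\hat r^k)$ and the projection onto $P_{\mathrm{res}}$ superfluous (one checks $\hat r^{k+1} = \by - \bX\hat\beta^{k+1} \in P_{\mathrm{res}}$, which simultaneously verifies the stated coefficient update). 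The only novelty relative to $\RFSe$ is that the objective changes from iteration to iteration. I would dispatch parts {\em (ii)} and {\em (iii)} first, since they are quick and feed into {\em (i)}. Part {\em (iii)} is immediate: the rescaling $\hat\beta^k \mapsto (1-\varepsilon/\bar\delta_k)\hat\beta^k$ creates no new nonzeros and the additive term touches only coordinate $j_k$. Part {\em (ii)} is an induction: from $\|\hat\beta^{k+1}\|_1 \le (1-\varepsilon/\bar\delta_k)\|\hat\beta^k\|_1 + \varepsilon$ (valid since $\varepsilon \le \bar\delta_0 \le \bar\delta_k$), the hypothesis $\|\hat\beta^k\|_1\le\bar\delta_k$ gives $\|\hat\beta^{k+1}\|_1 \le \bar\delta_k \le \bar\delta_{k+1}$ using monotonicity of $\{\bar\delta_i\}$. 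This yields $\|\hat\beta^i\|_1\le\bar\delta_i$, which is exactly the \lasso feasibility assertion in {\em (i)}.

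The heart of the proof is the average training-error bound, and here I would invoke the $\mathrm{RCM}$--\lasso duality of Appendix~\ref{lasso_dual_appendix} in the form $L_{n,\delta}^\ast = \tfrac{1}{2n}\|\by\|_2^2 - \tfrac{\delta}{n}f_\delta^\ast$. Writing the dual objective $D_\delta(r) := \tfrac{1}{2n}\|\by\|_2^2 - \tfrac{\delta}{n}f_\delta(r)$, weak duality gives $D_{\bar\delta_i}(\hat r^i) \le L_{n,\bar\delta_i}^\ast$, while feasibility (part {\em (ii)}) gives $L_n(\hat\beta^i)\ge L_{n,\bar\delta_i}^\ast$; hence the per-iteration suboptimality is sandwiched by the duality gap, $0 \le L_n(\hat\beta^i) - L_{n,\bar\delta_i}^\ast \le G_i := L_n(\hat\beta^i) - D_{\bar\delta_i}(\hat r^i)$. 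So it suffices to bound the average of $G_i$.

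The key computation I would then carry out is the identity $G_i = \tfrac{\bar\delta_i}{n}(g^i)^T(\hat r^i - \hat r_{\text{LS}})$, where $\hat r_{\text{LS}} = \by - \bX\hat\beta_{\text{LS}}$ and $g^i$ is precisely the algorithm's subgradient above. This collapses because $\bX^T\hat r_{\text{LS}} = 0$: the correlation part contributes $\sgn((\hat r^i)^T\bX_{j_i})\,\bX_{j_i}^T(\hat r^i - \hat r_{\text{LS}}) = \|\bX^T\hat r^i\|_\infty$ (using that $j_i$ attains the max and $\bX_{j_i}^T\hat r_{\text{LS}} = 0$), and the cross terms of the quadratic part with $\hat r_{\text{LS}}$ vanish for the same reason. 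Since $G_i \ge 0$, the inner product $(g^i)^T(\hat r^i - \hat r_{\text{LS}})$ is nonnegative, and therefore --- this is the crucial move that tames the varying regularization --- the bound $\bar\delta_i \le \bar\delta$ may be pulled through a \emph{nonnegative} quantity to give $G_i \le \tfrac{\bar\delta}{n}(g^i)^T(\hat r^i - \hat r_{\text{LS}})$ with a \emph{constant} coefficient. The remainder is standard subgradient telescoping: expanding $\|\hat r^{i+1} - \hat r_{\text{LS}}\|_2^2$ gives $(g^i)^T(\hat r^i - \hat r_{\text{LS}}) = \tfrac{1}{2\varepsilon}\big(\|\hat r^i - \hat r_{\text{LS}}\|_2^2 - \|\hat r^{i+1}-\hat r_{\text{LS}}\|_2^2\big) + \tfrac{\varepsilon}{2}\|g^i\|_2^2$; the subgradients obey $\|g^i\|_2 \le 2$ (unit columns give $\|\sgn(\cdot)\bX_{j_i}\|_2\le 1$, and $\|\tfrac1{\bar\delta_i}(\hat r^i-\by)\|_2 = \|\bX\hat\beta^i\|_2/\bar\delta_i \le \|\hat\beta^i\|_1/\bar\delta_i \le 1$ by part {\em (ii)}). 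Summing over $i=0,\dots,k$, the telescope leaves $\|\hat r^0 - \hat r_{\text{LS}}\|_2^2 = \|\bX\hat\beta_{\text{LS}}\|_2^2$ and $\sum_i \|g^i\|_2^2 \le 4(k+1)$; dividing by $(k+1)$ produces exactly $\tfrac{\bar\delta\|\bX\hat\beta_{\text{LS}}\|_2^2}{2n\varepsilon(k+1)} + \tfrac{2\bar\delta\varepsilon}{n}$.

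I expect the one genuine obstacle to be the time-varying $\bar\delta_i$. A naive telescoping carries iteration-dependent coefficients $\bar\delta_i$ multiplying the differences $\|\hat r^i - \hat r_{\text{LS}}\|_2^2 - \|\hat r^{i+1}-\hat r_{\text{LS}}\|_2^2$, which need not be signed, so one cannot replace $\bar\delta_i$ by $\bar\delta$ term by term (Abel summation would leave an uncontrolled $\sum_i(\bar\delta_i-\bar\delta_{i-1})\|\hat r^i-\hat r_{\text{LS}}\|_2^2$). What rescues the argument is the \emph{universal} comparison point $\hat r_{\text{LS}}$: because it annihilates $\bX^T\,\cdot\,$ for every $\delta$ at once, the duality-gap identity is clean, and the resulting inner product is nonnegative, which is exactly what permits pulling the uniform bound $\bar\delta_i \le \bar\delta$ out \emph{before} telescoping and thereby reducing the time-varying problem to the constant-coefficient calculation already implicit in Theorem~\ref{RFSe-guarantees}.
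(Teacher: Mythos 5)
Your proposal is correct and follows essentially the same route as the paper's proof in Appendix~\ref{redfolder}: your duality gap $G_i$ is exactly the paper's $\tfrac{\bar\delta_i}{n}\,\omega_{\bar\delta_i}(\hat{\beta}^i)$ from Proposition~\ref{LASSO-dual}, your inline telescoping is the paper's Proposition~\ref{simpleprop} applied with $x = \hat r_{\text{LS}}$, and your observation that nonnegativity of the per-iteration gap is what licenses replacing $\bar\delta_i$ by $\bar\delta$ is the same step the paper performs when it passes from its stronger $\tfrac{1}{\bar\delta_i}$-weighted bound to the stated one.
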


\medskip

\begin{corollary} {\bf ($\PATHe$ approximates the \lasso path)}
For every fixed $\varepsilon > 0$ and $k \rightarrow \infty$  it holds that:
$$\limsup_{k \rightarrow \infty} \;\; \displaystyle\frac{1}{k+1}\sum_{i=0}^k\left(L_n(\hat{\beta}^i) - L_{n, \bar\delta_i}^\ast\right)  \leq  \frac{2\bar\delta\varepsilon}{n} \ , $$
(and the quantity on the right side of the above bound goes to zero as $\varepsilon \rightarrow 0$). \qed
\end{corollary}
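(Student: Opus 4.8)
The plan is to obtain this corollary as an immediate consequence of part~(i) of Theorem~\ref{PATHe-guarantees}, simply by passing to the limit in the finite-$k$ bound. The theorem already supplies, for every $k \ge 0$, the inequality
\[
\frac{1}{k+1}\sum_{i=0}^k\left(L_n(\hat{\beta}^i) - L_{n, \bar\delta_i}^\ast\right) ~\leq~ \frac{\bar\delta\|\bX\hat{\beta}_{\text{LS}}\|_2^2}{2n\varepsilon(k+1)} + \frac{2\bar\delta\varepsilon}{n} \ ,
\]
so essentially all the work has already been done. The right-hand side is the sum of a term that decays like $O(1/k)$ and a term that is constant in $k$, and the entire argument amounts to isolating these two behaviors.

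First I would fix $\varepsilon > 0$ and observe that the quantities $\bar\delta$, $\|\bX\hat{\beta}_{\text{LS}}\|_2$, and $n$ appearing in the first term on the right are all fixed, data-dependent constants that are independent of the iteration counter $k$. Hence $\frac{\bar\delta\|\bX\hat{\beta}_{\text{LS}}\|_2^2}{2n\varepsilon(k+1)} \to 0$ as $k \to \infty$, while the second term $\frac{2\bar\delta\varepsilon}{n}$ does not depend on $k$ at all. Taking $\limsup_{k\to\infty}$ of both sides of the displayed inequality, and using the elementary fact that the $\limsup$ of a sum is at most the sum of the $\limsup$s, yields
\[
\limsup_{k \to \infty} \frac{1}{k+1}\sum_{i=0}^k\left(L_n(\hat{\beta}^i) - L_{n, \bar\delta_i}^\ast\right) ~\leq~ 0 + \frac{2\bar\delta\varepsilon}{n} ~=~ \frac{2\bar\delta\varepsilon}{n} \ ,
\]
which is precisely the claimed bound.

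Finally, the parenthetical remark follows by inspection: the limiting bound $\frac{2\bar\delta\varepsilon}{n}$ is linear in $\varepsilon$ with $\bar\delta$ and $n$ held fixed, so it tends to $0$ as $\varepsilon \to 0^+$. I do not anticipate any genuine obstacle here, since the corollary is a direct limiting statement extracted from the already-established per-iteration guarantee. The only point requiring a modicum of care is the routine observation that the $O(1/k)$ term is controlled by constants that do not grow with $k$, so that it genuinely vanishes in the limit rather than merely being dominated; once that is noted, the result is immediate.
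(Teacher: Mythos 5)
Your proposal is correct and is exactly the argument the paper intends: the corollary is an immediate limiting consequence of part~(i) of Theorem~\ref{PATHe-guarantees}, obtained by letting $k \rightarrow \infty$ so that the $O(1/k)$ term vanishes and only the constant term $\tfrac{2\bar\delta\varepsilon}{n}$ survives. Nothing further is needed.
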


The proof of Theorem \ref{PATHe-guarantees} is presented in Appendix \ref{redfolder}.

\paragraph{Interpreting the computational guarantees} Let us now provide some interpretation of the results stated in Theorem \ref{PATHe-guarantees}.
Recall that Theorem \ref{RFSe-guarantees} presented bounds on the distance between the training errors achieved by the boosting algorithm $\RFSe$ and
\lasso training errors for a \emph{fixed} but arbitrary $\delta$ that is specified {\it a priori}. The message in Theorem~\ref{PATHe-guarantees} generalizes this notion to a {\it family} of \lasso solutions
corresponding to a {\it grid} of $\delta$ values. The theorem thus quantifies how the boosting algorithm $\PATHe$ \emph{simultaneously} approximates a path  of \lasso solutions.

Part {\em (i)} of Theorem \ref{PATHe-guarantees} first implies that the sequence of regression coefficient vectors $\{\hat{\beta}^i\}$ is feasible along the \lasso path, for the \lasso problem \eqref{poi-lasso} for the sequence of regularization parameter values $\{\bar\delta_i\}$.  In considering guarantees with respect to the training error, we would ideally like guarantees that hold across the entire spectrum of $\{\bar\delta_i\}$ values.  While part {\em (i)} does not provide such strong guarantees, part {\em (i)} states that these quantities will be sufficiently small \emph{on average}.  Indeed, for a fixed $\varepsilon$ and as $k \to \infty$, part {\em (i)} states that the average of the differences between the training errors produced by the algorithm and the optimal training errors is at most $\frac{2\bar\delta\varepsilon}{n}$. This non-vanishing bound (for $\varepsilon>0$) is a consequence of the fixed learning rate $\varepsilon$ used in $\PATHe$ -- such bounds were also observed for $\RFSe$ and $\FSe$.\medskip

Thus on average, the training error of the model $\hat{\beta}^i$ will be sufficiently close (as controlled by the learning rate $\varepsilon$) to the optimal training error for the corresponding regularization parameter
 value $\bar\delta_i$.
 In summary, while $\PATHe$ provides the most amount of flexibility in terms of controlling for model complexity since it allows for \emph{any} (monotone) sequence of regularization parameter values in the range $(0, \bar\delta]$, this freedom comes at the cost of weaker training error guarantees with respect to any particular $\bar\delta_i$ value (as opposed to $\RFSe$ which provides strong guarantees with respect to the fixed value $\delta$). Nevertheless, part {\em (i)} guarantees that the training errors will be sufficiently small on average across the entire path of regularization parameter values explored by the algorithm.

\section{Some Computational Experiments}\label{sec:experiments}\label{1999}

We consider an array of examples exploring statistical properties of the different boosting algorithms studied herein.
We consider different types of synthetic and real datasets, which are briefly described here.

\paragraph{Synthetic datasets}
We considered synthetically generated datasets of the following types:
\begin{itemize}
\item {\bf{Eg-A.}} Here the data matrix $\bX$ is generated from a multivariate normal distribution, i.e., for each $i = 1, \ldots, n$, $\mathbf{x}_i \sim \text{MVN}(0 , \Sigma)$. Here $\mathbf{x}_i$ denotes the $i^\text{th}$ row of $\bX$ and $\Sigma = (\sigma_{ij}) \in \bbR^{p \times p}$ has all off-diagonal entries equal to $\rho$ and all diagonal entries equal to one. The response
$\by \in \bbR^{n}$ is generated as $\by = \bX \beta^\text{pop} + \epsilon$, where $\epsilon_{i} \stackrel{\text{iid}}{\sim} N(0, \sigma^2)$. The underlying regression coefficient was taken to be sparse with
$\beta^\text{pop}_{i} = 1$ for all $i \leq 5$ and $\beta^\text{pop}_{i}=0$ otherwise. $\sigma^2$ is chosen so as to control the signal to noise ratio $\text{SNR} := \text{Var}( \mathbf{x}'\beta)/\sigma^2.$

Different values of SNR, $n,p$ and $\rho$ were taken and they have been specified in our results when and where appropriate.

\item {\bf{Eg-B.}}  Here the datasets are generated similar to above, with $\beta^\text{pop}_{i} = 1$ for $i \leq 10$ and $\beta^\text{pop}_{i}=0$ otherwise.
We took the value of SNR=1in this example.
\end{itemize}

\paragraph{Real datasets}
We considered four different publicly available microarray datasets as described below.
\begin{itemize}
\item {\bf{Leukemia dataset.}} This dataset, taken from~\cite{dettling2003boosting}, was processed to have $n=72$ and $p=500$.
$\by$ was created as $\by = \bX\beta^\text{pop} + \epsilon$; with $\beta^\text{pop}_{i} =1$ for all $i \leq 10$ and zero otherwise.

\item {\bf{Golub dataset.}} This dataset, taken from the R package {\texttt{mpm}}, was processed to have $n=73$ and $p=500$, with artificial responses generated as above.

\item {\bf{Khan dataset.}} This dataset, taken from the website of~\cite{ESLBook}, was processed to have $n=73$ and $p=500$, with artificial responses generated as above.

\item {\bf {Prostate dataset.}}
This dataset, analyzed in~\cite{LARS}, was processed to create three types of different datasets:
(a) the original dataset with $n=97$ and $p=8$, (b) a dataset with $n=97$ and $p=44$, formed by extending the covariate space to include second order interactions, and
(c) a third dataset with $n=10$ and $p=44$, formed by subsampling the previous dataset.
\end{itemize}
For more detail on the above datasets, we refer the reader to the Appendix \ref{sec:append:comp-details}.

Note that in all the examples we standardized $\bX$ such that the columns have unit $\ell_{2}$ norm, before running the different algorithms studied herein.

\subsection{Statistical properties of boosting algorithms: an empirical study}
We performed some experiments to better understand the statistical behavior of the different boosting methods described in this paper. We summarize our findings here; for details (including tables, figures and discussions) we refer the reader to
Appendix, Section~\ref{sec:append:comp-details}.

\paragraph{Sensitivity of the Learning Rate in \ebs and $\FSe$}
We explored how the training and test errors for \ebs and $\FSe$ change as a function of
the number of boosting iterations and the learning rate.  We observed that the best predictive models were sensitive to the choice of $\varepsilon$---the best models were obtained at values larger than zero and
smaller than one. When compared to \lasso, stepwise regression~\cite{LARS} and $\FS_0$~\cite{LARS}; $\FSe$ and \ebs were found to be as good as the others, in some cases the better than the rest.

\paragraph{Statistical properties of  $\RFSe$, \lasso and $\FSe$: an empirical study} We performed some experiments to evaluate the performance of $\RFSe$, in terms of predictive accuracy and
sparsity of the optimal model, versus the more widely known methods $\FSe$ and \lassoperiod.
We found that when $\delta$ was larger than the best $\delta$ for the \lasso (in terms of obtaining a model with the best predictive performance), $\RFSe$ delivered a model
with excellent statistical properties -- $\RFSe$ led to sparse solutions and the predictive performance was as good as, and in some cases better than, the \lasso solution.
We observed that the choice of $\delta$ does not play a very crucial role in the $\RFSe$ algorithm, once it is chosen to be reasonably large; indeed the number of boosting iterations play a more important role.
The best models delivered by $\RFSe$ were more sparse than $\FSe$.

\subsection*{Acknowledgements}
The authors will like to thank Alexandre Belloni, Jerome Friedman, Trevor Hastie, Arian Maleki and Tomaso Poggio for helpful discussions and encouragement. A preliminary unpublished version of some of the results herein was posted on the ArXiv~\cite{BoostFreundGM13}.

\bibliographystyle{abbrv}

{\small{\bibliography{../../GFM-papers}}}

\clearpage

\appendix

\section{Technical Details and Supplementary Material}\label{sec-appendix}

\subsection{Additional Details for Section~1}

\medskip

\subsubsection{Figure showing Training error versus $\ell_{1}$-shrinkage bounds}\label{dance}
Figure~\ref{fig:bounds-lasso} showing profiles of $\ell_{1}$ norm of the regression coefficients  versus training error for \ebs, $\FSe$ and \lassoperiod.
\begin{figure}[h!]
\begin{center}
\scalebox{0.95}[0.85]{\begin{tabular}{l c c c}
\multicolumn{4}{c}{$\ell_{1}$ shrinkage versus data-fidelity tradeoffs: \ebs, $\FSe$, and \lasso}\\
&&&\\
\rotatebox{90}{\sf {\scriptsize {~~~~~~~~~~~~~~~~~~~~~~~Training Error}}}&\includegraphics[width=0.31\textwidth,height=0.25\textheight,  trim = 1.1cm 1.2cm 1cm 2cm, clip = true ]{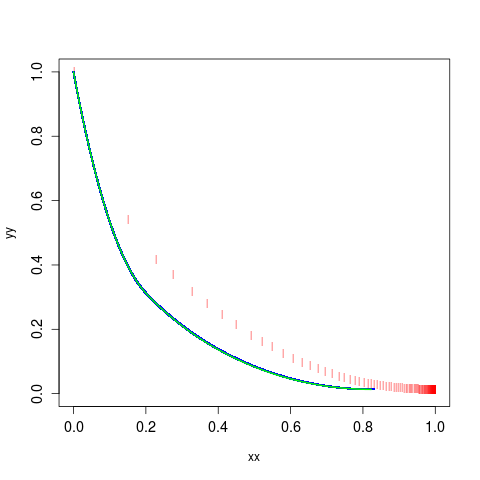}&
\includegraphics[width=0.31\textwidth,height=0.25\textheight,  trim = 1.1cm 1.2cm 1cm 2cm, clip = true ]{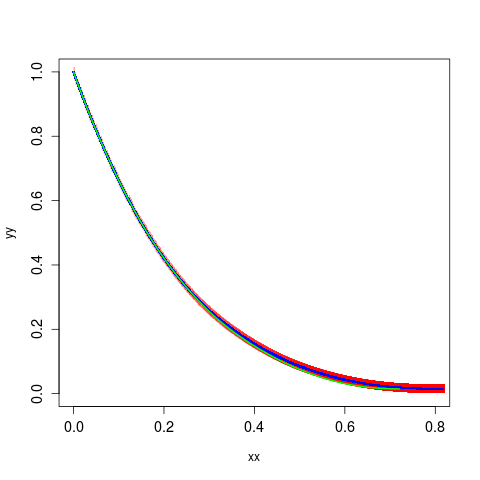}&
\includegraphics[width=0.31\textwidth,height=0.25\textheight,  trim = 1.1cm 1.2cm 1cm 2cm, clip = true ]{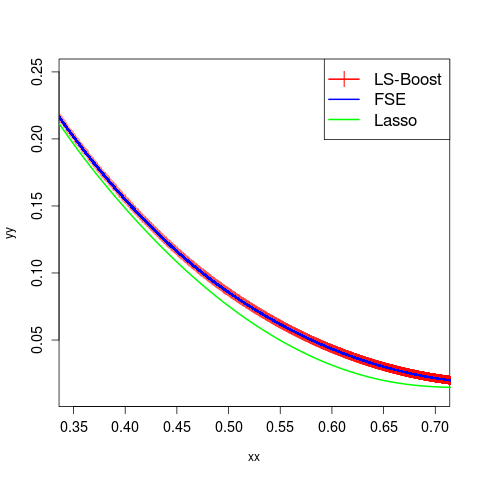} \\
&\scriptsize{\sf {$\ell_{1}$ shrinkage of coefficients}} &\scriptsize{\sf {$\ell_{1}$ shrinkage of coefficients}}  & \scriptsize{\sf {$\ell_{1}$ shrinkage of coefficients}}  \\
\end{tabular}}
\caption{{\small{Figure showing profiles of $\ell_{1}$ norm of the regression coefficients  versus training error for \ebs, $\FSe$ and \lassoperiod.  [Left panel] Shows profiles for a synthetic dataset where the covariates are drawn from a Gaussian distribution with pairwise correlations $\rho=0.5$.  The true $\beta$ has ten non-zeros with $\beta_{i} =1$ for $i=1, \ldots, 10$, and $\text{SNR}=1$.  Here we ran \ebs with $\varepsilon=1$ and ran $\FSe$ with $\varepsilon=10^{-2}$.  The middle (and right) panel profiles corresponds to the Prostate cancer dataset (described in Section~\ref{sec:experiments}). Here  we ran \ebs with $\varepsilon=0.01$ and we ran $\FSe$ with $\varepsilon=10^{-5}$.  The right panel figure is a zoomed-in version of the middle panel in order to highlight the difference in profiles between \ebs, $\FSe$ and \lassoperiod.
The vertical axes have been normalized so that the training error at $k=0$ is one, and the horizontal axes have been scaled to the unit interval (to express the $\ell_{1}$-norm of $\hat{\beta}^k$ as a fraction of the maximum).}} }\label{fig:bounds-lasso}
\end{center}
\end{figure}

\subsection{Additional Details for Section~2}

\subsubsection{Properties of Convex Quadratic Functions}\label{appendix_quad}

Consider the following quadratic optimization problem (QP) defined as:
$$h^* := \min\limits_{x \in \mathbb{R}^n} h(x) :=  \tfrac{1}{2}x^TQx + q^Tx + q^o \ , $$
where $Q$ is a symmetric positive semi-definite matrix, whereby $h(\cdot)$ is a convex function.  We assume that $Q \ne 0$, and recall that $\lambda_{\pmin}(Q)$ denotes the smallest nonzero (and hence positive) eigenvalue of $Q$.\medskip

\begin{proposition}\label{qp}
If $h^* > -\infty$, then for any given $x$, there exists an optimal solution $x^*$ of (QP) for which
$$\|x-x^*\|_2 \le \sqrt{\frac{2(h(x) - h^*)}{\lambda_{\pmin}(Q)}} \ . $$
Also, it holds that $$\|\nabla h(x)\|_2 \ge \sqrt{\frac{\lambda_{\pmin}(Q)\cdot(h(x)-h^*)}{2}} \ . $$
\end{proposition}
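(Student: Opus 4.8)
The plan is to reduce everything to the spectral decomposition of $Q$, where the quadratic separates into independent one-dimensional pieces, and then to dispatch the two claims in sequence — the distance bound first, and the gradient bound as a corollary of it.

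First I would write $Q = U\Lambda U^T$ with $U$ orthogonal and $\Lambda = \diag(\lambda_1, \ldots, \lambda_n)$, $\lambda_i \ge 0$, and change variables to $y = U^Tx$, $\tilde q = U^Tq$. In these coordinates $h$ becomes the separable sum $\sum_i (\tfrac{1}{2}\lambda_i y_i^2 + \tilde q_i y_i) + q^o$. Since $h^* > -\infty$, each one-dimensional piece with $\lambda_i = 0$ must have $\tilde q_i = 0$ (otherwise $h$ is linear and unbounded below along that direction); this is exactly the condition $q \in \mathrm{Range}(Q)$, and it guarantees the set of minimizers is nonempty. The minimizers are characterized by $\nabla h(x) = Qx + q = 0$, i.e.\ $y_i = y_i^* := -\tilde q_i/\lambda_i$ for every $i$ with $\lambda_i > 0$, while the coordinates with $\lambda_i = 0$ remain free.

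Then I would complete the square to obtain the clean identity $h(x) - h^* = \sum_{i : \lambda_i > 0} \tfrac{1}{2}\lambda_i (y_i - y_i^*)^2$. For the distance bound I choose $x^*$ to be the optimal solution closest to $x$: in $y$-coordinates, set the positive-eigenvalue components equal to $y_i^*$ and match the free (null-space) components to those of $y$. Then $\|x - x^*\|_2^2 = \sum_{i : \lambda_i > 0}(y_i - y_i^*)^2$, and since every nonzero eigenvalue satisfies $\lambda_i \ge \lambda_{\pmin}(Q)$, the identity yields $h(x) - h^* \ge \tfrac{1}{2}\lambda_{\pmin}(Q)\|x - x^*\|_2^2$, which rearranges to the first claim. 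For the gradient bound I would avoid a second spectral computation and instead combine the distance bound with convexity: the gradient inequality gives $h(x) - h^* = h(x) - h(x^*) \le \nabla h(x)^T(x - x^*) \le \|\nabla h(x)\|_2\,\|x - x^*\|_2$, and substituting $\|x - x^*\|_2 \le \sqrt{2(h(x) - h^*)/\lambda_{\pmin}(Q)}$ and cancelling a common factor of $\sqrt{h(x) - h^*}$ (the case $h(x) = h^*$ being trivial) produces exactly $\|\nabla h(x)\|_2 \ge \sqrt{\lambda_{\pmin}(Q)(h(x) - h^*)/2}$. The factor $\tfrac{1}{2}$ rather than $2$ is the price of routing through Cauchy--Schwarz; a direct computation $\|\nabla h(x)\|_2^2 = \sum_{i:\lambda_i>0}\lambda_i^2(y_i-y_i^*)^2$ in fact gives the stronger constant, so the stated inequality is comfortably satisfied.

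The main obstacle I anticipate is purely the bookkeeping around the rank-deficient case: one must verify that $h^* > -\infty$ forces $\tilde q_i = 0$ on the null space so that minimizers exist, and one must take $x^*$ to be the \emph{nearest} minimizer (projecting only the range-space components) so that the null-space directions contribute nothing to $\|x - x^*\|_2$. Once this is set up correctly, both inequalities fall directly out of the single completed-square identity together with the eigenvalue lower bound $\lambda_i \ge \lambda_{\pmin}(Q)$.
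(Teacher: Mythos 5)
Your proposal is correct and follows essentially the same route as the paper's proof: both diagonalize $Q$, observe that $h^*>-\infty$ forces $q$ into the range of $Q$, take $x^*$ to be the minimizer that preserves the null-space component of $x$ (the paper's $x^*=[I-PP^T]x-PD^{-1}P^Tq$ is exactly your nearest minimizer), and derive the gradient bound from the convexity inequality plus Cauchy--Schwarz and the distance bound. Your closing remark that a direct computation yields the stronger constant $\sqrt{2\lambda_{\pmin}(Q)(h(x)-h^*)}$ is also accurate.
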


\noindent {\bf Proof:} The result is simply manipulation of linear algebra.  Let us assume without loss of generality that $q^o = 0$.  If $h^* > -\infty$, then (QP) has an optimal solution $x^*$, and the set of optimal solutions are characterized by the gradient condition $$0 = \nabla h(x) = Qx +q \ . $$  Now let us write the sparse eigendecomposition of $Q$ as $Q=PDP^T$ where $D$ is a diagonal matrix of non-zero eigenvalues of $Q$ and the columns of $P$ are orthonormal, namely $P^TP=I$.  Because (QP) has an optimal solution, the system of equations $Qx = -q$ has a solution, and let $\tilde x$ denote any such solution.  Direct manipulation establishes:
$$PP^Tq = -PP^TQ\tilde x = -PP^TPDP^T\tilde x =  -PDP^T\tilde x = - Q\tilde x = q \ . $$
Furthermore, let $\hat x:=-PD^{-1}P^Tq$.  It is then straightforward to see that $\hat x$ is an optimal solution of (QP) since in particular:

$$Q\hat x = -PDP^TPD^{-1}P^Tq = -PP^Tq = -q \ , $$
and hence
$$h^* = \tfrac{1}{2}\hat x^TQ\hat x + q^T\hat x = -\tfrac{1}{2}\hat x^TQ\hat x = -\tfrac{1}{2}q^TPD^{-1}P^TPDP^TPD^{-1}P^Tq = -\tfrac{1}{2}q^TPD^{-1}P^Tq \ . $$

Now let $x$ be given, and define $x^*:=[I-PP^T]x -PD^{-1}P^Tq$.  Then just as above it is straightforward to establish that $Qx^*=-q$ whereby $x^*$ is an optimal solution.  Furthermore, it holds that:
$$\begin{array}{rcl}
\|x - x^*\|_2^2 &=& (q^TPD^{-1}+x^TP)P^TP(D^{-1}P^Tq + P^Tx) \\ \\
&=& (q^TPD^{-\tfrac{1}{2}} + x^TPD^{\tfrac{1}{2}})D^{-1}(D^{-\tfrac{1}{2}}P^Tq + D^{\tfrac{1}{2}}P^Tx) \\ \\
&\le& \tfrac{1}{\lambda_{\pmin}(Q)}(q^TPD^{-\tfrac{1}{2}} + x^TPD^{\tfrac{1}{2}})(D^{-\tfrac{1}{2}}P^Tq + D^{\tfrac{1}{2}}P^Tx)\\ \\
&=& \tfrac{1}{\lambda_{\pmin}(Q)}(q^TPD^{-1}P^Tq + x^TPDP^Tx + 2 x^TPP^Tq) \\ \\
&=& \tfrac{1}{\lambda_{\pmin}(Q)}(-2h^* + x^TQx + 2 x^Tq) \\ \\
&=& \tfrac{2}{\lambda_{\pmin}(Q)}(h(x) - h^*) \ ,
\end{array}$$ and taking square roots establishes the first inequality of the proposition.

Using the gradient inequality for convex functions, it holds that:
$$\begin{array}{rcl}
h^* = h(x^*) & \ge & h(x) + \nabla h(x)^T(x^*-x) \\ \\
&\ge & h(x) - \|\nabla h(x)\|_2 \|x^*- x\|_2 \\ \\
&\ge & h(x) - \|\nabla h(x)\|_2 \sqrt{\frac{2(h(x) - h^*)}{\lambda_{\pmin}(Q)}} \ ,
\end{array}$$ and rearranging the above proves the second inequality of the proposition.\qed

\subsubsection{Proof of Theorem \ref{august24-2}}\label{ebs_main_proof}
We first prove part {\em (i)}. Utilizing \eqref{ebs_res_update}, which states that $\hat{r}^{k+1} = \hat{r}^k -\varepsilon \left((\hat{r}^k)^T\bX_{j_k} \right)\bX_{j_k}$, we have:
\begin{equation}\label{amy}\begin{array}{rcl}
L_n(\hat{\beta}^{k+1}) &=& \tfrac{1}{2n}\|\hat{r}^{k+1}\|_2^2 \\ \\
&=& \tfrac{1}{2n}\|\hat{r}^k -\varepsilon \left((\hat{r}^k)^T\bX_{j_k}\right)\bX_{j_k}  \|_2^2 \\ \\
&=& \tfrac{1}{2n}\|\hat{r}^k\|_2^2 -\tfrac{1}{n}\varepsilon \left((\hat{r}^k)^T\bX_{j_k} \right)^2 + \tfrac{1}{2n}\varepsilon^2 \left((\hat{r}^k)^T\bX_{j_k} \right)^2\\ \\
&=& L_n(\hat{\beta}^{k}) -\tfrac{1}{2n}\varepsilon(2-\varepsilon) \left((\hat{r}^k)^T\bX_{j_k} \right)^2  \\ \\
&=& L_n(\hat{\beta}^{k}) -\tfrac{1}{2n}\varepsilon(2-\varepsilon) n^2 \|\nabla L_n(\hat{\beta}^k)\|_\infty^2  \ , \\
\end{array}\end{equation}
(where the last equality above uses \eqref{drew2}), which yields:
\begin{equation}\label{drew3}
L_n(\hat{\beta}^{k+1}) - L_n^* =  L_n(\hat{\beta}^{k}) -L_n^* -\tfrac{n}{2}\varepsilon(2-\varepsilon) \|\nabla L_n(\hat{\beta}^k)\|_\infty^2 \ .
\end{equation}
We next seek to bound the right-most term above.  We will do this by invoking Proposition \ref{qp}, which presents two important properties of convex quadratic functions.  Because $L_n(\cdot)$ is a convex quadratic function of the same format as Proposition \ref{qp} with $h(\cdot) \gets L_n(\cdot)$, $Q \gets \tfrac{1}{n}\bX^T\bX$, and $h^* \gets L_n^*$, it follows from the second property of Proposition \ref{qp} that
$$\|\nabla L_n(\beta)\|_2 \ge \sqrt{\frac{\lambda_\pmin (\tfrac{1}{n}\bX^T\bX) (L_n(\beta) -L_n^*)}{2}} = \sqrt{\frac{\lambda_\pmin (\bX^T\bX) (L_n(\beta) -L_n^*)}{2n}} \ . $$
Therefore
$$\|\nabla L_n(\beta)\|_\infty^2 \ge \tfrac{1}{p}\|\nabla L_n(\beta)\|_2^2 \ge \frac{\lambda_\pmin (\bX^T\bX) (L_n(\beta) -L_n^*)}{2np} \ . $$
Substituting this inequality into \eqref{drew3} yields after rearranging:
\begin{equation}\label{drew4}
L_n(\hat{\beta}^{k+1}) - L_n^* \le  (L_n(\hat{\beta}^{k}) -L_n^*)\left(1-\frac{\varepsilon(2-\varepsilon)\lambda_{\pmin}(\bX^T\bX)}{4p} \right) = (L_n(\hat{\beta}^{k}) -L_n^*) \cdot \gamma \ .
\end{equation}
Now note that $L_n(\hat{\beta}^0) = L_n(0)= \frac{1}{2n}\|\by\|_2^2$ and
\begin{equation*}
L_n(\hat{\beta}^0) - L_n^\ast = \tfrac{1}{2n}\|\by\|_2^2 - \tfrac{1}{2n}\|\by - \bX\hat{\beta}_{\text{LS}}\|_2^2 = \tfrac{1}{2n}\|\by\|_2^2 - \tfrac{1}{2n}(\|\by\|_2^2 - 2\by^T\bX\hat{\beta}_{\text{LS}} + \|\bX\hat{\beta}_{\text{LS}}\|_2^2) = \tfrac{1}{2n}\|\bX\hat{\beta}_{\text{LS}}\|_2^2 \ ,
\end{equation*}
where the last equality uses the normal equations \eqref{grad2}. Then {\em(i)} follows by using elementary induction and combining the above with \eqref{drew4}:
$$L_n(\hat{\beta}^k) - L_n^\ast \le (L_n(\hat{\beta}^0) - L_n^\ast) \cdot \gamma^k = \tfrac{1}{2n}\|\bX\hat{\beta}_{\text{LS}}\|_2^2 \cdot \gamma^k \ . $$

To prove {\em (ii)}, we invoke the first inequality of Proposition \ref{qp}, which in this context states that
$$\|\hat{\beta}^k -\hat{\beta}_{\text{LS}}\|_2 \le \frac{\sqrt{2(L_n(\hat{\beta}^k) - L_n^*)}}{\sqrt{\lambda_{\pmin}(\tfrac{1}{n}\bX^T\bX)}} = \frac{\sqrt{2n(L_n(\hat{\beta}^k) - L_n^*)}}{\sqrt{\lambda_{\pmin}(\bX^T\bX)}} \ . $$ Part {\em (ii)} then follows by substituting the bound on $(L_n(\hat{\beta}^k) - L_n^*)$ from {\em (i)} and simplifying terms.  Similarly, the proof of {\em (iii)} follows from the observation that
$\|\bX\hat{\beta}^k- \bX\hat{\beta}_{\text{LS}} \|_2 = \sqrt{2n(L_n(\hat{\beta}^k) - L_n^*)}$ and then substituting the bound on $(L_n(\hat{\beta}^k) - L_n^*)$ from {\em (i)} and simplifying terms.

To prove {\em (iv)}, define the point $\tilde{\beta}^k := \hat{\beta}^k + \tilde{u}_{j_k}e_{j_k}$.  Then using similar arithmetic as in \eqref{amy} one obtains:
$$L_n^* \le L_n(\tilde{\beta}^k) = L_n(\hat{\beta}^k) - \tfrac{1}{2n}\tilde{u}_{j_k}^2 \ , $$
where we recall that $\tilde{u}_{j_k} = (\hat{r}^k)^T\bX_{j_k}$. This inequality then rearranges to
\begin{equation}\label{step_size_bound}
|\tilde{u}_{j_k}| ~\leq~ \sqrt{2n(L_n(\hat{\beta}^k)-L_n^*)} ~\leq~ \|\bX\hat{\beta}_{\text{LS}}\|_2 \cdot \gamma^{k/2} \ ,
\end{equation}
where the second inequality follows by substituting the bound on $(L_n(\hat{\beta}^i) - L_n^*)$ from {\em (i)}. Recalling \eqref{grad_norm} and \eqref{drew2}, the above is exactly part {\em (iv)}.

Part {\em (v)} presents two distinct bounds on $\|\hat{\beta}^k\|_1$, which we prove independently. To prove the first bound, let $\hat{\beta}_{\text{LS}}$ be any least-squares solution, which therefore satisfies \eqref{grad2}.  It is then elementary to derive using similar manipulation as in \eqref{amy} that for all $i$ the following holds:
\begin{equation}\label{there}\|\bX(\hat{\beta}^{i+1} - \hat{\beta}_{\text{LS}})\|_2^2 = \|\bX(\hat{\beta}^{i} - \hat{\beta}_{\text{LS}})\|_2^2 - (2\varepsilon - \varepsilon^2) \tilde{u}_{j_i}^2 \end{equation}
which implies that
\begin{equation}\label{subgrad_sum} (2\varepsilon - \varepsilon^2)\sum_{i=0}^{k-1} \tilde{u}_{j_i}^2 = \|\bX(\hat{\beta}^{0} - \hat{\beta}_{\text{LS}})\|_2^2 - \|\bX(\hat{\beta}^{k} - \hat{\beta}_{\text{LS}})\|_2^2 = \|\bX\hat{\beta}_{\text{LS}}\|_2^2 - \|\bX(\hat{\beta}^{k} - \hat{\beta}_{\text{LS}})\|_2^2 \ . \end{equation}
Then note that $$\|\hat{\beta}^k\|_1 \le \|(\varepsilon\tilde{u}_{j_0}, \ldots, \varepsilon\tilde{u}_{j_{k-1}})\|_1 \le \sqrt{k}\varepsilon\|(\tilde{u}_{j_0}, \ldots, \tilde{u}_{j_{k-1}})\|_2 = \sqrt{k}\sqrt{\tfrac{\varepsilon}{2-\varepsilon}}\sqrt{\|\bX\hat{\beta}_{\text{LS}}\|^2_2 - \|\bX\hat{\beta}_{\text{LS}}-\bX\hat{\beta}^k\|^2_2} \ , $$ where the last equality is from \eqref{subgrad_sum}.

To prove the second bound in {\em (v)}, noting that $\hat{\beta}^k = \sum_{i = 0}^{k-1}\varepsilon\tilde{u}_{j_i}e_{j_i}$, we bound $\|\hat{\beta}^k\|_1$ as follows:
\begin{align*}
\|\hat{\beta}^k\|_1 \leq \varepsilon\sum_{i = 0}^{k-1}|\tilde{u}_{j_i}| ~&\leq~ \varepsilon\|\bX\hat{\beta}_{\text{LS}}\|_2\sum_{i = 0}^{k-1}\gamma^{i/2}\\
&= \frac{\varepsilon\|\bX\hat\beta_{LS}\|_2}{1 - \sqrt{\gamma}}\left(1 - \gamma^{k/2}\right) \ ,
\end{align*}
where the second inequality uses \eqref{step_size_bound} for each $i \in \{0,\ldots,k-1\}$ and the final equality is a geometric series, which completes the proof of {\em (v)}.  Part {\em (vi)} is simply the property of \ebs~that derives from the fact that $\hat{\beta}^0:=0$ and at every iteration at most one coordinate of $\beta$ changes status from a zero to a non-zero value.  \qed

\subsubsection{Additional properties of \textsc{LS-Boost}$(\varepsilon)$}\label{sec-add-results-ebs}
We present two other interesting properties of the \ebs~algorithm, namely an additional bound on the correlation between residuals and predictors, and a bound on the $\ell_{2}$-shrinkage of the regression coefficients.  Both are presented in the following proposition.\medskip

\begin{proposition}{\bf (Two additional properties of \textsc{LS-Boost}$(\varepsilon)$)}\label{prop-ebs-grad-shr-1}
Consider the iterates of the \ebs algorithm after $k$ iterations and consider the linear convergence rate coefficient $\gamma$:
$$ \gamma := \left(1-\frac{\varepsilon(2-\varepsilon)\lambda_{\pmin}(\bX^T\bX)}{4p} \right).$$
\item[(i)] There exists an index $i \in \{0, \ldots, k\}$ for which the $\ell_{\infty}$ norm of the gradient vector of the least squares loss function evaluated at $\hat{\beta}^i$ satisfies:
\begin{equation}\label{four}
\small{\|\nabla L_n(\hat \beta^i)\|_\infty \ \ = \ \ \tfrac{1}{n}\|\bX^T\hat{r}^i\|_\infty \ \  \le \ \ \min\left\{\frac{\sqrt{\|\bX\hat{\beta}_{\text{LS}}\|^2_2 - \|\bX\hat{\beta}_{\text{LS}}-\bX\hat{\beta}^{k+1}\|^2_2}}{n \sqrt{\varepsilon(2 - \varepsilon)(k+1)}} \ , \tfrac{1}{n}\|\bX\hat{\beta}_{\text{LS}}\|_2 \cdot \gamma^{k/2}\right\} \ .}
\end{equation}
\item[(ii)] Let $J_\ell$ denote the number of iterations of \ebs, among the first $k$ iterations, where the algorithm takes a step in
coordinate $\ell$, for $\ell=1, \ldots, p$, and let $J_{\max}:=\max\{J_1, \ldots, J_p\}$.  Then the following bound on the shrinkage of $\hat{\beta}^k$ holds:
\begin{equation}\label{bound-l2-norm-ebs}
\|\hat{\beta}^k\|_2 \le  \sqrt{J_{\max}}\sqrt{\frac{\varepsilon}{2-\varepsilon}}\sqrt{\|\bX\hat{\beta}_{\text{LS}}\|^2_2 - \|\bX\hat{\beta}_{\text{LS}}-\bX\hat{\beta}^k\|^2_2} \ .
\end{equation} \qed

\end{proposition}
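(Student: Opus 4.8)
The plan is to derive both parts directly from machinery already assembled in the proof of Theorem~\ref{august24-2}. The three key ingredients are: the identity $|\tilde{u}_{j_i}| = n\|\nabla L_n(\hat{\beta}^i)\|_\infty$ from \eqref{drew2}, which translates statements about the step-sizes into statements about the gradient norm; the telescoping ``energy'' identity \eqref{subgrad_sum}, which equates $\varepsilon(2-\varepsilon)\sum_i \tilde{u}_{j_i}^2$ to the drop $\|\bX\hat{\beta}_{\text{LS}}\|_2^2 - \|\bX\hat{\beta}_{\text{LS}} - \bX\hat{\beta}^{k}\|_2^2$; and the linear-rate bound of part~(iv) of Theorem~\ref{august24-2}.

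For part~(i), first I would let $i^\ast \in \{0, \ldots, k\}$ be an index minimizing $\|\nabla L_n(\hat{\beta}^i)\|_\infty$, equivalently (by \eqref{drew2}) minimizing $|\tilde{u}_{j_i}|$ over that range. The second term of the minimum is then immediate: since $i^\ast$ is a minimizer and $k$ lies in the range, $\|\nabla L_n(\hat{\beta}^{i^\ast})\|_\infty \le \|\nabla L_n(\hat{\beta}^{k})\|_\infty \le \tfrac1n\|\bX\hat{\beta}_{\text{LS}}\|_2\,\gamma^{k/2}$, where the last step is exactly part~(iv) of Theorem~\ref{august24-2}. For the first term, I would apply the identity \eqref{subgrad_sum} with $k$ replaced by $k+1$, giving $\sum_{i=0}^{k}\tilde{u}_{j_i}^2 = \tfrac{1}{\varepsilon(2-\varepsilon)}\bigl(\|\bX\hat{\beta}_{\text{LS}}\|_2^2 - \|\bX\hat{\beta}_{\text{LS}} - \bX\hat{\beta}^{k+1}\|_2^2\bigr)$, and then use that the minimum is at most the average, $|\tilde{u}_{j_{i^\ast}}|^2 \le \tfrac{1}{k+1}\sum_{i=0}^{k}\tilde{u}_{j_i}^2$. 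Taking square roots and dividing by $n$ yields the first term of the bound. Since the \emph{same} index $i^\ast$ satisfies both estimates, it satisfies their minimum.

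For part~(ii), I would write $\hat{\beta}^k = \sum_{i=0}^{k-1}\varepsilon\,\tilde{u}_{j_i}\,e_{j_i}$ and group contributions by coordinate, so that the $\ell$-th coordinate equals $\varepsilon\sum_{i:\,j_i=\ell}\tilde{u}_{j_i}$. Applying Cauchy--Schwarz within each group gives $\bigl(\sum_{i:\,j_i=\ell}\tilde{u}_{j_i}\bigr)^2 \le J_\ell\sum_{i:\,j_i=\ell}\tilde{u}_{j_i}^2$; summing over $\ell$ and bounding each $J_\ell$ by $J_{\max}$ produces $\|\hat{\beta}^k\|_2^2 \le J_{\max}\,\varepsilon^2\sum_{i=0}^{k-1}\tilde{u}_{j_i}^2$. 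Substituting the value of $\sum_{i=0}^{k-1}\tilde{u}_{j_i}^2$ from \eqref{subgrad_sum} and taking square roots delivers the claimed shrinkage bound.

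Neither part presents a serious obstacle once Theorem~\ref{august24-2} is in hand; the work is in correctly combining known identities. The one mild subtlety in part~(i) is exhibiting a \emph{single} index that realizes the minimum of the two bounds, which the plan handles by choosing $i^\ast$ to be the gradient-norm minimizer so that it automatically inherits the $\gamma^{k/2}$ estimate valid at index $k$. For part~(ii), the point requiring care is the coordinate-wise grouping that allows $J_{\max}$ to appear in place of $k$, sharpening the naive bound by exploiting that each coordinate is updated at most $J_{\max}$ times rather than treating all $k$ steps as distinct.
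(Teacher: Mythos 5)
Your proposal is correct and follows essentially the same route as the paper's own proof: for part~(i) it selects the index minimizing $|\tilde{u}_{j_i}|$, bounds it by the average via \eqref{subgrad_sum} (shifted to $k+1$) for the first term and by the linear-rate estimate at index $k$ for the second, and for part~(ii) it performs the same coordinate-wise grouping with Cauchy--Schwarz followed by substitution of \eqref{subgrad_sum}. No gaps.
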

\begin{proof}
We first prove part {\em (i)}.  The first equality of \eqref{four} is a restatement of \eqref{drew2}.  For each $i \in \{0, \ldots, k\}$, recall that $\tilde{u}_{j_i} = (\hat{r}^i)^T\bX_{j_i}$ and that $|\tilde{u}_{j_i}| = |(\hat{r}^i)^T\bX_{j_i}| = \|\bX^T\hat{r}^i\|_\infty$, from \eqref{drew2}. Therefore:
\begin{equation}\label{desk}
\left(\min_{i \in \{0, \ldots, k\}}|\tilde{u}_{j_i}|\right)^2 = \min_{i \in \{0, \ldots, k\}}\tilde{u}_{j_i}^2 \leq \frac{1}{k+1}\sum_{i = 0}^k \tilde{u}_{j_i}^2 \leq \frac{\|\bX\hat{\beta}_{\text{LS}}\|_2^2 - \|\bX(\hat{\beta}^{k+1} - \hat{\beta}_{\text{LS}})\|_2^2}{\varepsilon(2-\varepsilon)(k+1)} \ ,
\end{equation}
where the final inequality follows from \eqref{subgrad_sum} in the proof of Theorem \ref{august24-2}. Now letting $i$ be an index achieving the minimum in the left hand side of the above and taking square roots implies that
\begin{equation*}
\|\bX^T\hat{r}^i\|_\infty = |\tilde{u}_{j_i}| \leq \frac{\sqrt{\|\bX\hat{\beta}_{\text{LS}}\|^2_2 - \|\bX\hat{\beta}_{\text{LS}}-\bX\hat{\beta}^{k+1}\|^2_2}}{\sqrt{\varepsilon(2 - \varepsilon)(k+1)}} \ ,
\end{equation*} which is equivalent to the inequality in \eqref{four} for the first right-most term therein.
Directly applying \eqref{step_size_bound} from the proof of Theorem \ref{august24-2} and using the fact that $i$ is an index achieving the minimum in the left hand side of \eqref{desk} yields:
\begin{equation*}
\|\bX^T\hat{r}^i\|_\infty = |\tilde{u}_{j_i}| \leq |\tilde{u}_{j_k}| \leq \|\bX\hat{\beta}_{\text{LS}}\|_2\left(1-\frac{\varepsilon(2-\varepsilon)\lambda_{\pmin}(\bX^T\bX)}{4p} \right)^{k/2} \ ,
\end{equation*} which is equivalent to the inequality in \eqref{four} for the second right-most term therein.

We now prove part {\em (ii)}.  For fixed $k > 0$, let ${\cal J}(\ell)$ denote the set of iteration counters where \ebs~modifies coordinate $\ell$ of $\beta$, namely $${\cal J}(\ell) := \{i: i<k \ \mbox{and} \ j_i = \ell \ \mbox{in~Step (2.)~of~Algorithm~\mbox{\ebs}} \} \ , $$ for $\ell=1, \ldots, p$.  Then $J_\ell = |{\cal J}(\ell)|$, and the sets ${\cal J}(1), \ldots, {\cal J}(p)$ partition the iteration index set $\{0, 1, \ldots, k-1\}$.  We have:
\begin{equation}\label{whynot}\begin{array}{rcl}
\|\hat{\beta}^k\|_2 &\le& \|(\sum_{i \in {\cal J}(1)} \varepsilon \tilde{u}_{j_i}, \ldots, \sum_{i \in {\cal J}(p)} \varepsilon \tilde{u}_{j_i}) \|_2\\ \\
&\le& \left\|\left(\sqrt{J(1)}\sqrt{\sum_{i \in {\cal J}(1)} \varepsilon^2 \tilde{u}_{j_i}^2}, \ldots, \sqrt{J(p)}\sqrt{\sum_{i \in {\cal J}(p)} \varepsilon^2 \tilde{u}_{j_i}^2}\right) \right\|_2\\ \\
&\le& \varepsilon\sqrt{J_{\max}}\left\|\left(\sqrt{\sum_{i \in {\cal J}(1)} \tilde{u}_{j_i}^2}, \ldots, \sqrt{\sum_{i \in {\cal J}(p)} \tilde{u}_{j_i}^2}\right) \right\|_2\\ \\
&=& \varepsilon \sqrt{J_{\max}}\sqrt{\left(\tilde{u}_{j_0}^2 + \ldots + \tilde{u}_{j_{k-1}}^2\right)} \ ,
\end{array}\end{equation}and the proof is completed by applying inequality \eqref{subgrad_sum}.
\end{proof}

Part {\em (i)} of Proposition~\ref{prop-ebs-grad-shr-1} describes the behavior of the gradient of the least squares loss function --- indeed, recall that the dynamics of the gradient are closely linked to that
of the \ebs algorithm and, in particular, to the evolution of the loss function values. To illustrate this connection, let us recall two simple characteristics of
the \ebs algorithm:
\begin{equation}\nonumber
\begin{myarray}[1.0]{ccc}
L_n(\hat{\beta}^{k}) -  L_n(\hat{\beta}^{k+1}) &=&  \tfrac{n}{2}\varepsilon(2-\varepsilon) \|\nabla L_n(\hat{\beta}^k)\|_\infty^2 \\ \\
  \hat{r}^{k+1} -  \hat{r}^{k}  &=& - \varepsilon \left( (\hat{r}^k)^T\bX_{j_k}\right) \M{X}_{j_k} \ ,
\end{myarray}
\end{equation}
which follow from~\eqref{drew3} and Step (3.) of the $\FSe$ algorithm respectively.
The above updates of the \ebs algorithm clearly show that smaller values of the $\ell_{\infty}$ norm of the gradient
slows down the ``progress'' of the residuals and thus the overall algorithm. Larger values of the norm of the gradient, on the other hand,
lead to rapid ``progress''  in the algorithm. Here, we use the term ``progress'' to measure the amount of decrease in training error and the norm of the changes in successive residuals.
Informally speaking, the \ebs algorithm operationally works towards minimizing the unregularized least squares loss function --- and the gradient of the least squares loss function is simultaneously shrunk towards
zero.  Equation~\eqref{four} precisely quantifies the rate at which the $\ell_{\infty}$ norm of the gradient converges to zero.
Observe that the bound is a minimum of two distinct rates: one which decays as $O(\tfrac{1}{\sqrt{k}})$ and another which is linear with parameter $\sqrt{\gamma}$. This is similar to item {\em (v)} of Theorem~\ref{august24-2}. For small values of $k$ the first rate will dominate, until a point is reached where the linear rate begins to dominate. Note that the dependence on the linear rate $\gamma$ suggests that for large values of correlations among the samples, the gradient decays slower than for smaller pairwise correlations among the samples.

The behavior of the \ebs algorithm described above should be contrasted with the $\FSe$ algorithm. In view of Step (3.) of the $\FSe$ algorithm, the successive differences
of the residuals in $\FSe$ are indifferent to the magnitude of the gradient of the least squares loss function --- as long as the gradient is non-zero, then for $\FSe$ it holds that
$ \| \hat{r}^{k+1} - \hat{r}^k \|_{2}  = \varepsilon$.  Thus $\FSe$ undergoes a more erratic evolution, unlike \ebs where the convergence of the residuals is much more ``smooth.''

\subsubsection{Concentration Results for $\lambda_{\pmin}(\bX^T\bX)$ in the High-dimensional Case}\label{random_matrix}
\begin{proposition}
Suppose that $p > n$, let $\tilde\bX \in \bbR^{n \times p}$ be a random matrix whose entries are i.i.d. standard normal random variables, and define $\bX := \frac{1}{\sqrt{n}}\tilde\bX$. Then, it holds that:\begin{equation*}
\bbE[\lambda_{\pmin}(\bX^T\bX)] \geq \frac{1}{n}\left(\sqrt{p} - \sqrt{n}\right)^2 \ .
\end{equation*}
Furthermore, for every $t \geq 0$, with probability at least $1 - 2 \exp(-t^2/2)$, it holds that:
\begin{equation*}
\lambda_{\pmin}(\bX^T\bX) \geq \frac{1}{n}\left(\sqrt{p} - \sqrt{n} - t\right)^2 \ .
\end{equation*}
\end{proposition}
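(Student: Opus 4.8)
The plan is to reduce everything to the behavior of the smallest singular value of the Gaussian matrix $\tilde\bX$ and then invoke two standard results from non-asymptotic random matrix theory \cite{vershynin2010introduction}: Gordon's expectation bound and Gaussian concentration of measure. First I would record the deterministic linear-algebra identity underlying the claim. Since $p > n$, the $n \times p$ matrix $\tilde\bX$ has $n$ singular values $s_1 \ge \cdots \ge s_n \ge 0$, and the eigenvalues of $\tilde\bX^T\tilde\bX$ are precisely $s_1^2, \ldots, s_n^2$ together with $p-n$ zeros. Because $\tilde\bX$ has i.i.d.\ continuous (normal) entries it has full row rank $n$ almost surely, so $s_n = \sigma_{\min}(\tilde\bX) > 0$ and hence the smallest \emph{nonzero} eigenvalue of $\tilde\bX^T\tilde\bX$ equals $\sigma_{\min}(\tilde\bX)^2$. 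Since $\bX^T\bX = \tfrac1n \tilde\bX^T\tilde\bX$, this gives the key reduction
$$\lambda_{\pmin}(\bX^T\bX) = \tfrac1n\, \sigma_{\min}(\tilde\bX)^2 \ , $$
so it suffices to prove the corresponding statements for $\sigma_{\min}(\tilde\bX)$.

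For the expectation bound, I would apply Gordon's theorem to the tall $p \times n$ matrix $\tilde\bX^T$ (tall because $p > n$), which has i.i.d.\ standard normal entries; this yields $\bbE[\sigma_{\min}(\tilde\bX^T)] \ge \sqrt{p} - \sqrt{n}$, and since $\tilde\bX$ and $\tilde\bX^T$ share singular values, $\bbE[\sigma_{\min}(\tilde\bX)] \ge \sqrt{p}-\sqrt{n}$. Combining with the reduction and Jensen's inequality (equivalently $\bbE[Z^2] \ge (\bbE Z)^2$ applied to $Z = \sigma_{\min}(\tilde\bX)$) gives
$$\bbE[\lambda_{\pmin}(\bX^T\bX)] = \tfrac1n\,\bbE[\sigma_{\min}(\tilde\bX)^2] \ge \tfrac1n\,(\bbE[\sigma_{\min}(\tilde\bX)])^2 \ge \tfrac1n(\sqrt{p}-\sqrt{n})^2 \ , $$
which is the first assertion.

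For the tail bound I would use that the map $M \mapsto \sigma_{\min}(M)$ is $1$-Lipschitz with respect to the Frobenius norm (a consequence of Weyl's inequality, $|\sigma_{\min}(M)-\sigma_{\min}(M')| \le \|M-M'\|_{\mathrm{op}} \le \|M-M'\|_F$). Viewing the entries of $\tilde\bX$ as a standard Gaussian vector, the Gaussian concentration inequality for Lipschitz functions gives, for every $t \ge 0$,
$$\bbP\left(\sigma_{\min}(\tilde\bX) \le \bbE[\sigma_{\min}(\tilde\bX)] - t\right) \le \exp(-t^2/2) \le 2\exp(-t^2/2) \ . $$
Since $\bbE[\sigma_{\min}(\tilde\bX)] \ge \sqrt{p}-\sqrt{n}$, on the complementary event (of probability at least $1 - 2\exp(-t^2/2)$) we have $\sigma_{\min}(\tilde\bX) \ge \sqrt{p}-\sqrt{n}-t$; squaring this (valid on the range $t \le \sqrt p - \sqrt n$ where the right-hand side is nonnegative, and trivial otherwise since $\sigma_{\min} \ge 0$) and dividing by $n$ yields $\lambda_{\pmin}(\bX^T\bX) \ge \tfrac1n(\sqrt p - \sqrt n - t)^2$, as claimed.

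The proof is essentially a matter of correctly assembling known facts, so there is no single hard obstacle; the two points requiring care are \emph{(a)} the identification of $\lambda_{\pmin}(\tilde\bX^T\tilde\bX)$ with $\sigma_{\min}(\tilde\bX)^2$, which hinges on the almost-sure full-rank property of $\tilde\bX$, and \emph{(b)} orienting the matrix correctly (passing to $\tilde\bX^T$) so that Gordon's theorem applies to a tall matrix and produces the bound $\sqrt p - \sqrt n$ rather than a negative or ill-defined quantity.
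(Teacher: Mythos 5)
Your proposal is correct and follows essentially the same route as the paper: both reduce the claim to the smallest singular value of the tall Gaussian matrix $\tilde\bX^T$, apply Gordon's bound $\bbE[\sigma_{\min}(\tilde\bX^T)] \ge \sqrt{p}-\sqrt{n}$ together with Jensen's inequality for the expectation statement, and use Gaussian concentration (the paper cites the packaged Corollary 5.35 of \cite{vershynin2010introduction}, which you instead unpack via the Lipschitz property of $\sigma_{\min}$) for the tail bound. Your explicit justification of the identity $\lambda_{\pmin}(\bX^T\bX) = \tfrac{1}{n}\sigma_{\min}(\tilde\bX)^2$ via the almost-sure full row rank of $\tilde\bX$ is a small but welcome addition that the paper leaves implicit.
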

\begin{proof}
Let $\sigma_1(\tilde\bX^T) \geq \sigma_2(\tilde\bX^T) \geq \ldots \geq \sigma_n(\tilde\bX^T)$ denote the ordered singular values of $\tilde\bX^T$ (equivalently of $\tilde\bX$). Then, Theorem 5.32 of \cite{vershynin2010introduction} states that:
\begin{equation*}
\bbE[\sigma_n(\tilde\bX^T)] \geq \sqrt{p} - \sqrt{n} \ ,
\end{equation*}
which thus implies:
\begin{equation*}
\bbE[\lambda_{\pmin}(\bX^T\bX)] = \bbE[(\sigma_n(\bX^T))^2] \geq (\bbE[\sigma_n(\bX^T)])^2 = \frac{1}{n}(\bbE[\sigma_n(\tilde\bX^T)])^2 \geq \frac{1}{n}\left(\sqrt{p} - \sqrt{n}\right)^2 \ ,
\end{equation*}
where the first inequality is Jensen's inequality.

Corollary 5.35 of \cite{vershynin2010introduction} states that, for every $t \geq 0$, with probability at least $1 - 2 \exp(-t^2/2)$ it holds that:
\begin{equation*}
\sigma_n(\tilde\bX^T) \geq \sqrt{p} - \sqrt{n} - t \ ,
\end{equation*}
which implies that:
\begin{equation*}
\lambda_{\pmin}(\bX^T\bX) = (\sigma_n(\bX^T))^2 = \frac{1}{n}(\sigma_n(\tilde\bX^T))^2 \geq \frac{1}{n}\left(\sqrt{p} - \sqrt{n} - t\right)^2 \ .
\end{equation*}
\end{proof}

Note that, in practice, we standardize the model matrix $\bX$ so that its columns have unit $\ell_2$ norm. Supposing that the entries of $\bX$ did originate from an i.i.d. standard normal matrix $\tilde\bX$, standardizing the columns of $\tilde\bX$ is not equivalent to setting $\bX := \frac{1}{\sqrt{n}}\tilde\bX$. But, for large enough $n$, standardizing is a valid approximation to normalizing by $\frac{1}{\sqrt{n}}$, i.e., $\bX \approx \frac{1}{\sqrt{n}}\tilde\bX$, and we may thus apply the above results.

\subsection{Additional Details for Section~3}

\subsubsection{An Elementary Sequence Process Result, and a Proof of Proposition \ref{subgrad}}\label{simple}
Consider the following elementary sequence process:  $x^0 \in \mathbb{R}^n$ is given, and $x^{i+1} \gets x^i - \alpha_i g^i$ for all $i \ge 0$, where $g^i \in \mathbb{R}^n$ and $\alpha_i $ is a nonnegative scalar, for all $i$.  For this process there are {\em no} assumptions on how the vectors $g^i$ might be generated.\medskip

\begin{proposition}\label{simpleprop}
For the elementary sequence process described above, suppose that the $\{g^i\}$ are uniformly bounded, namely $\|g^i\|_2 \leq G$ for all $i \geq 0$. Then for all $k \geq 0$ and for any $x \in \mathrm{R}^n$ it holds that:
\begin{equation}\label{toomuchsnow}
\frac{1}{\sum_{i=0}^k\alpha_i}\sum_{i = 0}^k\alpha_i(g^i)^T(x^i - x) \ \ \leq \ \  \frac{\|x^0-x\|_2^2+ G^2\sum_{i = 0}^k\alpha_i^2}{2\sum_{i=0}^k\alpha_i} \ .
\end{equation}
Indeed, in the case when $\alpha_i = \varepsilon$ for all $i$, it holds that:
\begin{equation}\label{waytoomuchsnow}
\frac{1}{k+1}\sum_{i = 0}^k(g^i)^T(x^i - x) \ \ \leq \ \  \frac{\|x^0-x\|_2^2}{2(k+1)\varepsilon}+ \frac{G^2\varepsilon}{2} \ .
\end{equation}

\end{proposition}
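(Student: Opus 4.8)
The plan is to track the evolution of the squared Euclidean distance from the iterates $x^i$ to the arbitrary fixed reference point $x$, exploiting the fact that each update moves along $-\alpha_i g^i$. Concretely, I would start from the update rule $x^{i+1} = x^i - \alpha_i g^i$ and expand the square:
$$\|x^{i+1} - x\|_2^2 = \|x^i - x\|_2^2 - 2\alpha_i (g^i)^T(x^i - x) + \alpha_i^2 \|g^i\|_2^2 \ .$$
The point of this identity is that the quantity $\alpha_i (g^i)^T(x^i - x)$ appearing in the statement is, up to a factor of $2$, exactly the per-step decrease in $\|x^i - x\|_2^2$ plus a quadratic error term. Rearranging gives
$$2\alpha_i (g^i)^T(x^i - x) = \|x^i - x\|_2^2 - \|x^{i+1} - x\|_2^2 + \alpha_i^2\|g^i\|_2^2 \ .$$

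Next I would sum this identity over $i = 0, \ldots, k$. The first two terms on the right telescope, leaving only $\|x^0 - x\|_2^2 - \|x^{k+1} - x\|_2^2$, while the last term accumulates to $\sum_{i=0}^k \alpha_i^2 \|g^i\|_2^2$. The two simplifying steps are then: discard the nonnegative term $\|x^{k+1} - x\|_2^2$ (this is where we ``lose'' information, but it is all we need), and bound each $\|g^i\|_2^2 \leq G^2$ using the uniform bound. This yields
$$2\sum_{i=0}^k \alpha_i (g^i)^T(x^i - x) \leq \|x^0 - x\|_2^2 + G^2 \sum_{i=0}^k \alpha_i^2 \ ,$$
and dividing through by $2\sum_{i=0}^k \alpha_i$ (which is positive as long as not all the $\alpha_i$ vanish) gives inequality \eqref{toomuchsnow}. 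For the constant step-size case \eqref{waytoomuchsnow}, I would simply specialize $\alpha_i = \varepsilon$, so that $\sum_{i=0}^k \alpha_i = (k+1)\varepsilon$ and $\sum_{i=0}^k \alpha_i^2 = (k+1)\varepsilon^2$; substituting these into \eqref{toomuchsnow} and cancelling the common factor of $\varepsilon$ in the left-hand sum produces the stated bound directly.

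The whole argument is elementary and there is no substantive obstacle; the only thing to be careful about is the sign bookkeeping — using $\alpha_i \geq 0$ to preserve the direction of the inequality when replacing $\|g^i\|_2^2$ by $G^2$, and observing that the reference point $x$ is completely arbitrary (no optimality or feasibility of $x$ is invoked, and the $g^i$ need not be subgradients of anything). This is precisely what makes the result a purely algebraic ``sequence process'' lemma, cleanly separated from the convex-analytic content. That content enters only afterward, when one specializes $x = x^*$ and uses the subgradient inequality \eqref{jeff} to convert $(g^i)^T(x^i - x^*)$ into a lower bound on $f(x^i) - f^*$, thereby turning \eqref{waytoomuchsnow} into Proposition \ref{subgrad}.
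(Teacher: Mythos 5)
Your proposal is correct and follows essentially the same argument as the paper: expand $\|x^{i+1}-x\|_2^2$ using the update rule, telescope the sum over $i=0,\ldots,k$, drop the nonnegative term $\|x^{k+1}-x\|_2^2$, bound $\|g^i\|_2^2$ by $G^2$, and divide by $2\sum_{i=0}^k \alpha_i$. The specialization to constant step-size is likewise the same direct substitution.
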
\medskip
\begin{proof} Elementary arithmetic yields the following:
$$\begin{array}{rcl}
\|x^{i+1} - x\|_2^2 &=& \|x^{i} -\alpha_i g^i- x\|_2^2\\ \\
&= & \|x^{i} - x \|_2^2 +\alpha_i^2 \|g^i\|^2_2 + 2 \alpha_i (g^i)^T(x-x^i)\\ \\
&\le & \|x^{i} - x \|_2^2 +G^2\alpha_i^2 + 2 \alpha_i (g^i)^T(x-x^i) \ . \\ \\
\end{array}$$ Rearranging and summing these inequalities for $i=0, \ldots, k$ then yields:
$$2 \sum_{i=0}^k \alpha_i (g^i)^T(x^i -x ) \le G^2 \sum_{i=0}^k \alpha_i^2 +\|x^0 - x\|_2^2 - \|x^{k+1}-x\|_2^2 \le G^2 \sum_{i=0}^k \alpha_i^2 +\|x^0 - x\|_2^2 \ , $$ which then rearranges to yield \eqref{toomuchsnow}.  \eqref{waytoomuchsnow} follows from \eqref{toomuchsnow} by direct substitution.\end{proof}\medskip

\noindent {\em Proof of Proposition \ref{subgrad}:}
Consider the subgradient descent method \eqref{subgrad_update2} with arbitrary step-sizes $\alpha_i$ for all $i$.  We will prove the following inequality:
\begin{equation}\label{sd_bound11}
\min_{i \in \{0,\ldots,k\}} f(\x^i)  \ \ \leq \ \ f^* \ + \ \frac{\|\x^0-\x^*\|_2^2+ G^2\sum_{i = 0}^k\alpha_i^2}{2\sum_{i=0}^k\alpha_i} \ ,
\end{equation} from which the proof of Proposition \ref{subgrad} follows by substituting $\alpha_i = \alpha$ for all $i$ and simplifying terms.  Let us now prove \eqref{sd_bound11}.  The subgradient descent method \eqref{subgrad_update2} is applied to instances of problem \eqref{poi1} where $f(\cdot)$ is convex, and where $g^i$ is subgradient of $f(\cdot)$ at $x^i$, for all $i$.  If $x^*$ is an optimal solution of \eqref{poi1}, it therefore holds from the subgradient inequality that
$$ f^* = f(x^*) \ge f(x^i) + (g^i)^T(x-x^i) \ . $$ Substituting this inequality in \eqref{toomuchsnow} for the value of $x = x^*$ yields:
\begin{align*}
\frac{\|x^0-x^*\|_2^2+ G^2\sum_{i = 0}^k\alpha_i^2}{2\sum_{i=0}^k\alpha_i}  &\ge \frac{1}{\sum_{i=0}^k\alpha_i}\sum_{i = 0}^k\alpha_i(g^i)^T(x^i - x^*) \\
&\ge \frac{1}{\sum_{i=0}^k\alpha_i}\sum_{i = 0}^k\alpha_i(f(x^i)- f^*) \ge \min_{i \in \{0, \ldots, k\}} f(x^i) \ .
\end{align*}
\qed\medskip

\subsubsection{Proof of Theorem \ref{December17}}\label{migraine}
We first prove part {\em (i)}.  Note that item {\em (i)} of Proposition~\ref{FSequiv} shows that $\FSe$ is a specific instance of subgradient descent to solve problem~\eqref{FS-problem}, using the constant step-size $\varepsilon$. Therefore
we can apply the computational guarantees associated with the subgradient descent method, particularly Proposition \ref{subgrad}, to the $\FSe$ algorithm.   Examining Proposition \ref{subgrad}, we need to work out the corresponding values of $f^*$, $\|x^0 - x^*\|_2$, $\alpha$, and $G$ in the context of $\FSe$ for solving the CM problem \eqref{FS-problem}.
Note that $f^\ast = 0$ for problem \eqref{FS-problem}.   We bound the distance from the initial residuals to the optimal least-squares residuals as follows:
$$\|\hat r^0 - r^*\|_2 = \|\hat r^0 - \hat r_{LS}\|_2 = \|\by - (\by - \bX\hat{\beta}_{\text{LS}})\|_2 = \|\bX\hat{\beta}_{\text{LS}}\|_2 \ . $$
From Proposition \ref{FSequiv} part {\em (i)} we have $\alpha = \varepsilon$.  Last of all, we need to determine an upper bound $G$ on the norms of subgradients.  We have:
$$\|g^k\|_2 = \|\sgn((\hat{r}^k)^T\bX_{j_k})\bX_{j_k}\|_2 = \|\bX_{j_k}\|_2 = 1 \ ,
$$ since the covariates have been standardized, so we can set $G=1$.  Now suppose algorithm $\FSe$ is run for $k$ iterations.  Proposition \ref{subgrad} then implies that:
\begin{equation}\label{fs_cor_bound}
\min\limits_{i \in \{0, \ldots, k \}}\|\bX^T\hat r^i\|_\infty = \min\limits_{i \in \{0, \ldots, k \}}f(\hat r^i) \leq f^* + \frac{\|\hat r^0 - r^*\|_2^2}{2\alpha(k+1)}+ \frac{\alpha G^2}{2} =  \frac{\|\bX\hat{\beta}_{\text{LS}}\|_2^2}{2\varepsilon(k+1)}+ \frac{\varepsilon}{2} \ .
\end{equation}
The above inequality provides a bound on the best (among the first $k$ residual iterates) empirical correlation between between the residuals $\hat r^i$ and each predictor variable, where the bound depends explicitly on the learning rate $\varepsilon$ and the number of iterations $k$.  Furthermore, invoking \eqref{grad_norm}, the above inequality implies the following upper bound on the norm of the gradient of the least squares loss $L_n(\cdot)$ for the model iterates $\{\hat{\beta}^i\}$ generated by $\FSe$:
\begin{equation}\label{fs_grad_bound}
\min\limits_{i \in \{0, \ldots, k \}}\|\nabla L_n(\hat{\beta}^i)\|_\infty \leq  \frac{\|\bX\hat{\beta}_{\text{LS}}\|_2^2}{2n\varepsilon(k+1)}+ \frac{\varepsilon}{2n} \ .
\end{equation}
Let $i$ be the index where the minimum is attained on the left side of the above inequality.  In a similar vein as in the analysis in Section \ref{LSBsection}, we now use Proposition \ref{qp} which presents two important properties of convex quadratic functions.  Because $L_n(\cdot)$ is a convex quadratic function of the same format as Proposition \ref{qp} with $h(\cdot) \gets L_n(\cdot)$, $Q \gets \tfrac{1}{n}\bX^T\bX$, and $h^* \gets L_n^*$, it follows from the second property of Proposition \ref{qp} that
$$\|\nabla L_n(\hat{\beta}^i)\|_2 \ge \sqrt{\frac{\lambda_\pmin (\tfrac{1}{n}\bX^T\bX) (L_n(\hat{\beta}^i) -L_n^*)}{2}} = \sqrt{\frac{\lambda_\pmin (\bX^T\bX) (L_n(\hat{\beta}^i) -L_n^*)}{2n}} \ ,  $$ where recall that $\lambda_{\pmin}(\bX^T\bX)$ denotes the smallest non-zero (hence positive) eigenvalue of $\bX^T\bX$.  Therefore
$$\|\nabla L_n(\hat{\beta}^i)\|_\infty^2 \ge \tfrac{1}{p}\|\nabla L_n(\hat{\beta}^i)\|_2^2 \ge \frac{\lambda_\pmin (\bX^T\bX) (L_n(\hat{\beta}^i) -L_n^*)}{2np} \ . $$
Substituting this inequality into \eqref{fs_grad_bound} for the index $i$ where the minimum is attained yields after rearranging:
\begin{equation}\label{bluray}
L_n(\hat{\beta}^{i}) - L_n^* \le  \frac{p}{2n\lambda_\pmin (\bX^T\bX)}\left[ \frac{\|\bX\hat{\beta}_{\text{LS}}\|_2^2}{\varepsilon(k+1)}+ \varepsilon \right]^2 \ , \end{equation}
which proves part {\em (i)}.  The proof of part {\em (ii)} follows by noting from the first inequality of Proposition \ref{qp} that there exists a least-squares solution $\hat{\beta}^*$ for which:
$$\|\hat{\beta}^* - \hat{\beta}^i\|_2 \le \sqrt{\frac{2(L_n(\hat{\beta}^i) - L_n^*)}{\lambda_{\pmin}\left(\tfrac{1}{n}\bX^T\bX\right)}}=\sqrt{\frac{2n(L_n(\hat{\beta}^i) - L_n^*)}{\lambda_{\pmin}(\bX^T\bX)}}\le \frac{\sqrt{p}}{\lambda_{\pmin}(\bX^T\bX)}\left[\frac{\|\bX\hat{\beta}_{\text{LS}}\|_2^2}{\varepsilon(k+1)}+ \varepsilon \right] \ ,$$ where the second inequality in the above chain follows using \eqref{bluray}.  The proof of part {\em (iii)} follows by first observing that
$\|\bX(\hat{\beta}^i- \hat{\beta}_{\text{LS}} )\|_2 = \sqrt{2n(L_n(\hat{\beta}^i) - L_n^*)}$ and then substituting the bound on $(L_n(\hat{\beta}^i) - L_n^*)$ from part {\em (i)} and simplifying terms.  Part {\em (iv)} is a restatement of inequality \eqref{fs_cor_bound}.  Finally, parts {\em (v)} and {\em (vi)} are simple and well-known structural properties of $\FSe$ that are re-stated here for completeness. \qed\medskip

\subsubsection{A deeper investigation of the computational guarantees for \ebs and $\FSe$}\label{retread}

Here we show that in theory, \ebs is much more efficient than $\FSe$ if the primary goal is to obtain a model with a certain (pre-specified) data-fidelity. To formalize this notion,
we consider a parameter
$\tau \in (0,1]$.  We say that $\bar\beta$ is at a $\tau$-relative distance to the least squares predictions if $\bar\beta$ satisfies:
\begin{equation}\label{predict_gamma}
\|\bX\bar\beta - \bX \hat{\beta}_{\text{LS}}\|_2 \leq  \tau\| \bX \hat{\beta}_{\text{LS}}\|_2 \ .
\end{equation}

Now let us pose the following question:  if both \ebs and $\FSe$ are allowed to run with an appropriately chosen learning rate $\varepsilon$ for each algorithm,
which algorithm will satisfy~\eqref{predict_gamma} in fewer iterations? We will answer this question by studying closely the computational guarantees of Theorems~\ref{august24-2} and~\ref{December17}.
Since our primary goal is to compute $\bar\beta$ satisfying \eqref{predict_gamma}, we may optimize the learning rate $\varepsilon$, for each algorithm, to achieve this goal with the smallest
number of boosting iterations.

Let us first study \ebs.  As we have seen, a learning rate of $\varepsilon = 1$ achieves the fastest rate of linear convergence for \ebs and is thus optimal with regard to the bound in part {\em (iii)} of
Theorem \ref{august24-2}. If we run \ebs with $\varepsilon = 1$ for
$k^{\text{\ebs}} := \left\lceil\frac{4p}{\lambda_{\pmin}(\bX^T\bX)}\ln\left(\frac{1}{\tau^2}\right)\right\rceil$ iterations,
then it follows from part {\em (iii)} of Theorem \ref{august24-2} that we achieve \eqref{predict_gamma}.  Furthermore, it follows from \eqref{2miles} that the resulting $\ell_{1}$-shrinkage bound will satisfy:

$$\textsc{Sbound}^{\text{\ebs}} \le \|\bX\hat\beta_{\text{LS}}\|_2 \sqrt{k^{\text{\ebs}}}  \ . $$

For $\FSe$, if one works out the arithmetic, the optimal number of boosting iterations to achieve \eqref{predict_gamma} is given by:
$k^{\FSe} := \left\lceil\frac{4p}{\lambda_{\pmin}(\bX^T\bX)}\left(\frac{1}{\tau^2}\right)\right\rceil - 1$ using the learning rate $\varepsilon = \frac{\|\bX\hat{\beta}_{\text{LS}}\|_2}{\sqrt{k^{\FSe}+1}}$.  Also, it follows from part {\em (v)} of Theorem \ref{December17} that the resulting shrinkage bound will satisfy:

$$\textsc{Sbound}^{\FSe} \le \varepsilon \cdot  k^{\FSe}   \approx \|\bX\hat\beta_{\text{LS}}\|_2 \cdot  \sqrt{k^{\FSe}} \ . $$

Observe that $k^{\text{\ebs}} < k^{\FSe}$, whereby \ebs is able to achieve \eqref{predict_gamma} in fewer iterations than $\FSe$.  Indeed, if we let $\eta$ denote the ratio $k^{\text{\ebs}} / k^{\FSe}$, then it holds that

\begin{equation}\label{sunny} \eta := \frac{k^{\text{\ebs}}}{k^{\FSe}} \approx \frac{\ln\left(\frac{1}{\tau^2}\right)}{\frac{1}{\tau^2}} \le \frac{1}{e} \ < \ 0.368 \ . \end{equation}

The left panel of Figure \ref{fig:eta_vs_tau} shows the value of $\eta$ as a function of $\tau$.  For small values of the tolerance parameter $\tau$ we see that $\eta$ is itself close to zero, which means that \ebs will need significantly fewer iterations than $\FSe$ to achieve the condition~\eqref{predict_gamma}.

We can also examine the $\ell_{1}$-shrinkage bounds similarly.  If we let $\vartheta$ denote the ratio of

$\textsc{Sbound}^{\text{\ebs}}$ to $\textsc{Sbound}^{\FSe}$, then it holds that

\begin{equation}\label{sunny2} \vartheta := \frac{\textsc{Sbound}^{\text{\ebs}}}{\textsc{Sbound}^{\FSe}} = \sqrt{\frac{k^{\text{\ebs}}}{k^{\FSe}}} = \frac{\sqrt{\ln\left(\frac{1}{\tau^2}\right)}}{\frac{1}{\tau}} \le \frac{1}{\sqrt{e}} \ < \ 0.607 \ . \end{equation}

This means that if both bounds are relatively tight, then the $\ell_1$-shrinkage of the final model produced by \ebs is smaller than that of the final model produced by $\FSe$, by at least a factor of $0.607$.  The right panel of Figure \ref{fig:eta_vs_tau} shows the value of $\vartheta$ as a function of $\tau$.  For small values of the relative predication error constant $\tau$ we see that $\vartheta$ is itself close to zero.

\begin{figure}[h!]
\begin{center}
\scalebox{.9}[0.8]{\begin{tabular}{ccccc}
\rotatebox{90}{\sf { {~~~~~~~~~~~~~~~~~~~~~$\eta$}}}&\includegraphics[width=0.4\textwidth,height=0.24\textheight,  trim = 1.1cm 1.5cm 0cm 2cm, clip = true ]{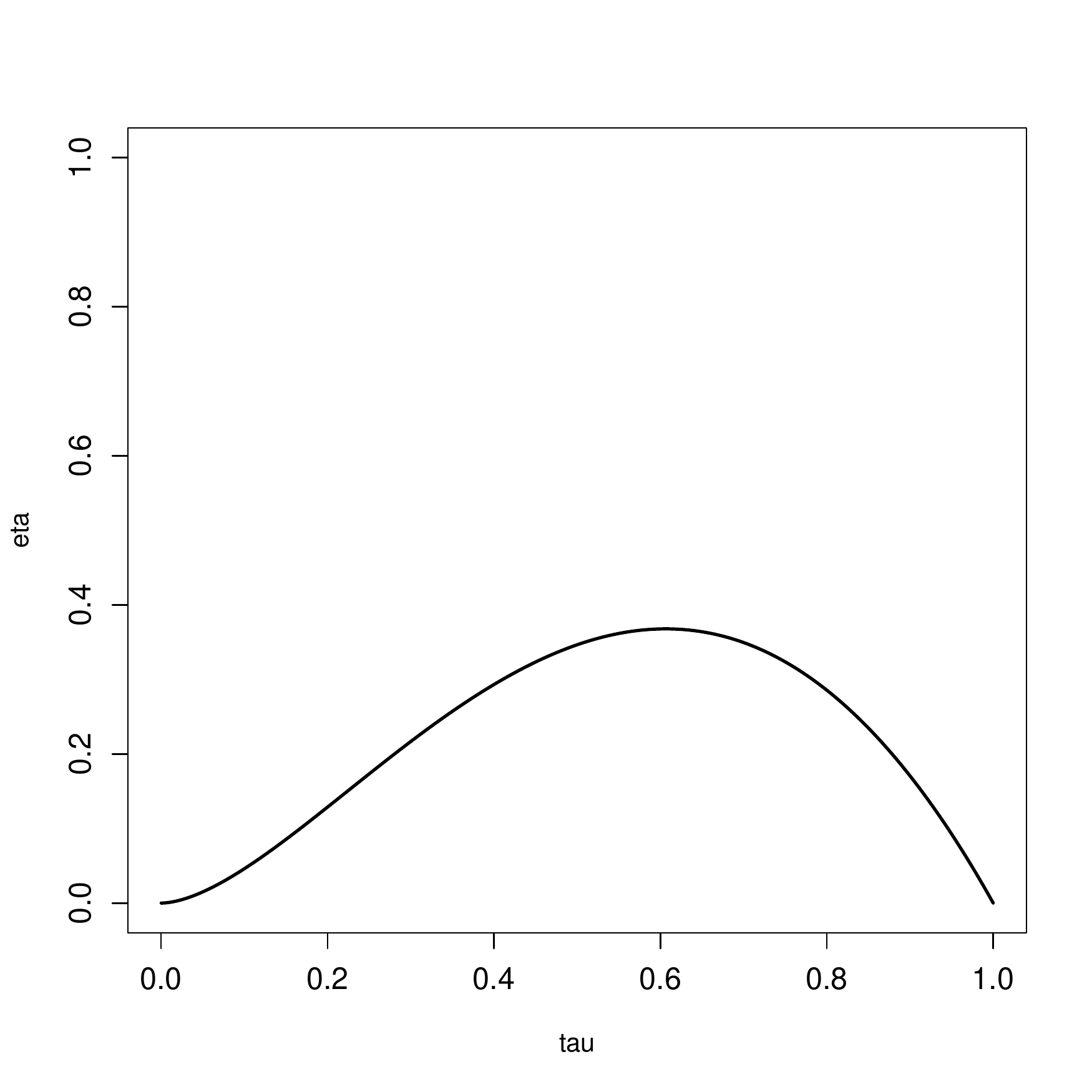}&&
\rotatebox{90}{\sf { {~~~~~~~~~~~~~~~~~~~~~$\vartheta$}}}&\includegraphics[width=0.4\textwidth,height=0.24\textheight,   trim = 1.1cm 1.5cm 0cm 2cm, clip = true ]{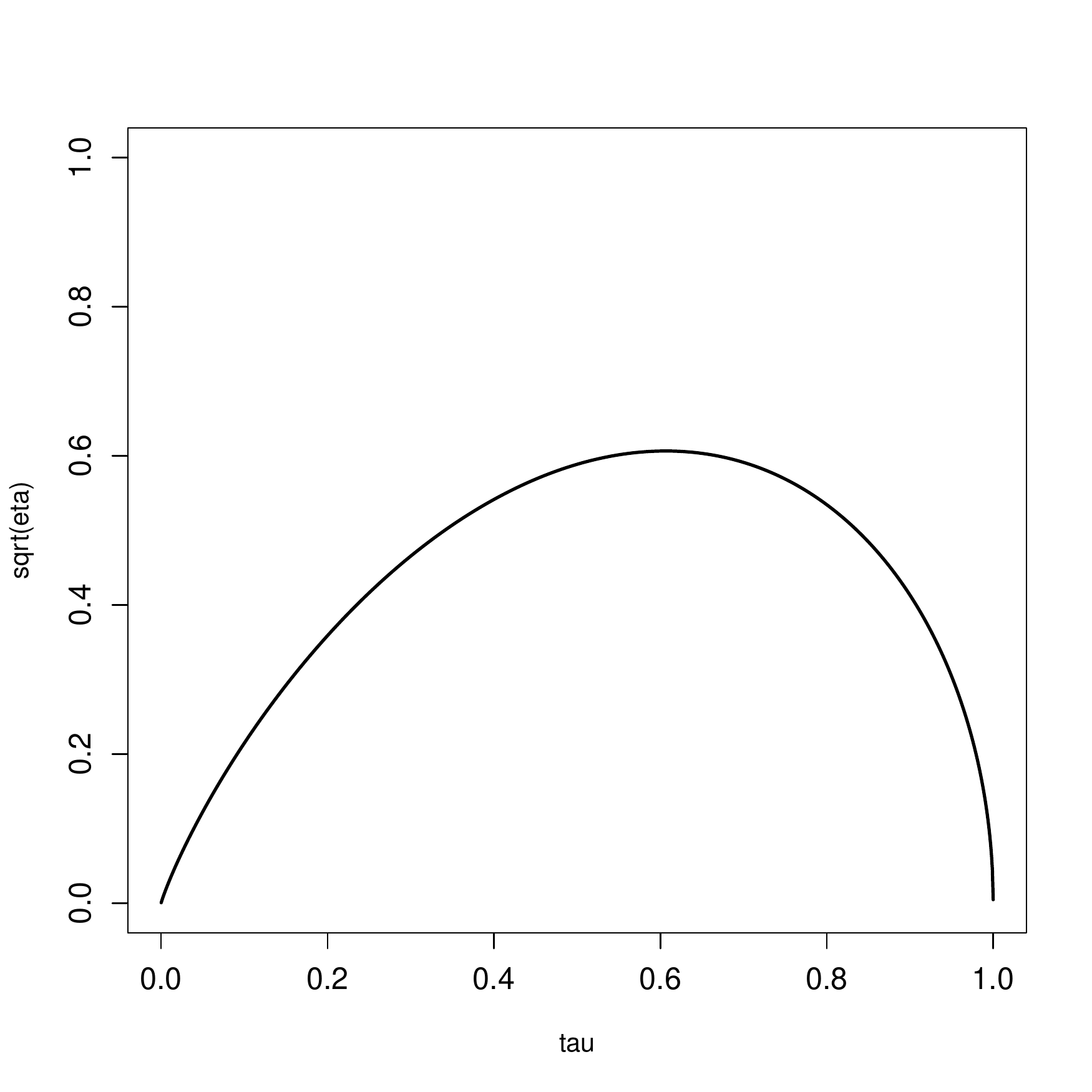}\\
&\sf $\tau$& &&\sf $\tau$ \\
\end{tabular}}
\end{center}
\caption{{\small{Plot showing the value of the ratio $\eta$ of iterations of \ebs to $\FSe$ (equation \eqref{sunny}) versus the target relative prediction error $\tau$ [left panel], and the ratio $\vartheta$ of shrinkage bounds of \ebs to $\FSe$ (equation \eqref{sunny2}) versus the target relative prediction error $\tau$ [right panel]. }} }\label{fig:eta_vs_tau}
\end{figure}

We summarize the above analysis in the following remark.\medskip

\begin{remark}\label{our remark} {\bf (Comparison of efficiency of \ebs and $\FSe$)}  Suppose that the primary goal is to achieve a $\tau$-relative prediction error as defined in \eqref{predict_gamma}, and that \ebs and $\FSe$ are run with appropriately determined learning rates for each algorithm.  Then the ratio of required number of iterations of these methods to achieve \eqref{predict_gamma} satisfies $$\eta := \frac{k^{\text{\ebs}}}{k^{\FSe}} < 0.368 \ . $$Also, the ratio of the shrinkage bounds from running these numbers of iterations satisfies $$\vartheta:= \frac{\textsc{Sbound}^{\text{\ebs}}}{\textsc{Sbound}^{\FSe}} < 0.607 \ , $$where all of the analysis is according to the bounds in Theorems \ref{December17}  and \ref{august24-2}.\end{remark}

We caution the reader that the analysis leading to Remark \ref{our remark} is premised on the singular goal of achieving \eqref{predict_gamma} in as few iterations as possible. As mentioned previously, the models produced in the interior of the boosting profile are more statistically interesting than those produced at the end. Thus for both algorithms it may be beneficial, and may lessen the risk of overfitting, to trace out a smoother profile by selecting the learning rate $\varepsilon$ to be smaller than the prescribed values in this subsection ($\varepsilon = 1$ for \ebs and $\varepsilon = \frac{\|\bX\hat{\beta}_{\text{LS}}\|_2}{\sqrt{k^{\FSe}+1}}$ for $\FSe$). Indeed, considering just \ebs for simplicity, if our goal is to produce a $\tau$-relative prediction error according to \eqref{predict_gamma} with the smallest possible $\ell_1$ shrinkage, then Figure \ref{fig:ls} suggests that this should be accomplished by selecting $\varepsilon$ as small as possible (essentially very slightly larger than 0).

\subsection{Additional Details for Section~4}

\subsubsection{Duality Between Regularized Correlation Minimization and the \lassoperiod}\label{lasso_dual_appendix}
In this section, we precisely state the duality relationship between the RCM problem \eqref{FS-dual-prob} and the \lassoperiod. We first prove the following property of the least squares loss function that will be useful in our analysis.\medskip

\begin{proposition}\label{lasso-rep}
The least squares loss function $L_n(\cdot)$ has the following max representation:
\begin{equation}\label{ls-minmax}
L_n(\beta) = \max\limits_{\tilde r \in {P_{\text{res}}}}\left\{-\tilde r^T(\tfrac{1}{n}\bX)\beta - \tfrac{1}{2n}\|\tilde r - \by\|_2^2 + \tfrac{1}{2n}\|\by\|_2^2\right\} \ ,
\end{equation}where $P_{\mathrm{res}}:= \{r \in \mathbb{R}^n : r=\by-\bX\beta \ \mathrm{for~some~} \beta \in \mathbb{R}^p\}$.
Moreover, the unique optimal solution (as a function of $\beta$) to the subproblem in \eqref{ls-minmax} is $\bar{r} := \by - \bX\beta$.
\end{proposition}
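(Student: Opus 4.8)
The plan is to recognize the inner maximization as an unconstrained strictly concave quadratic program in the variable $\tilde r$, to solve it exactly via first-order conditions, to observe that its optimizer is automatically feasible for $P_{\mathrm{res}}$, and then to substitute back and simplify to recover $L_n(\beta)$. The key structural point is that the constraint $\tilde r \in P_{\mathrm{res}}$ turns out to be inactive, so the constrained and unconstrained problems coincide.

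First I would fix $\beta$ and study the objective $g(\tilde r) := -\tfrac{1}{n}\tilde r^T \bX\beta - \tfrac{1}{2n}\|\tilde r - \by\|_2^2 + \tfrac{1}{2n}\|\by\|_2^2$ as a function of $\tilde r \in \mathbb{R}^n$ alone. Its Hessian is $-\tfrac{1}{n}I$, which is negative definite, so $g$ is strictly concave and possesses a unique unconstrained maximizer characterized by the stationarity condition $\nabla_{\tilde r}\, g(\tilde r) = -\tfrac{1}{n}\bX\beta - \tfrac{1}{n}(\tilde r - \by) = 0$. Solving this gives $\tilde r = \by - \bX\beta =: \bar r$. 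The crucial observation is that $\bar r = \by - \bX\beta$ lies in $P_{\mathrm{res}}$ by the very definition of that set (it is precisely the residual vector associated with this $\beta$). Hence the unconstrained maximizer is feasible, and therefore it is also the unique maximizer of $g$ over $P_{\mathrm{res}}$, which establishes the claimed optimal solution $\bar r$.

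Finally I would substitute $\bar r$ into $g$ and simplify. Using $\bar r - \by = -\bX\beta$ together with the expansion $(\by - \bX\beta)^T\bX\beta = \by^T\bX\beta - \|\bX\beta\|_2^2$, the three terms collapse to $\tfrac{1}{2n}\left(\|\by\|_2^2 - 2\by^T\bX\beta + \|\bX\beta\|_2^2\right) = \tfrac{1}{2n}\|\by - \bX\beta\|_2^2 = L_n(\beta)$, which is exactly the asserted identity. There is no genuine obstacle in this argument; the only step requiring any care is verifying that the unconstrained optimum is feasible, since it is precisely this fact that allows the maximization over $P_{\mathrm{res}}$ to be computed as an unconstrained problem.
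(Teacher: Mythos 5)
Your proof is correct and follows essentially the same route as the paper: solve the inner maximization by setting the gradient in $\tilde r$ to zero, identify the maximizer as $\bar r = \by - \bX\beta$, and substitute back to recover $\tfrac{1}{2n}\|\by-\bX\beta\|_2^2$. Your explicit remark that the unconstrained maximizer automatically lies in $P_{\mathrm{res}}$ (so the constraint is inactive) is a welcome clarification of a step the paper leaves implicit, but it is the same argument.
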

\begin{proof}
For any $\beta \in \mathbb{R}^p$, it is easy to verify through optimality conditions (setting the gradient with respect to $\tilde r$ equal to 0) that $\bar{r}$ solves the subproblem in \eqref{ls-minmax}, i.e., $$\bar{r} = \arg\max\limits_{\tilde r \in {P_{\text{res}}}}\left\{-\tilde r^T(\tfrac{1}{n}\bX)\beta - \tfrac{1}{2n}\|\tilde r - \by\|_2^2 + \tfrac{1}{2n}\|\by\|_2^2\right\} \ . $$ Thus, we have
\begin{align*}
\max\limits_{\tilde r \in {P_{\text{res}}}}\left\{-\tilde r^T(\tfrac{1}{n}\bX)\beta - \tfrac{1}{2n}\|\tilde r - \by\|_2^2 + \tfrac{1}{2n}\|\by\|_2^2\right\} &= \tfrac{1}{n}\left(\tfrac{1}{2}\|\by\|_2^2 - \by^T\bX\beta + \tfrac{1}{2}\|\bX\beta\|_2^2\right)\\
&= \tfrac{1}{2n}\|\by - \bX\beta\|_2^2 \ .
\end{align*}
\end{proof}

The following result demonstrates that RCM \eqref{FS-dual-prob} has a direct interpretation as a (scaled) dual of the \lasso problem \eqref{poi-lasso}. Moreover, in part {\em (iii)} of the below Proposition, we give a bound on the optimality gap for the \lasso problem in terms of a quantity that is closely related to the objective function of RCM.\medskip

\begin{proposition}{\bf (Duality Equivalence of \lasso and $\text{RCM}_\delta$, and Optimality Bounds)}\label{LASSO-dual}
The \lasso problem \eqref{poi-lasso} and the regularized correlation minimization problem $\mathrm{RCM}_\delta$ \eqref{FS-dual-prob} are dual optimization problems modulo the scaling factor $\frac{n}{\delta}$.  In particular:
\begin{itemize}
\item[(i)]  (Weak Duality) If $\beta$ is feasible for the \lasso problem \eqref{poi-lasso}, and if $\tilde r$ is feasible for the regularized correlation minimization problem $\mathrm{RCM}_\delta$ \eqref{FS-dual-prob}, then
$$L_n(\beta) + \tfrac{\delta}{n} f_\delta(\tilde r) \ \ge \ \tfrac{1}{2n}\|\by\|_2^2  \ . $$
\item[(ii)]  (Strong Duality) It holds that: $$L_{n, \delta}^\ast + \tfrac{\delta}{n}f_\delta^* = \tfrac{1}{2n}\|\by\|_2^2 \ . $$ Moreover, for any given parameter value $\delta \ge 0$, there is a unique vector of residuals $\hat r^\ast_\delta $ associated with every \lasso solution $\hat{\beta}^*_\delta$, i.e., $\hat r^*_\delta = \by - \bX\hat{\beta}^*_\delta$, and $\hat r^\ast_\delta$ is the unique optimal solution to the $\text{RCM}_\delta$ problem \eqref{FS-dual-prob}.
\item[(iii)]  (Optimality Condition for \lassoperiod) If $\beta$ is feasible for the \lasso problem \eqref{poi-lasso} and $r=\by - \bX\beta$, then
\begin{equation}\label{wolfe_gap}
\omega_\delta(\beta) := \|\bX^Tr\|_\infty - \frac{r^T\bX\beta}{\delta} \ \ge 0 \ ,
\end{equation} and
$$L_n(\beta) - L_{n, \delta}^\ast ~\leq~ \tfrac{\delta}{n}\cdot\omega_\delta(\beta) \ . $$
Hence, if $\omega_\delta(\beta) = 0$, then $\beta$ is an optimal solution of the \lasso problem \eqref{poi-lasso}.  \qed
\end{itemize}
\end{proposition}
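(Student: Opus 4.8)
The plan is to derive all three parts from the max-representation of $L_n$ established in Proposition~\ref{lasso-rep} (equation~\eqref{ls-minmax}), together with two elementary facts: H\"older's inequality $v^T\beta \le \|v\|_\infty\|\beta\|_1$, and the dual-norm identity $\max_{\|\beta\|_1\le\delta} v^T\beta = \delta\|v\|_\infty$. For part {\em (i)}, I would fix a \lasso-feasible $\beta$ (so $\|\beta\|_1\le\delta$) and an arbitrary $\tilde r\in P_{\mathrm{res}}$, and simply drop the ``$\max$'' in \eqref{ls-minmax} to get the pointwise bound $L_n(\beta) \ge -\tfrac1n\tilde r^T\bX\beta - \tfrac{1}{2n}\|\tilde r - \by\|_2^2 + \tfrac{1}{2n}\|\by\|_2^2$. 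Adding $\tfrac{\delta}{n}\|\bX^T\tilde r\|_\infty$ to both sides, and using $\tfrac{\delta}{n}f_\delta(\tilde r) = \tfrac{\delta}{n}\|\bX^T\tilde r\|_\infty + \tfrac{1}{2n}\|\tilde r-\by\|_2^2$, the claimed weak-duality inequality reduces to $\tfrac{\delta}{n}\|\bX^T\tilde r\|_\infty - \tfrac1n\tilde r^T\bX\beta\ge 0$, which is exactly H\"older applied to $\tilde r^T\bX\beta = (\bX^T\tilde r)^T\beta \le \|\bX^T\tilde r\|_\infty\|\beta\|_1\le \delta\|\bX^T\tilde r\|_\infty$.

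For the matching lower bound in part {\em (ii)}, I would perform a minimax exchange. Writing $\Phi(\beta,\tilde r)$ for the bracketed expression in \eqref{ls-minmax}, Proposition~\ref{lasso-rep} gives $L_{n,\delta}^\ast = \min_{\|\beta\|_1\le\delta}\max_{\tilde r\in P_{\mathrm{res}}}\Phi(\beta,\tilde r)$. Since $\Phi$ is linear (hence convex) in $\beta$, concave and upper semicontinuous in $\tilde r$, and the set $\{\|\beta\|_1\le\delta\}$ is convex and compact, Sion's minimax theorem permits swapping the order to $L_{n,\delta}^\ast = \max_{\tilde r\in P_{\mathrm{res}}}\min_{\|\beta\|_1\le\delta}\Phi(\beta,\tilde r)$. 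The inner minimization touches only the bilinear term and evaluates to $-\tfrac{\delta}{n}\|\bX^T\tilde r\|_\infty$ by the dual-norm identity, so the outer problem becomes $\max_{\tilde r\in P_{\mathrm{res}}}\{\tfrac{1}{2n}\|\by\|_2^2 - \tfrac{\delta}{n}(\|\bX^T\tilde r\|_\infty + \tfrac{1}{2\delta}\|\tilde r-\by\|_2^2)\} = \tfrac{1}{2n}\|\by\|_2^2 - \tfrac{\delta}{n}f_\delta^\ast$, which is precisely strong duality after rearranging. For uniqueness of the optimal residual, I would note that $f_\delta$ is strictly convex on $P_{\mathrm{res}}$ (owing to $\tfrac{1}{2\delta}\|r-\by\|_2^2$) and coercive, so the $\mathrm{RCM}_\delta$ minimizer $\hat r_\delta^\ast$ exists and is unique; a standard saddle-point argument (both $\min\max$ and $\max\min$ are attained and equal) then shows any optimal pair $(\hat\beta_\delta^\ast,\hat r_\delta^\ast)$ is a saddle point, so $\hat r_\delta^\ast$ maximizes $\Phi(\hat\beta_\delta^\ast,\cdot)$, whose unique maximizer is $\by-\bX\hat\beta_\delta^\ast$ by Proposition~\ref{lasso-rep}, giving $\hat r_\delta^\ast = \by - \bX\hat\beta_\delta^\ast$.

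For part {\em (iii)}, nonnegativity of $\omega_\delta(\beta)$ in \eqref{wolfe_gap} is once more H\"older: for feasible $\beta$, $r^T\bX\beta\le\|\bX^T r\|_\infty\|\beta\|_1\le\delta\|\bX^T r\|_\infty$. The suboptimality bound rests on the algebraic identity, valid whenever $r=\by-\bX\beta$, that $L_n(\beta)+\tfrac{\delta}{n}f_\delta(r) = \tfrac{1}{2n}\|\by\|_2^2 + \tfrac{\delta}{n}\omega_\delta(\beta)$, which I would check by expanding $\tfrac{1}{2n}\|r\|_2^2 + \tfrac{1}{2n}\|r-\by\|_2^2 = \tfrac{1}{2n}\|\by\|_2^2 - \tfrac1n r^T\bX\beta$ using $r-\by = -\bX\beta$. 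Combining this identity with strong duality $L_{n,\delta}^\ast = \tfrac{1}{2n}\|\by\|_2^2 - \tfrac{\delta}{n}f_\delta^\ast$ and with $f_\delta^\ast\le f_\delta(r)$ (since $r\in P_{\mathrm{res}}$ is feasible for $\mathrm{RCM}_\delta$) yields $L_n(\beta) - L_{n,\delta}^\ast \le \tfrac{\delta}{n}\omega_\delta(\beta)$; the final assertion that $\omega_\delta(\beta)=0$ forces \lasso optimality is then immediate.

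The only genuinely delicate point is the minimax exchange in part {\em (ii)}: because $P_{\mathrm{res}}$ is an unbounded affine set, I cannot appeal to compactness on the residual side and must instead rely on compactness of the $\ell_1$-ball $\{\|\beta\|_1\le\delta\}$ when invoking Sion's theorem, and separately on coercivity of $f_\delta$ in $\tilde r$ to guarantee attainment of the outer maximum — attainment being exactly what makes the saddle-point reasoning (and hence the uniqueness claim $\hat r_\delta^\ast = \by - \bX\hat\beta_\delta^\ast$) legitimate. Everything else is routine bookkeeping with H\"older's inequality and the $\omega_\delta$ identity.
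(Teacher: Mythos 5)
Your proposal is correct. Parts (i) and (iii) follow essentially the paper's route: the paper proves the exact identity $L_n(\beta) + \tfrac{\delta}{n}f_\delta(\tilde r) = \tfrac{1}{2n}\|\by\|_2^2 + \tfrac{1}{2n}\|r-\tilde r\|_2^2 + \tfrac{\delta}{n}\bigl(\|\bX^T\tilde r\|_\infty - \tilde r^T\bX\beta/\delta\bigr)$ and discards the two nonnegative remainder terms, which carries the same content as your ``drop the max, then apply H\"older'' argument (the $\tfrac{1}{2n}\|r-\tilde r\|_2^2$ term is precisely the slack you give up by dropping the max); your identity in part (iii) is the specialization $\tilde r = r$ of the paper's, and the remaining bookkeeping with $f_\delta^\ast \le f_\delta(r)$ coincides. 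Where you genuinely diverge is strong duality: the paper does write down the min--max exchange to \emph{construct} $\mathrm{RCM}_\delta$, but to \emph{prove} part (ii) it simply recasts both the \lasso and $\mathrm{RCM}_\delta$ as linearly constrained convex quadratic programs and cites classical strong duality for that class, leaving the uniqueness of $\hat r^\ast_\delta$ essentially to that citation. You instead justify the exchange directly via Sion's theorem --- correctly placing the compactness hypothesis on the $\ell_1$-ball side, since $P_{\mathrm{res}}$ is an unbounded affine set --- use coercivity of $f_\delta$ to secure attainment of the outer maximum, and then extract uniqueness from the saddle-point property together with the fact that $\by - \bX\hat\beta^\ast_\delta$ is the unique maximizer of your $\Phi(\hat\beta^\ast_\delta,\cdot)$. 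Your route is self-contained and in fact supplies more detail on the uniqueness claim than the paper does; the paper's route is shorter but outsources the key step to a standard quadratic-programming duality theorem.
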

\begin{proof}
Let us first construct the problem $\mathrm{RCM}_\delta$ using basic constructs of minmax duality.  As demonstrated in Proposition \ref{lasso-rep}, the least-squares loss function $L_n(\cdot)$ has the following max representation:
\begin{equation*}
L_n(\beta) = \max\limits_{\tilde r \in {P_{\text{res}}}}\left\{-\tilde r^T(\tfrac{1}{n}\bX)\beta - \tfrac{1}{2n}\|\tilde r - \by\|_2^2 + \tfrac{1}{2n}\|\by\|_2^2\right\} \ .
\end{equation*}
Therefore the \lasso problem \eqref{poi-lasso} can be written as
\begin{equation*}
\begin{array}{rcl}
\min\limits_{\beta \in B_\delta}   & \max\limits_{\tilde r \in {P_{\text{res}}}}\left\{-\tilde r^T(\tfrac{1}{n}\bX)\beta - \tfrac{1}{2n}\|\tilde r - \by\|_2^2 + \tfrac{1}{2n}\|\by\|_2^2\right\}\\
\end{array}
\end{equation*} where $B_\delta := \{\beta \in \mathbb{R}^p : \|\beta\|_1 \le \delta \}$.
We construct a dual of the above problem by interchanging the min and max operators above, yielding the following dual optimization problem:
\begin{equation*}
\max_{\tilde r \in P_{\mathrm{res}}} \ \ \min_{\beta \in B_\delta}\left\{-\tilde r^T(\tfrac{1}{n}\bX)\beta - \tfrac{1}{2n}\|\tilde r - \by\|_2^2 + \tfrac{1}{2n}\|\by\|_2^2\right\} \ .
\end{equation*}
After negating, and dropping the constant term $\tfrac{1}{2n}\|\by\|_2^2$, the above dual problem is equivalent to:
\begin{equation}\label{ben_folds}
\min_{\tilde r \in P_{\mathrm{res}}} \ \ \max_{\beta \in B_\delta}\left\{\tilde r^T(\tfrac{1}{n}\bX)\beta\right\} + \tfrac{1}{2n}\|\tilde r - \by\|_2^2 \ .
\end{equation}
Now notice that
\begin{equation}\label{matt_kim}
 \max_{\beta \in B_\delta}\left\{\tilde r^T(\tfrac{1}{n}\bX)\beta\right\} = \tfrac{\delta}{n}\left( \max_{j \in \{1, \ldots, p\}}|\tilde r^T\bX_j| \right)=\tfrac{\delta}{n}\|\bX^T\tilde r\|_\infty \ ,
\end{equation} from which it follows after scaling by $\frac{n}{\delta}$ that \eqref{ben_folds} is equivalent to \eqref{FS-dual-prob}.

Let us now prove item (i).  Let $\beta$ be feasible for the \lasso problem \eqref{poi-lasso} and $\tilde r$ be feasible for the regularized correlation minimization problem $\mathrm{RCM}_\delta$ \eqref{FS-dual-prob}, and let $r = \by - \bX\beta$ and let $\tilde \beta$ be such that $\tilde r = \by - \bX\tilde \beta$.  Then direct arithmetic manipulation yields the following equality:
\begin{equation}\label{weak_arithmetic}
L_n(\beta) + \tfrac{\delta}{n} f_\delta(\tilde r) = \tfrac{1}{2n}\|\by\|_2^2 + \tfrac{1}{2n}\|r-\tilde r\|_2^2 + \tfrac{\delta}{n}\left(\|\bX^T\tilde r\|_\infty - \frac{\tilde r^T\bX\beta}{\delta} \right)  \ ,
\end{equation}
from which the result follows since $\|r-\tilde r\|_2^2 \ge 0$ and $\tilde r^T\bX\beta \le \|\bX^T\tilde r\|_\infty \|\beta\|_1 \le \delta \|\bX^T\tilde r\|_\infty$ which implies that the last term above is also nonnegative.

To prove item (ii), notice that both the \lasso and $\mathrm{RCM}_\delta$ can be re-cast as optimization problems with a convex quadratic objective function and with linear inequality constraints.  That being the case, the classical strong duality results for linearly-constrained convex quadratic optimization apply, see \cite{avriel} for example.

We now prove (iii). Since $\beta$ is feasible for the \lasso problem, it follows from the Holder inequality that $r^T\bX\beta \le \|\bX^Tr\|_\infty \|\beta\|_1 \le \delta \|\bX^Tr\|_\infty$, from which it then follows that $\omega_\delta(\beta) \ge 0$. Invoking \eqref{weak_arithmetic} with $\tilde r \gets r = \by - \bX\beta$ yields:
\begin{equation*}
L_n(\beta) + \tfrac{\delta}{n} f_\delta(r) = \tfrac{1}{2n}\|\by\|_2^2 + \tfrac{\delta}{n}\cdot\omega_\delta(\beta) \ .
\end{equation*}
Combining the above with strong duality (ii) yields:
\begin{equation*}
L_n(\beta) + \tfrac{\delta}{n} f_\delta(r) = L_{n, \delta}^\ast + \tfrac{\delta}{n}f_\delta^\ast + \tfrac{\delta}{n}\cdot\omega_\delta(\beta) \ .
\end{equation*}
After rearranging we have:
\begin{equation*}
L_n(\beta) - L_{n, \delta}^\ast \leq \tfrac{\delta}{n}f_\delta^\ast - \tfrac{\delta}{n} f_\delta(r) + \tfrac{\delta}{n}\cdot\omega_\delta(\beta) \leq \tfrac{\delta}{n}\cdot\omega_\delta(\beta) \ ,
\end{equation*}
where the last inequality follows since $f_\delta^\ast \leq f_\delta(r)$.
\end{proof}

\subsubsection{Proof of Proposition \ref{RFSequiv}}\label{may3}
Recall the update formula for the residuals in $\RFSe$:
\begin{equation}\label{Rresupdate}
\hat r^{k+1} \gets \hat{r}^k - \varepsilon\left[\sgn((\hat{r}^k)^T\bX_{j_k})\bX_{j_k} + \tfrac{1}{\delta}(\hat{r}^k - \by)\right] \ .
\end{equation}
We first show that $g^k := \sgn((\hat{r}^k)^T\bX_{j_k})\bX_{j_k} + \tfrac{1}{\delta}(\hat{r}^k - \by)$ is a subgradient of $f_\delta(\cdot)$ at $\hat{r}^k$. Recalling the proof of Proposition \ref{FSequiv}, we have that $\sgn((\hat{r}^k)^T\bX_{j_k})\bX_{j_k}$ is a subgradient of $f(r) := \|\bX^Tr\|_\infty$ at $\hat r^k$ since $j_k \in \argmax_{j \in \{1, \ldots, p\}}|(\hat{r}^k)^T\bX_j|$. Therefore, since $f_\delta(r) = f(r) + \tfrac{1}{2\delta}\|r-\by\|_2^2$, it follows from the additive property of subgradients (and gradients) that $g^k = \sgn((\hat{r}^k)^T\bX_{j_k})\bX_{j_k} + \tfrac{1}{\delta}(\hat{r}^k - \by)$ is a subgradient of $f_\delta(r)$ at $r=\hat{r}^k$. Therefore the update \eqref{Rresupdate} is of the form $\hat{r}^{k+1} = \hat{r}^k - \varepsilon g^k$ where $g^k \in \partial f_\delta(\hat{r}^k)$. Finally note that $\hat{r}^k - \varepsilon g^k = \hat{r}^{k+1} = \by - \bX\beta^{k+1} \in P_{\mathrm{res}}$, hence $\Pi_{P_{\mathrm{res}}}(\hat{r}^k - \varepsilon g^k) = \hat{r}^k - \varepsilon g^k$, i.e., the projection step is superfluous here. Therefore $\hat{r}^{k+1} = \Pi_{P_{\mathrm{res}}}(\hat{r}^k - \varepsilon g^k)$, which shows that \eqref{Rresupdate} is precisely the update for the subgradient descent method with step-size $\alpha_k := \varepsilon$.
\qed

\subsubsection{Proof of Theorem \ref{RFSe-guarantees}}\label{migraine2}
Let us first use induction to demonstrate that the following inequality holds:
\begin{equation}\label{geo_shrinkage}
\| \hat{\beta}^k \|_1 \le \varepsilon\sum_{j = 0}^{k-1}\left(1 - \tfrac{\varepsilon}{\delta}\right)^j \ \ \ \mathrm{for~all~} k \ge 0 \ .
\end{equation}
Clearly, \eqref{geo_shrinkage} holds for $k = 0$ since $\hat{\beta}^0 = 0$. Assuming that \eqref{geo_shrinkage} holds for $k$, then the update for $\hat{\beta}^{k+1}$ in step (3.) of algorithm $\RFSe$ can be written as $\hat{\beta}^{k+1} = (1-\tfrac{\varepsilon}{\delta})\hat{\beta}^k + \varepsilon \cdot \sgn((\hat{r}^k)^T\bX_{j_k})e_{j_k}$, from which it holds that
\begin{align*}
\|\hat{\beta}^{k+1} \|_1 &=  \|(1-\tfrac{\varepsilon}{\delta})\hat{\beta}^k + \varepsilon \cdot \sgn((\hat{r}^k)^T\bX_{j_k})e_{j_k} \|_1 \\ \\
&\leq (1-\tfrac{\varepsilon}{\delta})\|\hat{\beta}^k\|_1  + \varepsilon\| e_{j_k} \|_1 \\ \\
&\le (1-\tfrac{\varepsilon}{\delta})\varepsilon\sum_{j = 0}^{k-1}\left(1 - \tfrac{\varepsilon}{\delta}\right)^j \ + \ \varepsilon \\
&= \varepsilon\sum_{j = 0}^{k}\left(1 - \tfrac{\varepsilon}{\delta}\right)^j \ ,
\end{align*}
which completes the induction. Now note that \eqref{geo_shrinkage} is a geometric series and we have:
\begin{equation}\label{geo_shrinkage2}
\| \hat{\beta}^k \|_1 \le~ \varepsilon\sum_{j = 0}^{k-1}\left(1 - \tfrac{\varepsilon}{\delta}\right)^j =~ \delta\left[1 - \left(1 - \tfrac{\varepsilon}{\delta}\right)^k\right] \leq~ \delta \ \ \ \mathrm{for~all~} k \ge 0 \ .
\end{equation}

Recall that we developed the algorithm $\RFSe$ in such a way that it corresponds exactly to an instantiation of the subgradient descent method applied to the RCM problem \eqref{FS-dual-prob}.  Indeed, the update rule for the residuals given in Step (3.) of $\RFSe$ is:  $\hat r^{k+1} \gets \hat{r}^k - \varepsilon g^k$ where $g^k = \left[\sgn((\hat{r}^k)^T\bX_{j_k})\bX_{j_k} + \tfrac{1}{\delta}(\hat{r}^k - \by)\right]$.  We therefore can apply Proposition \ref{simpleprop}, and more specifically the inequality \eqref{waytoomuchsnow}.  In order to do so we need to translate the terms of Proposition \ref{simpleprop} to our setting:  here the variables $x$ are now the residuals $r$, the iterates $x^i$ are now the iterates $\hat r^i$, etc.  The step-sizes of algorithm $\RFSe$ are fixed at $\varepsilon$, so we have $\alpha_i = \varepsilon$ for all $i \ge 0$.  Setting the value of $x$ in  Proposition \ref{simpleprop} to be least-squares residual value, namely $x = \hat r_{LS}$, the left side of \eqref{waytoomuchsnow} is therefore:
\begin{equation}\label{39steps}\begin{array}{rcl}
\frac{1}{k+1}\sum_{i = 0}^k(g^i)^T(x^i - x) &= & \frac{1}{k+1}\sum_{i = 0}^k \left(\bX\left[\sgn((\hat r^i)^T\bX_{j_i})e_{j_i} - \tfrac{1}{\delta}\hat{\beta}^i\right]\right)^T(\hat r^i - \hat r_{LS}) \\ \\
&=&\frac{1}{k+1}\sum_{i = 0}^k\left(\sgn((\hat r^i)^T\bX_{j_i})\bX_{j_i} - \tfrac{1}{\delta}(\bX\hat{\beta}^i)\right)^T\hat r^i \\ \\
&=&\frac{1}{k+1}\sum_{i = 0}^k\left[ \|\bX^T\hat r^i\|_\infty - \tfrac{1}{\delta}(\hat r^i)^T\bX\hat{\beta}^i\right] \\ \\
&=& \frac{1}{k+1}\sum_{i = 0}^k \omega_\delta(\hat{\beta}^i) \ ,
\end{array}\end{equation} where the second equality uses the fact that $\bX^T\hat r_{LS} = 0$ from \eqref{grad2} and the fourth equality uses the definition of $\omega_\delta(\beta) $ from \eqref{wolfe_gap}.

Let us now evaluate the right side of \eqref{waytoomuchsnow}.  We have $\|x^0 - x\|_2 = \|\hat r^0 - \hat r_{LS}\|_2=  \|\by - (\by - \bX\hat{\beta}_{\text{LS}})\|_2 = \|\bX\hat{\beta}_{\text{LS}}\|_2 $.  Also, it holds that
$$\|g^i\|_2 = \|\sgn((\hat r^i)^T\bX_{j_i})\bX_{j_i} - \tfrac{1}{\delta}(\bX\hat{\beta}^i)\|_2 \leq \|\bX_{j_i}\|_2 + \|\bX(\tfrac{\hat{\beta}^i}{\delta})\|_2 \leq 1 + \tfrac{1}{\delta}\|\bX\|_{1,2}\|\hat{\beta}^i\|_1 \le 1 + \|\bX\|_{1,2} \le 2 \ , $$where the third inequality follows since $\|\hat{\beta}^i\|_1 \leq \delta$ from \eqref{geo_shrinkage2} and the second and fourth inequalities follow from the assumption that the columns of $\bX$ have been normalized to have unit $\ell_2$ norm.  Therefore $G=2$ is a uniform bound on $\|g^i\|_2$.  Combining the above, inequality \eqref{waytoomuchsnow} implies that after running $\RFSe$ for $k$ iterations, it holds that:
\begin{equation}\label{lasso_gap_bound}
\min_{i \in \{0, \ldots, k\}}\omega_\delta(\hat{\beta}^i) ~\leq~ \frac{1}{k+1}\sum_{i = 0}^k \omega_\delta(\hat{\beta}^i) ~\leq~ \frac{\|\bX\hat{\beta}_{\text{LS}}\|_2^2}{2(k+1)\varepsilon} + \frac{2^2\varepsilon}{2}   ~=~ \frac{\|\bX\hat{\beta}_{\text{LS}}\|_2^2}{2\varepsilon(k+1)} + 2\varepsilon \ ,
\end{equation}where the first inequality is elementary arithmetic and the second inequality is the application of \eqref{waytoomuchsnow}.  Now let $i$ be the index obtaining the minimum in the left-most side of the above.  Then it follows from part (iii) of Proposition \ref{LASSO-dual} that
\begin{equation}\label{slopes} L_n(\hat{\beta}^i) - L_{n, \delta}^\ast ~\leq~ \tfrac{\delta}{n}\cdot\omega_\delta(\hat{\beta}^i) \le \frac{\delta\|\bX\hat{\beta}_{\text{LS}}\|_2^2}{2n\varepsilon(k+1)} + \frac{2\delta\varepsilon}{n} \ ,  \end{equation}which proves item (i) of the theorem.

To prove item (ii), note first that if $\hat{\beta}^\ast_\delta$ is a solution of the \lasso problem \eqref{poi-lasso}, then it holds that $\|\hat{\beta}^\ast_\delta\|_1 \le \delta$ (feasibility) and $\omega_\delta(\hat{\beta}^\ast_\delta)=0$ (optimality).  This latter condition follows easily from the optimality conditions of linearly constrained convex quadratic problems, see \cite{avriel} for example.  Setting $\hat r^\ast_\delta = \by - \bX\hat{\beta}^\ast_\delta$, the following holds true:
$$\begin{array}{rcl}
\|\bX\hat{\beta}^i - \bX\hat{\beta}^\ast_\delta\|_2^2 &=& 2n\left( L_n(\hat{\beta}^i) - L_n(\hat{\beta}^\ast_\delta) + (\hat r^\ast_\delta)^T\bX(\hat{\beta}^i - \hat{\beta}^\ast_\delta)\right) \\ \\
&=& 2n\left( L_n(\hat{\beta}^i) - L_{n,\delta}^* -\delta \|\bX^T \hat r^\ast_\delta\|_\infty + (\hat r^\ast_\delta)^T\bX\hat{\beta}^i \right) \\ \\
&\le& 2n\left( L_n(\hat{\beta}^i) - L_{n,\delta}^* -\delta \|\bX^T \hat r^\ast_\delta\|_\infty + \|\bX^T \hat r^\ast_\delta\|_\infty \|\hat{\beta}^i\|_1 \right)  \\ \\
&\le& 2n\left( L_n(\hat{\beta}^i) - L_{n,\delta}^* -\delta \|\bX^T \hat r^\ast_\delta\|_\infty + \delta \|\bX^T \hat r^\ast_\delta\|_\infty  \right)  \\ \\
&=& 2n\left( L_n(\hat{\beta}^i) - L_{n,\delta}^* \right) \\ \\
&\le& \frac{\delta\|\bX\hat{\beta}_{\text{LS}}\|_2^2}{\varepsilon(k+1)} + 4\delta\varepsilon \ ,
\end{array}$$where the first equality is from direct arithmetic substitution, the second equality uses the fact that $\omega_\delta(\hat{\beta}^\ast_\delta)=0$ whereby $(\hat r^\ast_\delta)^T\bX\hat{\beta}^\ast_\delta =  \delta \|\bX^T \hat r^\ast_\delta\|_\infty$, the first inequality follows by applying Holder's inequality to the last term of the second equality, and the final inequality is an application of \eqref{slopes}.  Item (ii) then follows by taking square roots of the above.

Item (iii) is essentially just \eqref{geo_shrinkage2}. Indeed, since $i \leq k$ we have:
\begin{equation*}
\| \hat{\beta}^i \|_1 \le~ \varepsilon\sum_{j = 0}^{i-1}\left(1 - \tfrac{\varepsilon}{\delta}\right)^j \leq~ \varepsilon\sum_{j = 0}^{k-1}\left(1 - \tfrac{\varepsilon}{\delta}\right)^j =~ \delta\left[1 - \left(1 - \tfrac{\varepsilon}{\delta}\right)^k\right] \leq~ \delta \ .
\end{equation*}
(Note that we emphasize the dependence on $k$ rather than $i$ in the above since we have direct control over the number of boosting iterations $k$.) Item (iv) of the theorem is just a restatement of the sparsity property of $\RFSe$.\qed

\subsubsection{Regularized Boosting: Related Work and Context} \label{related-work-lasso-boost}

As we have already seen, the $\FSe$ algorithm leads to models that have curious similarities with the \lasso coefficient profile, but in general the profiles are different.
Sufficient conditions under which the coefficient profiles of $\FSe$ (for $\varepsilon \approx 0$) and \lasso are equivalent have been explored in~\cite{hastie06:_forwar}.
A related research question is whether there are
structurally similar algorithmic variants of $\FSe$ that lead to \lasso solutions for arbitrary datasets? In this vein~\cite{zhao2007stagewise} propose \blassoperiod, a corrective version of the forward stagewise algorithm. \blassoperiod, in addition to taking incremental forward steps (as in $\FSe$), also takes backward steps, the result of which is that the algorithm approximates the \lasso coefficient profile under certain assumptions on the data.
 The authors observe that \blasso often leads to models that are sparser and have better predictive accuracy than those produced by $\FSe$.

In \cite{buhlmann2006sparse}, the authors point out that models delivered by boosting methods need not be adequately sparse, and they highlight the importance of obtaining models that have more sparsity, better prediction accuracy, and better variable selection properties.
They propose a sparse variant of $L2$-\textsc{Boost} (see also Section~\ref{sect_intro}) which considers a regularized version of the
squared error loss, penalizing the approximate degrees of freedom of the model.

In \cite{friedman2003importance}, the authors also point out that boosting algorithms often lead to a large collection of nonzero coefficients. They suggest reducing the complexity of the model by some form of
``post-processing" technique---one such proposal is to apply a \lasso regularization on the selected set of coefficients.

A parallel line of work in machine learning~\cite{duchi2009boosting} explores the scope of boosting-like algorithms on $\ell_{1}$-regularized versions of different loss functions arising mainly in the context of classification problems.
The proposal of~\cite{duchi2009boosting}, when adapted to the least squares regression problem with $\ell_{1}$-regularization penalty, leads to the following optimization problem:
\begin{equation}\label{lasso-lag-1}
\min_\beta\;\;  \tfrac{1}{2n} \| \by - \bX \beta \|_{2}^2 + \lambda \| \beta \|_{1} \ ,
\end{equation}
for which the authors~\cite{duchi2009boosting} employ greedy coordinate descent methods. Like the boosting algorithms considered herein, at each iteration the algorithm studied by \cite{duchi2009boosting} selects a certain coefficient $\beta_{j_k}$ to update, leaving all other coefficients $\beta_i$ unchanged. The amount with which to update the coefficient $\beta_{j_k}$ is determined by fully optimizing the loss function \eqref{lasso-lag-1} with respect to $\beta_{j_k}$, again holding all other coefficients constant (note that one recovers \textsc{LS-Boost}$(1)$ if $\lambda = 0$). This way of updating $\beta_{j_k}$ leads to a simple soft-thresholding operation~\cite{donoho1995wavelet} and is \emph{structurally} different from forward stagewise update rules. In contrast, the boosting algorithm $\RFSe$ that we propose here is based on subgradient descent on the dual of the \lasso problem~\eqref{poi-lasso}, i.e., problem~\eqref{FS-dual-prob}.

\subsubsection{Connecting $\RFSe$ to the Frank-Wolfe method}\label{FW_appendix} Although we developed and analyzed $\RFSe$ from the perspective of subgradient descent, one can also interpret $\RFSe$ as the Frank-Wolfe algorithm in convex optimization \cite{frank-wolfe, jaggi2013revisiting, GF-FW} applied to the \lasso \eqref{poi-lasso}. This secondary interpretation can be derived directly from the structure of the updates in $\RFSe$ or as a special case of a more general primal-dual equivalence between subgradient descent and Frank-Wolfe developed in \cite{bach2012duality}. We choose here to focus on the subgradient descent interpretation since it provides a natural unifying framework for a general class of boosting algorithms (including $\FSe$ and $\RFSe$) via a single algorithm applied to a parametric class of objective functions. Other authors have commented on the similarities between boosting algorithms and the Frank-Wolfe method, see for instance \cite{clarkson} and \cite{jaggi2013revisiting}.

\subsection{Additional Details for Section~5}

\subsubsection{Proof of Theorem \ref{PATHe-guarantees}}\label{redfolder}
We first prove the feasibility of $\hat{\beta}^k$ for the \lasso problem with parameter $\bar\delta_k$.  We do so by induction.  The feasibility of $\hat{\beta}^k$ is obviously true for $k = 0$ since $\hat{\beta}^0 = 0$ and hence $\|\hat\beta^0\|_1 = 0 < \bar\delta_0$.  Now suppose it is true for some iteration $k$, i.e., $\|\hat{\beta}^k\|_1 \le \bar\delta_k$.  Then the update for $\hat{\beta}^{k+1}$ in step (3.) of algorithm $\PATHe$ can be written as $\hat{\beta}^{k+1} = (1-\tfrac{\varepsilon}{\bar\delta_k})\hat{\beta}^k + \tfrac{\varepsilon}{\bar\delta_k}(\bar\delta_k \sgn((\hat{r}^k)^T\bX_{j_k})e_{j_k})$, from which it follows that
\begin{align*}
\|\hat{\beta}^{k+1} \|_1 ~&=~  \|(1-\tfrac{\varepsilon}{\bar\delta_k})\hat{\beta}^k + \tfrac{\varepsilon}{\bar\delta_k}(\bar\delta_k \sgn((\hat{r}^k)^T\bX_{j_k})e_{j_k}) \|_1 \\
&\le (1-\tfrac{\varepsilon}{\bar\delta_k})\|\hat{\beta}^k\|_1  + \tfrac{\varepsilon}{\bar\delta_k}\|\bar\delta_k e_{j_k} \|_1 \le (1-\tfrac{\varepsilon}{\bar\delta_k})\bar\delta_k  + \tfrac{\varepsilon}{\bar\delta_k}\bar\delta_k  = \bar\delta_k \leq \bar\delta_{k+1} \ ,
\end{align*}
which completes the induction.

We now prove the bound on the average training error in part {\em (i)}.  In fact, we will prove something stronger than this bound, namely we will prove:
\begin{equation}\label{righthere} \displaystyle\frac{1}{k+1}\sum_{i=0}^k\frac{1}{\bar\delta_i}\left(L_n(\hat{\beta}^i) - L_{n, \bar\delta_i}^\ast\right) ~\leq~ \frac{\|\bX\hat{\beta}_{\text{LS}}\|_2^2}{2n\varepsilon(k+1)} + \frac{2\varepsilon}{n} \ ,
 \end{equation} from which average training error bound of part {\em (i)} follows since $\bar\delta_i \le \bar\delta$ for all $i$.  The update rule for the residuals given in Step (3.) of $\RFSe$ is:  $\hat r^{k+1} \gets \hat{r}^k - \varepsilon g^k$ where $g^k = \left[\sgn((\hat{r}^k)^T\bX_{j_k})\bX_{j_k} + \tfrac{1}{\bar\delta_k}(\hat{r}^k - \by)\right]$.  This update rule is precisely in the format of an elementary sequence process, see Appendix \ref{simple}, and we therefore can apply Proposition \ref{simpleprop}, and more specifically the inequality \eqref{waytoomuchsnow}.  Similar in structure to the proof of Theorem \ref{RFSe-guarantees}, we first need to translate the terms of Proposition \ref{simpleprop} to our setting:  once again the variables $x$ are now the residuals $r$, the iterates $x^i$ are now the iterates $\hat r^i$, etc.  The step-sizes of algorithm $\PATHe$ are fixed at $\varepsilon$, so we have $\alpha_i = \varepsilon$ for all $i \ge 0$.  Setting the value of $x$ in  Proposition \ref{simpleprop} to be least-squares residual value, namely $x = \hat r_{LS}$, and using the exact same logic as in the equations \eqref{39steps}, one obtains the following result about the left side of \eqref{waytoomuchsnow}:
$$\frac{1}{k+1}\sum_{i = 0}^k(g^i)^T(x^i - x) = \frac{1}{k+1}\sum_{i = 0}^k \omega_{\bar\delta_i} (\hat{\beta}^i) \ . $$
Let us now evaluate the right side of \eqref{waytoomuchsnow}.  We have $\|x^0 - x\|_2 = \|\hat r^0 - \hat r_{LS}\|_2=  \|\by - (\by - \bX\hat{\beta}_{\text{LS}})\|_2 = \|\bX\hat{\beta}_{\text{LS}}\|_2 $.  Also, it holds that
$$\|g^i\|_2 = \|\sgn((\hat r^i)^T\bX_{j_i})\bX_{j_i} - \tfrac{1}{\bar\delta_i}(\bX\hat{\beta}^i)\|_2 \leq \|\bX_{j_i}\|_2 + \|\bX(\tfrac{\hat{\beta}^i}{\bar\delta_i})\|_2 \leq 1 + \tfrac{1}{\bar\delta_i}\|\bX\|_{1,2}\|\hat{\beta}^i\|_1 \le 1 + \|\bX\|_{1,2} \le 2 \ , $$where the third inequality follows since $\|\hat{\beta}^i\|_1 \leq \bar\delta_i$ from the feasibility of $\hat\beta^i$ for the \lasso problem with parameter $\bar\delta_i$ proven at the outset, and the second and fourth inequalities follow from the assumption that the columns of $\bX$ have been normalized to have unit $\ell_2$ norm.  Therefore $G=2$ is a uniform bound on $\|g^i\|_2$.  Combining the above, inequality \eqref{waytoomuchsnow} implies that after running $\PATHe$ for $k$ iterations, it holds that:
\begin{equation}\label{lasso_gap_boundtwo}
\frac{1}{k+1}\sum_{i = 0}^k \omega_{\bar\delta_i}(\hat{\beta}^i) ~\leq~ \frac{\|\bX\hat{\beta}_{\text{LS}}\|_2^2}{2(k+1)\varepsilon} + \frac{2^2\varepsilon}{2}   ~=~ \frac{\|\bX\hat{\beta}_{\text{LS}}\|_2^2}{2\varepsilon(k+1)} + 2\varepsilon \ ,
\end{equation}where the inequality is the application of \eqref{waytoomuchsnow}.  From Proposition \ref{LASSO-dual} we have $L_n(\hat{\beta}^i) - L_{n, \bar\delta_i}^\ast ~\leq~ \tfrac{\bar\delta_i}{n}\cdot\omega_{\bar\delta_i}(\hat{\beta}^i)$, which combines with \eqref{lasso_gap_boundtwo} to yield:
$$\displaystyle\frac{1}{k+1}\sum_{i=0}^k\frac{1}{\bar\delta_i}\left(L_n(\hat{\beta}^i) - L_{n, \bar\delta_i}^\ast\right) \ \le \ \displaystyle\frac{1}{(k+1)}\frac{1}{n}\sum_{i=0}^k\omega_{\bar\delta_i}(\hat{\beta}^i) \ \le \ \frac{\|\bX\hat{\beta}_{\text{LS}}\|_2^2}{2n\varepsilon(k+1)} + \frac{2\varepsilon}{n} \ . $$
This proves \eqref{righthere} which then completes the proof of part {\em (i)} through the bounds $\bar\delta_i \le \bar\delta$ for all $i$.

Part {\em (ii)} is a restatement of the feasibility of $\hat{\beta}^k$ for the \lasso problem with parameter $\bar\delta_k$ which was proved at the outset, and is re-written to be consistent with the format and for comparison with Theorem \ref{RFSe-guarantees}  Last of all, part {\em (iii)} follows since at each iteration at most one new coefficient is introduced at a non-zero level.  \qed

\section{Additional Details on the Experiments}\label{sec:append:comp-details}

\setcounter{table}{0}
\renewcommand{\thetable}{B.\arabic{table}}

We describe here some additional details pertaining to the computational results performed in this paper. We first describe in some more detail the real datasets that have been considered in the paper.

\paragraph{Description of datasets considered}

\ \\
We considered four different publicly available microarray datasets as described below.

\paragraph{Leukemia dataset}
This dataset, taken from~\cite{dettling2003boosting}, has binary response with continuous covariates, with $72$ samples and approximately $3500$ covariates.
We further processed the dataset by taking a subsample of $p=500$ covariates, while retaining all $n = 72$ sample points. We artificially generated the response
$\by$ via a linear model with the given covariates $\bX$ (as described in Eg-A in Section \ref{1999}). The true regression coefficient $\beta^{\text{pop}}$ was taken as
$\beta^{\text{pop}}_{i} =1$ for all $i \leq 10$ and zero otherwise.

\paragraph{Golub dataset}
The original dataset was taken from the R package {\texttt{mpm}}, which had 73 samples with approximately 5000 covariates. We reduced this to $p = 500$ covariates (all samples were retained).
Responses $\by$ were generated via a linear model with $\beta^{\text{pop}}$ as above.

\paragraph{Khan dataset}
This dataset was taken from the dataset webpage~\url{http://statweb.stanford.edu/~tibs/ElemStatLearn/datasets/} accompanying the book~\cite{ESLBook}. The
original covariate matrix ({\texttt{khan.xtest}}), which had 73 samples with approximately 5000 covariates, was reduced to $p=500$ covariates (all samples were retained).
Responses $\by$ were generated via a linear model with $\beta^{\text{pop}}$ as above.

\paragraph{Prostate cancer dataset}
This dataset appears in~\cite{LARS} and is available from the R package {\texttt{LARS}}.
The first column {\texttt{lcavol}} was taken as the response (no artificial response was created here).
We generated multiple datasets from this dataset, as follows:
\begin{itemize}
\item[(a)] One of the datasets is the original one with $n=97$ and $p=8$.

\item[(b)] We created another dataset, with $n=97$ and $p=44$ by enhancing the covariate space to include second order interactions.

\item[(c)] We created another dataset, with $n=10$ and $p=44$. We subsampled the dataset from (b), which again was enhanced to include second order interactions.

\end{itemize}

Note that in all the examples above we standardized $\bX$ such that the columns have unit $\ell_{2}$ norm, before running the different algorithms studied herein.\medskip

\begin{figure}[h!]
\centering
\scalebox{0.95}[.8]{\begin{tabular}{l c c c}
&\scriptsize{\sf {Leukemia, SNR=1, p=500}} &\scriptsize{\sf{ Leukemia, SNR=3, p=500} } & \scriptsize{ \sf{Khan, SNR=1, p=500 } } \\
&\scriptsize{\sf {\ebs}} &\scriptsize{\sf{\ebs} } & \scriptsize{ \sf{\ebs } } \\
\rotatebox{90}{\sf {\scriptsize {~~~~~~~Train/Test Errors (in relative scale)}}}&\includegraphics[width=0.31\textwidth,height=0.3\textheight,  trim = 1.0cm 1.3cm 1cm 1.9cm, clip = true ]{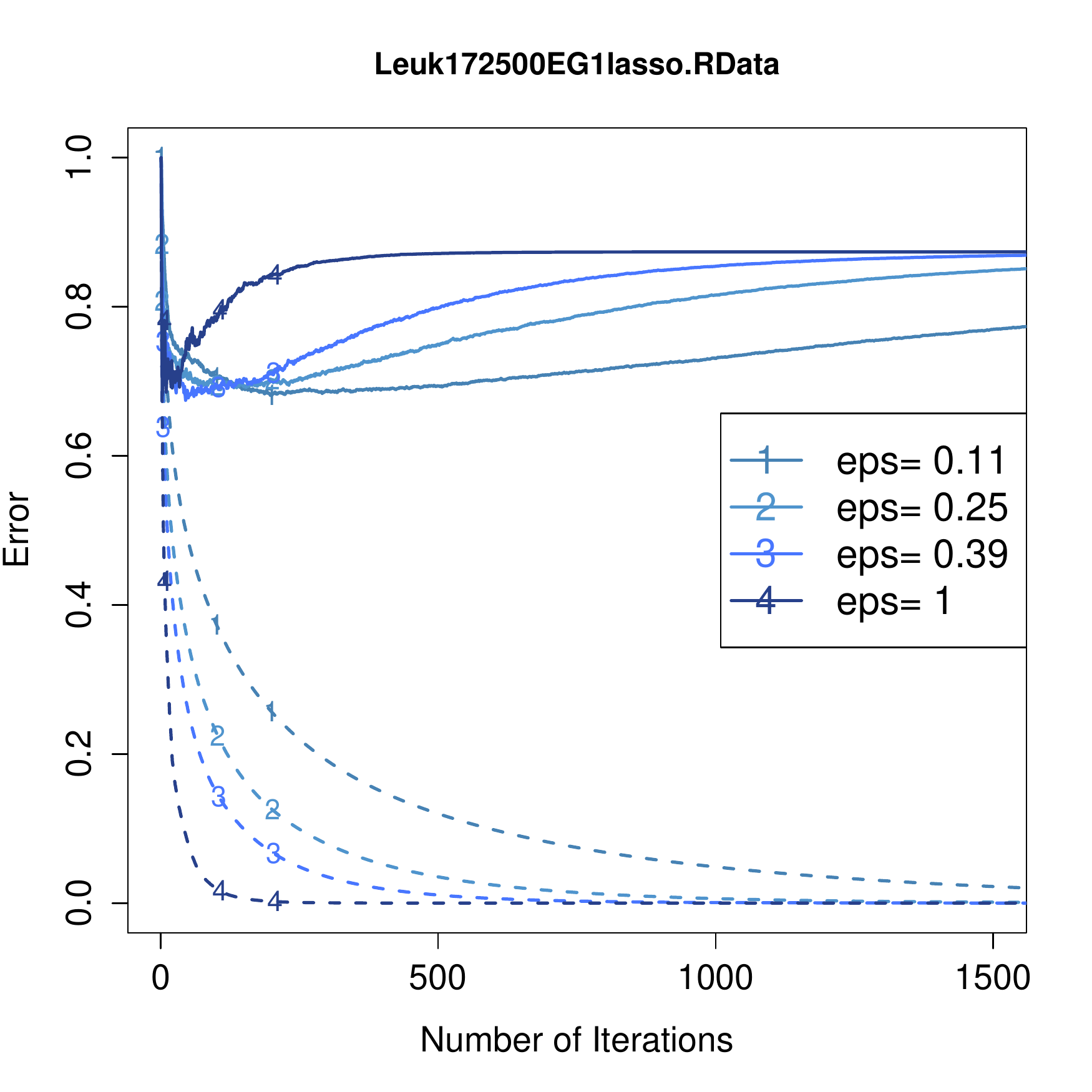}&
\includegraphics[width=0.31\textwidth,height=0.3\textheight,  trim = 1.8cm 1.3cm 1cm 2cm, clip = true ]{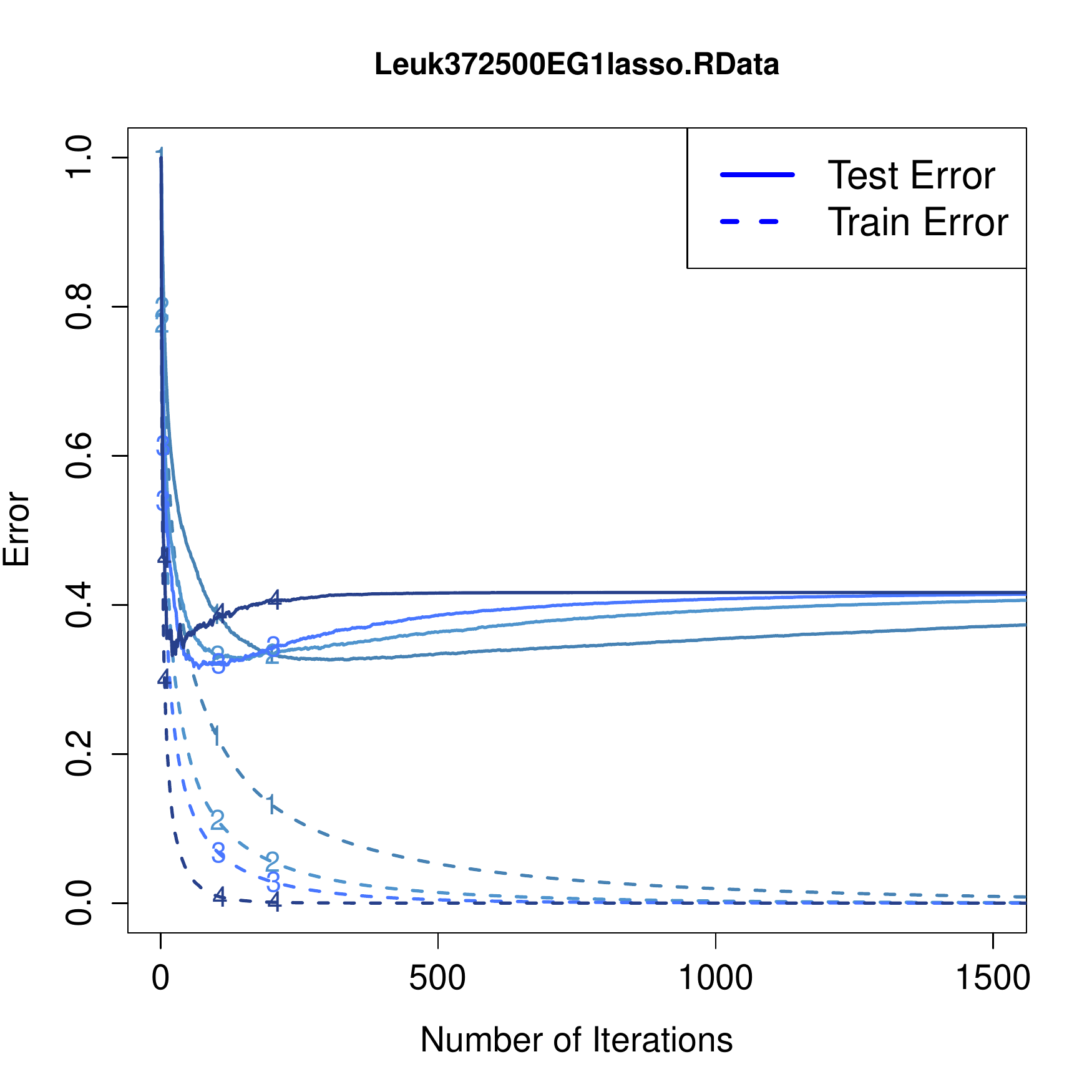}&
\includegraphics[width=0.31\textwidth,height=0.3\textheight,  trim = 1.8cm 1.3cm 1cm 2cm, clip = true ]{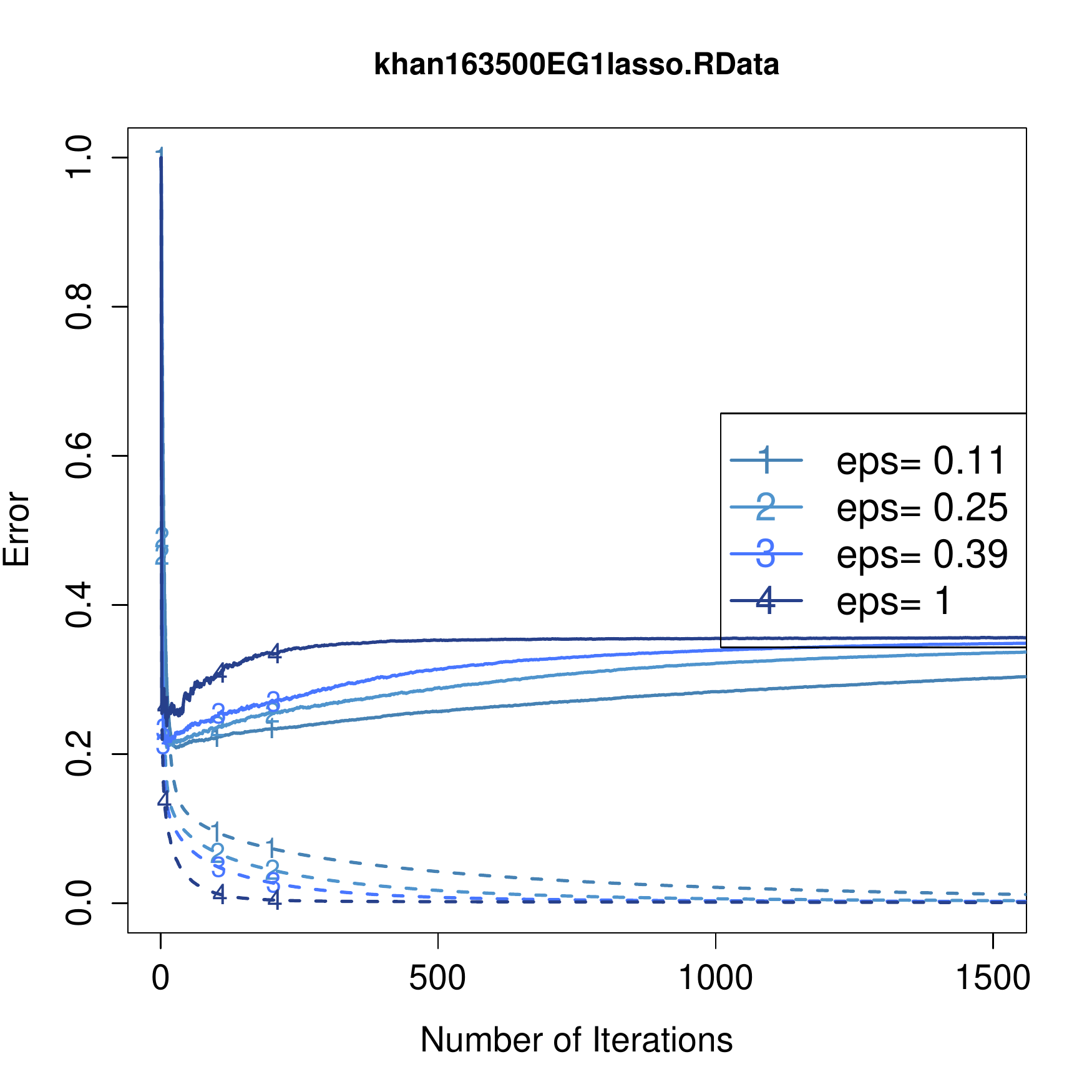} \\
&&&\\
&\scriptsize{\sf {$\FSe$}} &\scriptsize{\sf{$\FSe$} } & \scriptsize{ \sf{$\FSe$ } } \\
\rotatebox{90}{\sf {\scriptsize {~~~~~~~Train/Test Errors (in relative scale)}}}&\includegraphics[width=0.31\textwidth,height=0.3\textheight,  trim = 1.0cm 1.3cm 1cm 1.9cm, clip = true ]{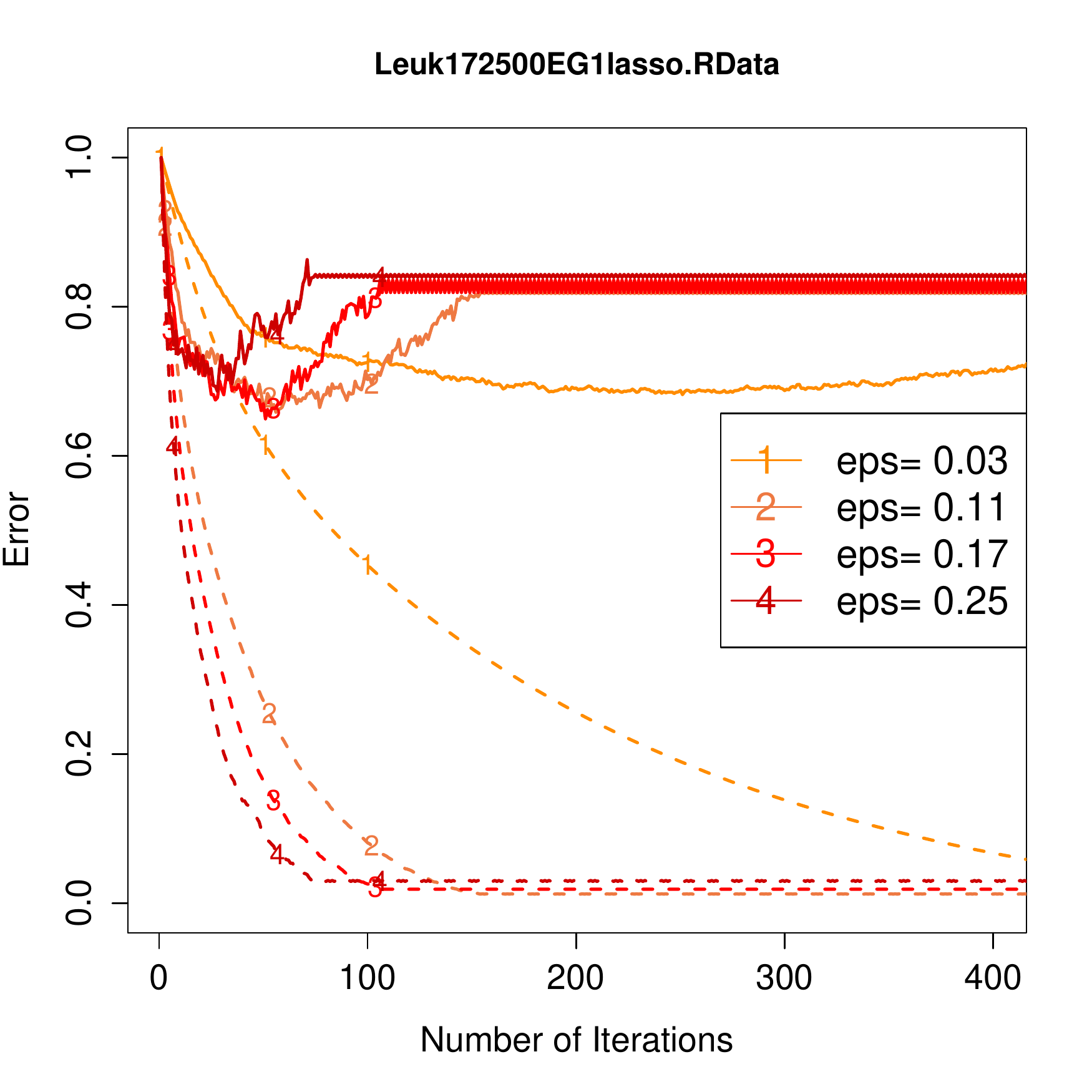}&
\includegraphics[width=0.31\textwidth,height=0.3\textheight,  trim = 1.8cm 1.3cm 1cm 2cm, clip = true ]{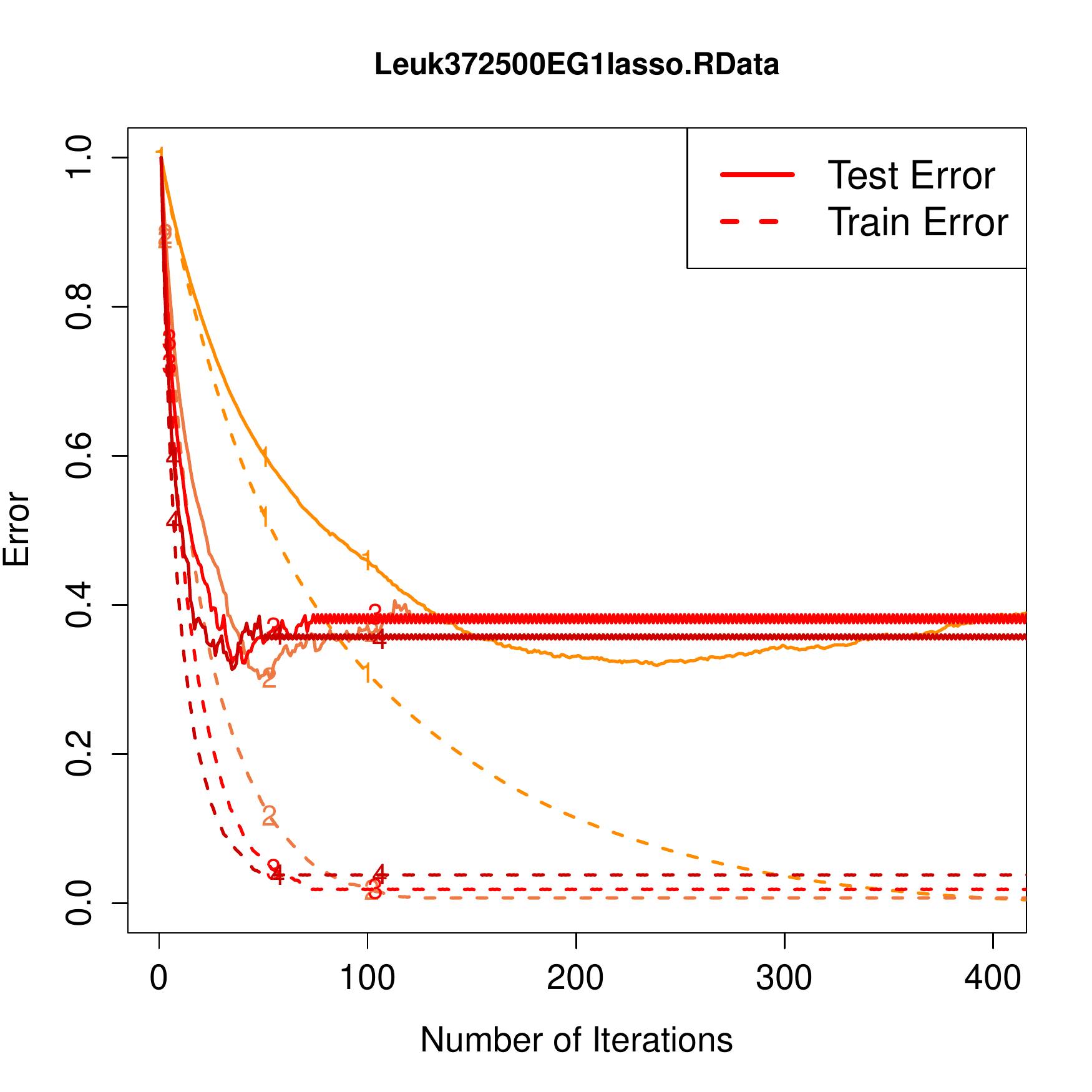}&
\includegraphics[width=0.31\textwidth,height=0.3\textheight,  trim = 1.8cm 1.3cm 1cm 2cm, clip = true ]{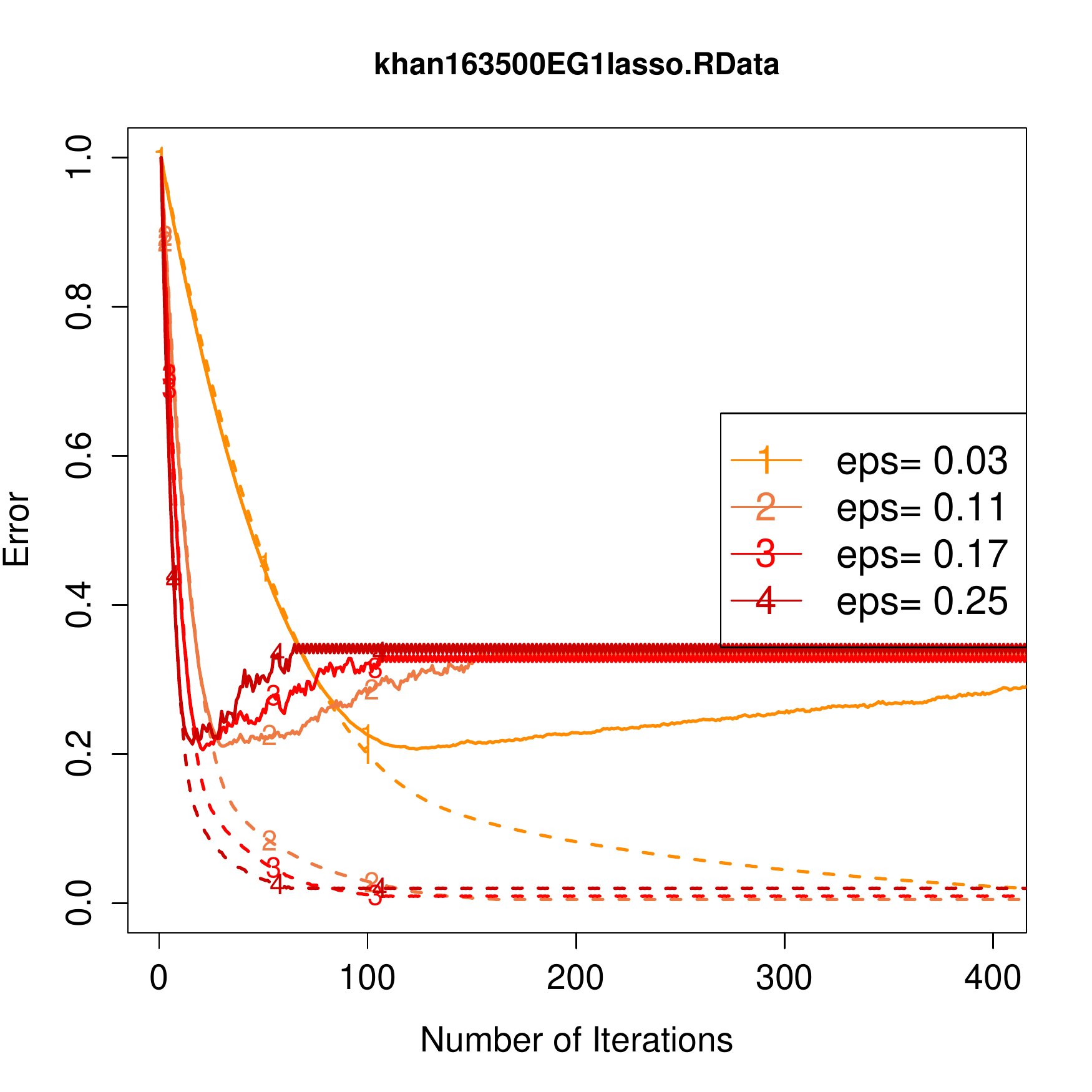} \\
&\scriptsize{\sf {Number of Iterations}} &\scriptsize{\sf {Number of Iterations}}  & \scriptsize{\sf {Number of Iterations}}  \\
\end{tabular}}
\caption{{\small{ Figure showing the training and test errors (in relative scale) as a function of boosting iterations, for both \ebs (top panel) and $\FSe$ (bottom panel).  As the number of iterations increases, the training error shows a global monotone pattern. The test errors however, initially decrease and then start increasing after reaching a minimum. The best test errors obtained are found to be sensitive to the choice of $\varepsilon$. Two different datasets have been considered: the Leukemia dataset (left and middle panels) and the Khan dataset (right panel), as described in Section~\ref{sec:experiments}. }} }\label{fig:train-test-leuk}
\end{figure}

\begin{table}[h!]
\centering
\scalebox{0.85}{\begin{tabular}{c  c c c c c c c }
Dataset  & SNR & n  & \ebs  & $\FSe$ &  $\text{FS}_{0}$ & Stepwise & \lasso \\
     & &  & $\times 10^{-2}$  &  $\times 10^{-2}$  &  $\times 10^{-2}$  &  $\times 10^{-2}$  & $\times 10^{-2}$  \\
  \hline \\
 \multirow{ 3}{*}{\rotatebox[origin=c]{50}{\small {Leukemia}}}  & 1&72 & 65.9525 (1.8221) & 66.7713 (1.8097) & 68.1869 (1.4971) & 74.5487 (2.6439) & 68.3471 (1.584) \\
   & 3&72 & 35.4844 (1.1973) & 35.5704  (0.898) & 35.8385 (0.7165) & 38.9429 (1.8030) & 35.3673 (0.7924) \\
 & 10&72 & 13.5424 (0.4267) & 13.3690 (0.3771) & 13.6298 (0.3945) & 14.8802 (0.4398) & 13.4929 (0.4276) \\ \\
    \multirow{ 3}{*}{\rotatebox[origin=c]{50}{\small {Khan} }}  & 1&63 & 22.3612 (1.1058) & 22.6185 (1.0312) & 22.9128 (1.1209) & 25.2328 (1.0734) & 23.5145 (1.2044) \\
   & 3&63 & 9.3988 (0.4856) & 9.4851 (0.4721) & 9.6571 (0.3813) & 10.8495 (0.3627) & 9.2339 (0.404) \\
 & 10&63 & 3.4061 (0.1272) & 3.4036  (0.1397) & 3.4812 (0.1093) & 3.7986 (0.0914) & 3.1118 (0.1229) \\ \\
    \multirow{ 3}{*}{\rotatebox[origin=c]{50}{\small {Eg-A, $\rho=0.8$}} } & 1  & 50  & 53.1406 (1.5943) & 52.1377 (1.6559) & 53.6286 (1.4464) & 60.3266 (1.9341) & 53.7675 (1.2415) \\
   & 3  & 50 & 29.1960  (1.2555) & 29.2814 (1.0487) & 30.0654 (1.0066) & 33.4318 (0.8780) & 29.8000 (1.2662) \\
  & 10  & 50 & 12.2688 (0.3359) & 12.0845 (0.3668) & 12.6034 (0.5052) & 15.9408 (0.7939) & 12.4262 (0.3660) \\  \\
    \multirow{ 3}{*}{\rotatebox[origin=c]{50}{\small {Eg-A, $\rho=0$}}} & 1 & 50 & 74.1228 (2.1494) & 73.8503 (2.0983) & 75.0705 (2.5759) & 92.8779 (2.7025) & 75.0852  (2.1039)  \\
   & 3  & 50 & 38.1357 (2.7795) & 40.0003 (1.8576) & 41.0643 (1.5503) & 43.9425 (3.9180) & 41.4932  (2.2092) \\
   & 10  & 50 & 14.8867 (0.6994) & 12.9090 (0.5553) & 15.2174 (0.7086) & 12.5502 (0.8256) & 15.0877  (0.7142) \\  \\
\end{tabular}}
\caption{\small{Table showing the prediction errors (in percentages) of different methods:  \ebs, $\FSe$ (both for different values of $\varepsilon$), $\text{FS}_{0}$, (forward) Stepwise regression, and \lassoperiod.
The numbers within parentheses denote standard errors.
 \ebs, $\FSe$ are found to exhibit similar statistical performances as the \lassoperiod, in fact in some examples the boosting methods seem to be marginally better than \lassoperiod.
 The predictive performance of the models were also found to be
 sensitive to the choice of the learning rate $\varepsilon$.
  For  $\text{FS}_{0}$  and Stepwise we used the~{\texttt{R}} package {\texttt{LARS}}~\cite{LARS} to compute the solutions. For all the cases, $p=500$. For Eg-A, we took $n=50$.
Both \ebs and $\FSe$ were run for a few values of $\varepsilon$ in the range $[0.001-0.8]$ -- in all cases,
the optimal models (see the text for details) for \ebs and $\FSe$ were achieved at a value of $\varepsilon$ larger than its limiting version $\varepsilon = 0+$, thereby suggesting the sensitivity of the best predictive model to the learning rate $\varepsilon$.}}
\label{table-sensitive-eps-1}
\end{table}

\paragraph{Sensitivity of the Learning Rate in \ebs and $\FSe$}
We performed several experiments running \ebs and $\FSe$ on an array of real and synthetic datasets, to explore how the training and test errors change as a function of
the number of boosting iterations and the learning rate.  Some of the results appear in Figure~\ref{fig:train-test-leuk}.
The training errors were found to decrease with increasing number of boosting iterations.
The rate of decay, however, is very sensitive to the value of $\varepsilon$, with smaller values of $\varepsilon$
leading to slower convergence behavior to the least squares fit, as expected.
The test errors were found to decrease and then increase after reaching a minimum; furthermore, the best predictive models were found to be sensitive to the choice of
$\varepsilon$.

In addition to the above, we also performed a series of experiments on both real and synthetic datasets comparing the performance of \ebs and $\FSe$ to
other sparse learning methods, namely \lasso, stepwise regression~\cite{LARS} and $\FS_0$~\cite{LARS}.
Our results are presented in
Table~\ref{table-sensitive-eps-1}. In all the cases, we found that the performance of $\FSe$ and \ebs were at least as good as \lassoperiod. And in some cases, the performances of $\FSe$ and \ebs were superior.
The best predictive models achieved by \ebs and $\FSe$ correspond to values of $\varepsilon$ that are larger than zero or even close to one -- this suggests that a proper choice of $\varepsilon$ can lead to superior models.

\paragraph{Statistical properties of  $\RFSe$, \lasso and $\FSe$: an empirical study} We performed some experiments to evaluate the performance of $\RFSe$, in terms of predictive accuracy and
sparsity of the optimal model, versus the more widely known methods $\FSe$ and \lassoperiod. In all the cases, we took a small value of $\varepsilon=10^{-3}$. We ran $\RFSe$ on a grid of twenty $\delta$ values, with the limiting solution corresponding to the \lasso estimate at the particular value of $\delta$ selected. In all cases, we found that when $\delta$ was large, i.e., larger than the best $\delta$ for the \lasso (in terms of obtaining a model with the best predictive performance), $\RFSe$ delivered a model
with excellent statistical properties -- $\RFSe$ led to sparse solutions (the sparsity was similar to that of the best \lasso model) and the predictive performance was as good as, and in some cases better than, the \lasso solution.
This suggests that the choice of $\delta$ does not play a very crucial role in the $\RFSe$ algorithm, once it is chosen to be reasonably large; indeed the number of boosting iterations play a more important role in obtaining good quality statistical
estimates.
When compared to $\FSe$ (i.e., the version of $\RFSe$ with $\delta=\infty$) we observed that the best models delivered by $\RFSe$ were more sparse (i.e., with fewer non-zeros) than the best
$\FSe$ solutions. This complements a popular belief about boosting in that it delivers models that are quite dense -- see the discussion herein in Section~\ref{related-work-lasso-boost}. Furthermore, it shows that the particular form of regularized boosting that we consider, $\RFSe$, does indeed induce sparser solutions.
Our detailed results are presented in Table~\ref{tab:pred-perf-1-rfse}.

\paragraph{Comments on Table~\ref{table-sensitive-eps-1}}
In this experiment, we ran $\FSe$ and \ebs for thirty different values of $\eps$ in the range $0.001$ to $0.8$. The entire regularization paths for the \lasso, $\text{FS}_{0}$, and the more aggressive Stepwise regression were computed with
the~{\texttt{LARS}} package. First, we observe that Stepwise regression, which is quite fast in reaching an unconstrained least squares solution, does not perform well in terms of obtaining a model with good predictive performance.
The slowly learning boosting methods perform quite well -- in fact their performances are quite similar to the best \lasso solutions.
A closer inspection shows that $\FSe$ almost always delivers the best predictive models when $\varepsilon$ is allowed to be flexible. While a good automated method to find the optimal value of $\varepsilon$ is certainly worth investigating, we leave this for future work (of course, there are excellent heuristics for choosing the optimal $\varepsilon$ in practice, such as cross validation, etc.).
However, we do highlight that in practice a strictly non-zero learning rate $\varepsilon$ may lead to better models than its limiting version $\varepsilon = 0 +$.

For Eg-A ($\rho=0.8$), both \ebs and $\FSe$ achieved the best model at $\varepsilon=10^{-3}$. For Eg-A ($\rho=0$), \ebs achieved the best model at $\varepsilon = 0.1, 0.7, 0.8$ and $\FSe$ achieved the best model at $\varepsilon = 10^{-3},0.7,0.8$ (both for SNR values 1, 3, 10 respectively). For the Leukemia dataset, \ebs achieved the best model at $\varepsilon = 0.6,0.7,0.02$ and $\FSe$ achieved the best model at $\varepsilon = 0.6,0.02,0.02$ (both for SNR values 1, 3, 10 respectively). For the Khan dataset, \ebs achieved the best model at $\varepsilon = 0.001,0.001,0.02$ and $\FSe$ achieved the best model at  $\varepsilon = 0.001,0.02,0.001$ (both for SNR values 1, 3, 10 respectively).\medskip

\begin{table}[h!]
\centering
\scalebox{0.99}{\centering{\begin{tabular}{ccc cc ccc}
  \hline
\multicolumn{8}{c}{Real Data Example: Leukemia} \\

 Method    & n & p & SNR & Test Error  & Sparsity & ${\| \hat{\beta}^{\text{opt}} \|_{1}}/{\| \hat{\beta}^{*} \|_{1}}$ &  ${\delta}/{\delta_{\max}}$ \\
  \hline
$\FSe$& 72 & 500 & 1 & 0.3431 (0.0087) & 28 & 0.2339 &- \\
  $\RFSe$& 72 & 500 & 1 & 0.3411 (0.0086) & 25 & 0.1829 & 0.56 \\
  \lasso & 72 & 500 & 1 & 0.3460 (0.0086) & 30 & 1 &  0.11 \\  \\
  $\FSe$& 72 & 500 & 10 & 0.0681 (0.0014) & 67 & 0.7116 &-\\
  $\RFSe$ & 72 & 500 & 10 & 0.0659 (0.0014) & 60 & 0.5323 &  0.56\\
  \lasso & 72 & 500 & 10 & 0.0677 (0.0015) & 61 & 1 & 0.29 \\   \hline \\
  \multicolumn{8}{c}{Synthetic Data Examples: Eg-B (SNR=1)} \\
 Method & n & p & $\rho$ & Test Error  & Sparsity& ${\| \hat{\beta}^{\text{opt}} \|_{1}}/{\| \hat{\beta}^{*} \|_{1}}$ &  ${\delta}/{\delta_{\max}}$ \\  \hline
  $\FSe$& 50 & 500 & 0 & 0.19001 (0.0057) & 56 & 0.9753 &  - \\
  $\RFSe$ & 50 & 500 & 0 & 0.18692 (0.0057) & 51 & 0.5386 &  0.71 \\
  \lasso & 50 & 500 & 0 & 0.19163 (0.0059) & 47 & 1 & 0.38 \\  \\
     $\FSe$& 50 & 500 & 0.5& 0.20902 (0.0057) & 14 & 0.9171 &  - \\
  $\RFSe$ & 50 & 500 & 0.5& 0.20636 (0.0055) & 10 & 0.1505 &  0.46 \\
  \lasso & 50 & 500 & 0.5& 0.21413 (0.0059) & 13 & 1 & 0.07 \\  \\
  $\FSe$& 50 & 500 & 0.9& 0.05581 (0.0015) & 4 & 0.9739 & - \\
  $\RFSe$ & 50 & 500 & 0.9& 0.05507 (0.0015) & 4 & 0.0446 &  0.63 \\
  \lasso & 50 & 500 & 0.9& 0.09137 (0.0025) & 5 & 1 & 0.04 \\  \hline\\
  \end{tabular}}}
  \caption{\small{Table showing the statistical properties of $\RFSe$ as compared to \lasso and $\FSe$. Both $\RFSe$ and $\FSe$ use $\varepsilon=0.001$.
  The model that achieved the best predictive performance (test-error) corresponds to $\hat{\beta}^{\text{opt}}$. The limiting model (as the number of boosting iterations is taken to be infinitely large) for each method is denoted by $\hat{\beta}^{*}$.
  ``Sparsity" denotes the number of coefficients in $\hat{\beta}^{\text{opt}}$ larger than $10^{-5}$ in absolute value. $\delta_{\max}$ is the $\ell_{1}$-norm of the least squares solution
  with minimal $\ell_{1}$-norm.   Both $\RFSe$ and \lasso were run for a few $\delta$ values of the form $\eta \delta_{\max}$, where $\eta$ takes on twenty values in $[0.01,0.8]$.
  For the real data instances, $\RFSe$ and \lasso were run for a maximum of 30,000 iterations, and $\FSe$ was run for 20,000 iterations. For the synthetic examples,
  all methods were run for a maximum of 10,000 iterations. The best models for $\RFSe$ and $\FSe$ were all obtained in the interior of the path. The best models delivered by $\RFSe$ are seen to be more sparse and have better predictive performance than the best models obtained by $\FSe$.
  The performances of \lasso and $\RFSe$ are found to be quite similar, though in some cases
  $\RFSe$ is seen to be at an advantage in terms of better predictive accuracy. }}\label{tab:pred-perf-1-rfse}
\end{table}

\end{document}